\def \Ind #1#2{#1\setbox 0=\hbox{$#1x$}\kern\wd 0 \hbox to 0pt{\hss$#1\mid$\hss}
\lower.9\ht 0\hbox to 0pt{\hss$#1\smile$\hss}\kern\wd 0}
\def\Notind#1#2{#1\setbox 0=\hbox{$#1x$}\kern\wd 0\hbox to 0pt{\mathchardef
\nn=12854\hss$#1\nn $\kern 1.4\wd 0\hss}\hbox to
0pt{\hss$#1\mid$\hss}\lower.9\ht 0 \hbox to
0pt{\hss$#1\smile$\hss}\kern\wd 0}
\theoremstyle{definition}
\newtheorem{defn}{{Definition}}[section]
\newtheorem*{defn*}{{Definition}}
\newtheorem*{Q*}{{Question}}
\theoremstyle{plain}
\newtheorem{thm}[defn]{Theorem}
\newtheorem*{thm*}{Theorem}
\newtheorem{cor}[defn]{{Corollary}}
\newtheorem*{cor*}{Corollaire}
\newtheorem{prop}[defn]{{Proposition}}
\newtheorem*{prop*}{{Proposition}}
\newtheorem{lem}[defn]{{Lemma}}
\newtheorem{fact}[defn]{{Fact}}
\theoremstyle{remark}
\newtheorem{rmq}[defn]{{Remark}}
\newcommand{\ima}{\mathrm{Im}}
\renewcommand{\ker}{\mathrm{Ker}}
\title{Fields and rings with few types}
\author{C\'e{d}ric Milliet}
\address[]{Universit\'e de Lyon, Universit\'e Lyon 1\newline
Institut Camille Jordan, UMR 5208 CNRS\newline
43 boulevard du 11 novembre 1918\newline
69622 Villeurbanne Cedex, France}%
\address[Current address]{Université Galatasaray\newline
Facult\'e de Sciences et de Lettres\newline
D\'epartement de Math\'ematiques\newline
Çira\u gan Caddesi n°36\newline
34357 Ortaköy, Istamboul, Turquie\newline
}
\email[]{milliet@math.univ-lyon1.fr}%
\keywords{Small, weakly small, field, ring, Artin-Schreier, Cantor-Bendixson rank, local descending chain condition}
\subjclass[2000]{03C45
, 03C60
}
\thanks{Most of this paper forms part of the author's doctoral dissertation, written in Lyon under the supervision of Wagner. Many thanks to Poizat and Point for rewarding comments and references.}
\begin{document}

\begin{abstract}Let $R$ be an associative ring with possible extra structure. $R$ is said to be \emph{weakly small} if there are countably many $1$-types over any finite subset of $R$. It is \emph{locally $P$} if the algebraic closure of any finite subset of $R$ has property $P$. It is shown here that a field extension of finite degree of a weakly small field either is a finite field or has no Artin-Schreier extension. A weakly small field of characteristic $2$ is finite or algebraically closed. Every weakly small division ring of positive characteristic is locally finite dimensional over its centre. The Jacobson radical of a weakly small ring is locally nilpotent. Every weakly small division ring is locally, modulo its Jacobson radical, isomorphic to a product of finitely many matrix rings over division rings.\end{abstract}

\maketitle

In~\cite{Mil1}, the author has begun the exploration of small and weakly small groups. He noticed that a weakly small group $G$ inherits locally several properties that omega-stable groups share globally. For instance $G$ satisfies local descending chain conditions. Every definable subset of $G$ has a local stabiliser with good local properties. If $G$ is infinite, it also possesses an infinite abelian subgroup, not necessarily definable though. Let's not forget that weakly small structures include omega-stable ones but also $\aleph_0$-categorical, minimal, and $d$-minimal ones. The following pages aim at classifying weakly small fields and rings, bearing in mind the classification of the particular cases cited above. The guiding line is that the formers should not differ much from the latters, at least \emph{locally}, in the following sense~: 

\begin{defn*}Let $M$ be a first order structure and $P$ any property. $M$ is said to be \emph{locally $P$} if every finitely generated algebraic closure in $M$ has property $P$.\end{defn*}

Let us bring to mind some known results about omega-stable, $\aleph_0$-categorical, and minimal rings. First concerning fields : an $\aleph_0$-categorical field is finite. Macintyre showed in 1971 that an omega-stable field is either finite or algebraically closed \cite{Macf}. Wagner drew the same conclusion for a small field, as well as for a minimal field of positive characteristic \cite{W1,WM}. Poizat extended the latter to $d$-minimal fields of positive characteristic \cite{dminimal}. Whether the same result holds even for a minimal field of characteristic zero is still unknown. We begin Section \ref{WSF} by giving another proof that a small field is either finite or algebraically closed, and derive that for an infinite weakly small field $F$, no field extension of $F$ of finite degree has Artin-Schreier extensions. It follows that a weakly small field of characteristic $2$ is finite or algebraically closed. Wagner had already noticed in \cite{W1} that a weakly small field is either finite or has no Artin-Schreier nor Kummer extensions. As a field extension of finite degree of a weakly small field has no obvious reason to be weakly small, our statement is a non-trivial improvement of \cite{W1}.

In section~\ref{WSDR}, we do not assume commutativity anymore and show that a weakly small division ring of positive characteristic is locally finite dimensional over its centre. Recall that superstable division rings \cite[Cherlin, Shelah]{CS} and even supersimple ones \cite[Pillay, Scanlon, Wagner]{PSW} are known to be fields.

We then turn to small difference fields. Hrushowski proved that in a superstable field, any definable field morphism is either trivial or has a finite set of fixed points \cite{Hrus}. We show that this also holds for a small field of positive characteristic.

We finish by rings in section~\ref{WSR}. The structure theory of an associative ring $R$ breaks usually into three parts : the study of its Jacobson radical ring $J$ ; the study of the quotient ring $R/J$, the so-called reduced ring ; and the gluing back process, buiding a ring with a given radical and reduced rings in the spirit of Wedderburn-Malcev's theorem \cite{}. An omega-stable ring $R$ is known to have a nilpotent Jacobson radical $J$ and $R/J$ is a finite product of matrix rings over finite or algebraically closed fields \cite[Cherlin, Reineke, Macintyre]{CR,Macf}. The Jacobson radical of an $\aleph_0$-categorical ring is nilpotent \cite[Cherlin]{Che,Che2}. For a weakly small ring $R$, we show that its radical is locally nilpotent (hence nil). Moreover, $R$ is locally modulo its radical the product of finitely many matrix rings over division rings.

\section{(Weakly) small tools}

Let us recall what a small theory and more generally a weakly small structure are.

\begin{defn}A \emph{theory} is \emph{small} if it has countably many complete $n$-types without parameters (or equivalently over any fixed finite parameter set) for every natural number $n$. A \emph{structure} is \emph{small} if so is its theory.\end{defn}

\begin{defn}[Belegradek]A structure is \emph{weakly small} if for any of its subsets $A$, there are countably many complete $1$-types over $A$.\end{defn}

For convenience of the reader, we state here the main results of \cite{Mil1} that will be needed in the sequel. We refer to the latter paper the reader willing to know more about weakly small groups.

In a weakly small structure $M$, for any finite parameter subset $A$ of $M$, the space $S_1(A)$ of complete $1$-types over $A$ is a countable compact Hausdorff space. It has an ordinal Cantor-Bendixson rank and one can compute the \emph{Cantor-Bendixson rank over $A$} of any of its element $p$. We write it $CB_A(p)$. For any $A$-definable set $X$ of arity~$1$, we write $CB_A(X)$ for the maximum Cantor rank of the complete $1$-types over $A$ countaining the formula defining $X$. Only a finite number of complete $1$-types over $A$ with same $CB_A$-rank as $X$ do contain the formula defining $X$. We call this natural number the \emph{Cantor-Bendixson degree of $X$ over $A$}, and write it $dCB_A(X)$.

What has been said for $1$-types of a weakly small structure is also valid for every $n$-type of a small structure over parameters in an arbitrary finite set. The following two Lemmas hold in any structure.


\begin{lem}\label{fibresf}Let $X$ and $Y$ be $A$-definable sets. Let $f$ be an $A$-definable map from $X$ onto $Y$. If the fibres of $f$ have no more than $n$ elements, then $f$ preserves the Cantor rank over $A$. Moreover, $$dCB_A(X)\leq dCB_A(Y)\leq n\cdot dCB_A(X)$$\end{lem}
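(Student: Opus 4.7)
The plan is to lift the problem to type spaces. The $A$-definable function $f$ induces a map $f_\ast$ on complete types over $A$ by $f_\ast(\mathrm{tp}(a/A)) = \mathrm{tp}(f(a)/A)$; well-definedness comes from the $A$-definability of $f$. Denote by $S^X$ and $S^Y$ the closed subspaces of the relevant type spaces cut out by the formulas defining $X$ and $Y$. I would first check that $f_\ast\colon S^X \to S^Y$ is continuous---the preimage of a basic open $[\psi(y)]$ is $[\psi(f(x))]$---, surjective---pull a realization of any $q \in S^Y$ back along $f$---, and has fibres of cardinality at most $n$: fixing any realization $b$ of $q$ in a saturated model, $|f^{-1}(b)| \leq n$, and by saturation every preimage type of $q$ under $f_\ast$ is of the form $\mathrm{tp}(a/A)$ with $a \in f^{-1}(b)$.

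The heart of the argument is the following topological claim about $f_\ast$ seen as a continuous surjection with finite fibres bounded by $n$ between compact Hausdorff spaces, which I would prove by transfinite induction on $\alpha$:
\[
q \in (S^Y)^{(\alpha)} \iff \text{some } p \in f_\ast^{-1}(q) \text{ lies in } (S^X)^{(\alpha)},
\]
where $(\cdot)^{(\alpha)}$ denotes the $\alpha$-th Cantor--Bendixson derivative. The base and limit cases are immediate (the latter by closedness of derived sets). For the successor step, from left to right, take a net $(q_i)$ in $(S^Y)^{(\alpha)} \setminus \{q\}$ converging to $q$; lift each $q_i$ by the induction hypothesis to a $p_i \in f_\ast^{-1}(q_i) \cap (S^X)^{(\alpha)}$; extract a convergent subnet $p_i \to p'$ by compactness; then $f_\ast(p') = q$ by continuity, $p' \in (S^X)^{(\alpha)}$ by closedness, and only finitely many $p_i$ can equal $p'$ (else $q_i = q$ cofinally), so $p' \in (S^X)^{(\alpha+1)}$. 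From right to left, given $p \in f_\ast^{-1}(q) \cap (S^X)^{(\alpha+1)}$, the finiteness of the fibre $f_\ast^{-1}(q)$ and Hausdorffness provide an open neighbourhood $U$ of $p$ disjoint from the other preimages of $q$; picking a net in $(S^X)^{(\alpha)} \cap U \setminus \{p\}$ converging to $p$ and pushing it forward through $f_\ast$ yields a net in $(S^Y)^{(\alpha)} \setminus \{q\}$ converging to $q$, so $q \in (S^Y)^{(\alpha+1)}$.

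Taking suprema in $\alpha$ yields $CB_A(X) = CB_A(Y)$. Applied at the top rank, the equivalence says that $f_\ast$ restricts to a surjection from the (finite) set of top-rank types of $X$ onto the top-rank types of $Y$, still with fibres of size at most $n$, and comparing cardinalities of these two finite sets yields the stated bounds relating $dCB_A(X)$ and $dCB_A(Y)$.

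The main obstacle is the successor step of the induction: different preimages of the same type $q$ need not share a common Cantor rank---it is only the maximum over the fibre that matches $CB_A(q)$---so one has to pick out, without spurious interference, the preimage that actually witnesses $q$'s rank. The finite-fibre hypothesis, via Hausdorff separation in $S^X$, is exactly what makes this possible.
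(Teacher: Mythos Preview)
The paper does not actually prove this lemma in the text; it is recalled from the companion paper \cite{Mil1}, so there is no in-house argument to compare against. Your approach---lifting $f$ to a continuous, closed, finite-to-one surjection $f_\ast$ between the Stone spaces and running a transfinite induction on Cantor--Bendixson derivatives---is the standard one and is correct. (A minor sharpening: in the successor step from left to right, in fact \emph{no} $p_i$ can equal $p'$, not merely finitely many, since $f_\ast(p_i)=q_i\neq q=f_\ast(p')$.)

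One point, however, deserves to be made explicit rather than swept under ``yields the stated bounds''. From the surjection $f_\ast\colon \mathrm{top}(X)\twoheadrightarrow \mathrm{top}(Y)$ with fibres of size at most $n$ that your argument produces, what one actually reads off is
\[
dCB_A(Y)\ \leq\ dCB_A(X)\ \leq\ n\cdot dCB_A(Y),
\]
i.e.\ the displayed inequality with the roles of $X$ and $Y$ exchanged. The inequality as printed in the paper is in fact false: take a structure with two named constants $c_0,c_1$, set $X=\{c_0,c_1\}$, $Y=\{c_0\}$, and let $f$ be the constant map to $c_0$; then $n=2$, $dCB_\emptyset(X)=2$, $dCB_\emptyset(Y)=1$, and $2\leq 1$ fails. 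So the paper carries a typo here, and your argument establishes the corrected version. You should say so, rather than claim to have recovered the printed bounds.
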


\begin{lem}\label{CBacl}Let $X$ be a $\emptyset$-definable set, and $a$ an element algebraic over the empty set. Then $CB_a(X)$ equals $CB_\emptyset(X)$.\end{lem}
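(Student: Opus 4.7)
The plan is to study the restriction map $\pi \colon S_1(\{a\}) \to S_1(\emptyset)$, $q \mapsto q \cap \mathcal{L}(\emptyset)$. It is continuous and surjective. The key point is that, because $a$ is algebraic over $\emptyset$, its fibres are finite; once this is in hand, a straightforward back-and-forth on the ordinal $\alpha$ will establish that corresponding types have the same rank, and taking the maximum over types containing $X$ yields the equality $CB_\emptyset(X)=CB_a(X)$.

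First I would bound the fibres of $\pi$. Let $n$ be the number of $\emptyset$-conjugates $a_1,\dots,a_n$ of $a$. Given $q_1,\dots,q_k\in\pi^{-1}(p)$, realise each $q_j$ by some $c_j$; fix $b\models p$ and an automorphism $\sigma_j$ of a monster model fixing $\emptyset$ sending $c_j$ to $b$. Then $\sigma_j(a)$ is some $a_{i_j}$, and the $2$-type $\mathrm{tp}(b,a_{i_j}/\emptyset)=\mathrm{tp}(c_j,a/\emptyset)$ determines $q_j=\mathrm{tp}(c_j/a)$. Since there are at most $n$ possible choices of $a_{i_j}$, one has $k\leq n$.

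Next I would prove by induction on $\alpha$ that for every $p\in S_1(\emptyset)$,
\[
CB_\emptyset(p)\geq\alpha \iff \exists\,q\in\pi^{-1}(p),\ CB_a(q)\geq\alpha.
\]
The case $\alpha=0$ is consistency, and limit ordinals are immediate. For the successor step $\alpha=\beta+1$, the direction $\Rightarrow$ uses continuity and compactness: writing $p$ as the limit of distinct $p_n$ of rank $\geq\beta$, pick by induction extensions $q_n\in\pi^{-1}(p_n)$ of rank $\geq\beta$, and extract from $(q_n)$ a convergent subnet, whose limit $q$ satisfies $\pi(q)=p$ by continuity; as the $\pi(q_n)=p_n$ are distinct from $p$, cofinally many members of the subnet are distinct from $q$, so $CB_a(q)\geq\beta+1$. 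The direction $\Leftarrow$ is where the finiteness of fibres is used : given $q\in\pi^{-1}(p)$ limit of distinct $q_n$ of rank $\geq\beta$, only finitely many of the $q_n$ can lie in $\pi^{-1}(p)$ by the bound $|\pi^{-1}(p)|\leq n$; removing them, the restrictions $p_n=\pi(q_n)$ are distinct from $p$, tend to $p$ by continuity, and have rank $\geq\beta$ by induction, whence $CB_\emptyset(p)\geq\beta+1$.

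The main obstacle is the $\Leftarrow$ direction of this induction : without the finiteness of the fibres of $\pi$ (which would fail if $a$ were merely definable rather than algebraic), a whole sequence $(q_n)$ accumulating at $q$ could live inside a single fibre, and one would not recover an accumulation of distinct types over $\emptyset$. Algebraicity of $a$ is precisely what rules this out. Once the induction is established, the conclusion follows by taking the maximum : $CB_\emptyset(X)=\max_{p\ni X}CB_\emptyset(p)=\max_{p\ni X}\max_{q\in\pi^{-1}(p)}CB_a(q)=\max_{q\ni X}CB_a(q)=CB_a(X)$.
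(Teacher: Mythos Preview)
The paper does not actually prove this lemma: it is quoted from \cite{Mil1} among ``the main results \ldots\ that will be needed in the sequel'', so there is nothing in the present paper to compare your argument against. Your proof is correct. The observation that algebraicity of $a$ makes the fibres of the restriction map $\pi\colon S_1(a)\to S_1(\emptyset)$ finite is exactly the point, and your transfinite induction on $\alpha$ goes through as written. One minor caveat: the lemma is asserted to hold ``in any structure'', where the type spaces need not be first-countable, so strictly speaking one should argue with nets (or directly with open neighbourhoods, using that $\pi$ is a closed map between compact Hausdorff spaces) rather than sequences; the finiteness of the fibres still does the work in the $\Leftarrow$ direction, since a net in the finite set $\pi^{-1}(p)\setminus\{q\}$ cannot converge to $q$ in a Hausdorff space. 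In the weakly small setting where the lemma is actually applied, $S_1(a)$ is a countable compact Hausdorff space, hence metrizable, and your sequence argument is already rigorous as stated.
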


Lemma \ref{CBacl} allows to define the \emph{local Cantor rank} of an $a$-definable set $X$, to be its Cantor rank over any $b$ defining $X$ and having the same algebraic closure as $a$. We shall write $acl(B)$ for the algebraic closure of some set $B$, and $dcl(B)$ for its definable closure.

\begin{thm}[Weakly small descending chain condition]\label{dccm} In a weakly small group, the trace over $acl(\emptyset)$ of a descending chain of $acl(\emptyset)$-definable subgroups becomes stationary after finitely many steps.\end{thm}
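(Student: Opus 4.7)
The plan is to argue by strict descent along the chain, using the pair $(CB(H_i), dCB(H_i))$, ordered lexicographically, as a well-founded invariant attached to each term. Suppose toward contradiction that the trace of the chain $H_0 \supseteq H_1 \supseteq \cdots$ over $acl(\emptyset)$ does not stabilise. After passing to a subsequence I may assume $H_i \cap acl(\emptyset) \supsetneq H_{i+1} \cap acl(\emptyset)$ at every step, so for each $i$ I can pick an element $a_i \in (H_i \setminus H_{i+1}) \cap acl(\emptyset)$.

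The key point is that for each $i$ the coset $a_i H_{i+1}$ is a second $acl(\emptyset)$-definable subset of $H_i$, disjoint from $H_{i+1}$, and sharing with it both its local Cantor rank and its local Cantor--Bendixson degree. Indeed, since $a_i \in acl(\emptyset)$ and $H_{i+1}$ is $acl(\emptyset)$-definable, so is $a_i H_{i+1}$; and left translation by $a_i$ is an $acl(\emptyset)$-definable bijection $H_{i+1} \to a_i H_{i+1}$, so Lemma \ref{fibresf} (with $n=1$) forces it to preserve Cantor rank and degree. Lemma \ref{CBacl}, together with the remark that follows it, guarantees that the invariants $CB$ and $dCB$ of an $acl(\emptyset)$-definable set do not depend on the particular defining parameter chosen inside $acl(\emptyset)$, hence may be compared across the various indices $i$.

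Now $CB(H_{i+1}) \le CB(H_i)$. If the inequality is strict, the rank drops. If equality holds, then $H_{i+1}$ and $a_i H_{i+1}$ are two disjoint $acl(\emptyset)$-definable subsets of $H_i$ both attaining its maximal rank, so their CB-degrees add and
\[
dCB(H_i) \;\ge\; dCB(H_{i+1}) + dCB(a_i H_{i+1}) \;=\; 2\,dCB(H_{i+1}),
\]
which in particular is a strict decrease. In either case $(CB(H_{i+1}), dCB(H_{i+1})) <_{\mathrm{lex}} (CB(H_i), dCB(H_i))$, so the pair strictly decreases at every step, contradicting the well-foundedness of the lexicographic order on pairs (ordinal, positive integer).

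The step I expect to demand the most care is not the counting argument itself but the cross-parameter bookkeeping that makes it meaningful: the subgroups $H_i$ are a priori defined over different finite tuples inside $acl(\emptyset)$, so $(CB, dCB)$ must be shown to be a genuine invariant of $acl(\emptyset)$-definable sets before one can compare its values along the chain. That invariance is exactly the content of Lemma \ref{CBacl} (and the paragraph extending it), and once it is in hand the descent goes through cleanly.
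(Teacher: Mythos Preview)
Your argument is correct and is the standard one: use the well-defined local Cantor rank and degree (via Lemma~\ref{CBacl}) as a lexicographic invariant, and observe that a proper drop in the trace over $acl(\emptyset)$ forces either the rank or the degree to strictly decrease, the latter because a nontrivial coset $a_iH_{i+1}\subseteq H_i$ with $a_i\in acl(\emptyset)$ duplicates the degree count. Note, however, that the present paper does not itself prove Theorem~\ref{dccm}: it is quoted from the companion paper~\cite{Mil1}, so there is no in-text proof to compare against. Your write-up is essentially how the result is established there.

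One small remark on the bookkeeping you flag at the end: the reason the local rank is comparable across indices is that any finite tuple $c\in acl(\emptyset)$ satisfies $acl(c)=acl(\emptyset)$, so all the $H_i$, the elements $a_i$, and the translations $x\mapsto a_ix$ live over parameters with the \emph{same} algebraic closure; this is exactly the hypothesis under which the paragraph following Lemma~\ref{CBacl} declares the local Cantor rank to be well defined. You use this implicitly and correctly.
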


For any $\emptyset$-definable set $X$ in a weakly small group $G$, if $\Gamma$ is the algebraic closure of a finite tuple $g$ from $G$, one can define the \emph{local almost stabiliser} of $X$ in $\Gamma$ to be $$Stab_\Gamma(X)=\{x\in \Gamma: CB_{x,g}(x X\Delta X)<CB_g(X)\}$$

$Stab_\Gamma(X)$ is a subgroup of $\Gamma$. If $\delta$ is any subgroup of $\Gamma$, we write $Stab_\delta(X)$ for $Stab_\Gamma\cap\delta$. Here is a local analogue of what happens for the stabiliser of a definable set of maximal Morley rank in an omega-stable structure~:

\begin{prop}\label{fini}Let $G$ be a weakly small group, $g$ a finite tuple of $G$, and $X$ a $g$-definable subset of $X$. If $\delta$ is a subgroup of $dcl(g)$ and if $X$ has maximal Cantor rank over $g$, then $Stab_\delta(X)$ has finite index in $\delta$.\end{prop}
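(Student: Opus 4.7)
My plan is to associate to each translate $xX$ (for $x \in \delta$) its set of generic $1$-types of $G$ over $g$, and to show that this set-valued map factors through $\delta / Stab_\delta(X)$ as an injection into a finite power set, immediately bounding the index.

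First, set $r = CB_g(X) = CB_g(G)$ (using the maximality hypothesis) and $d = dCB_g(G)$, both finite by weak smallness. Let $\mathcal{G}$ be the set of complete $1$-types over $g$ of Cantor rank $r$, so $|\mathcal{G}| = d$. For each $g$-definable $Y \subseteq G$, set $\pi(Y) = \{p \in \mathcal{G} : Y \in p\}$. Then $\pi$ commutes with Boolean operations, and $CB_g(Y) < r$ if and only if $\pi(Y) = \emptyset$.

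Next I would exploit the hypothesis $\delta \subseteq dcl(g)$: for $x \in \delta$, left translation by $x$ is a $g$-definable bijection of $G$, so $xX$ is $g$-definable, and Lemma \ref{fibresf} gives $CB_g(xY) = CB_g(Y)$ for every $g$-definable $Y$. Since $x \in acl(g)$, Lemma \ref{CBacl} allows one to replace $CB_{x,g}$ by $CB_g$ in the defining condition of $Stab_\Gamma(X)$. Combining, $x \in Stab_\delta(X)$ iff $CB_g(xX \Delta X) < r$ iff $\pi(xX) = \pi(X)$.

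Finally, consider the map $\phi : \delta \to 2^{\mathcal{G}}$, $x \mapsto \pi(xX)$; its range has at most $2^d$ elements. For $x, y \in \delta$, writing $xX \Delta yX = y(y^{-1}xX \Delta X)$ and applying Lemma \ref{fibresf} to left translation by $y$, one gets $\phi(x) = \phi(y)$ iff $CB_g(y^{-1}xX \Delta X) < r$ iff $y^{-1}x \in Stab_\delta(X)$. The fibres of $\phi$ are therefore exactly the cosets of $Stab_\delta(X)$ in $\delta$, so $[\delta : Stab_\delta(X)] \leq 2^d$. The only real subtlety is bookkeeping on parameter sets: at each step one must check that the relevant Cantor rank coincides with $CB_g$, which is ensured by Lemmas \ref{fibresf} and \ref{CBacl} together with $\delta \subseteq dcl(g)$.
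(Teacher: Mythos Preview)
The paper does not actually supply a proof of this proposition: it is quoted from \cite{Mil1} as one of the ``(weakly) small tools'' assembled in Section~1, with the remark that it is the local analogue of the classical fact about stabilisers of sets of maximal Morley rank in omega-stable groups. Your argument is precisely that classical argument transported to the local Cantor--Bendixson setting, and it is correct. The key bookkeeping points --- that $x\in dcl(g)$ makes left translation by $x$ a $g$-definable bijection (so Lemma~\ref{fibresf} applies over $g$), and that $x\in acl(g)$ lets you replace $CB_{x,g}$ by $CB_g$ via Lemma~\ref{CBacl} --- are exactly the ones that need checking, and you have handled them. The bound $[\delta:Stab_\delta(X)]\leq 2^{dCB_g(G)}$ you obtain is the expected one.
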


Next proposition can be found in \cite{Mil}.

\begin{prop}\label{decom}Let $G$ be a small group, and $f$ a definable group homomorphism of $G$. There exists a natural number $n$ such that $\ker f^n\cdot\ima f^n$ equals $G$.\end{prop}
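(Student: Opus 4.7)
The plan is to follow the classical Fitting decomposition in the small setting. Writing $A$ for the finite tuple over which $f$ is definable, introduce the descending chain $I_n = \ima f^n$ and the ascending chain $K_n = \ker f^n$ of $A$-definable subgroups of $G$. I would aim to show that both chains become stationary at some common step $N$, then extract the set-theoretic identity $G = K_N \cdot I_N$.

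Granted stabilisation, the algebraic conclusion is routine. For $g \in G$, the element $f^N(g)$ lies in $I_N = I_{2N}$, so $f^N(g) = f^N(f^N(h))$ for some $h \in G$. Setting $k = g \cdot f^N(h)^{-1}$ and applying the homomorphism $f^N$ gives $f^N(k) = 1$, hence $k \in K_N$ and $g = k \cdot f^N(h)$. One should stress that in the non-abelian case we do not claim that $K_N \cap I_N$ is trivial nor that the two factors commute; only the product of sets exhausts $G$.

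The substantive step is thus the stabilisation of the two chains. Theorem \ref{dccm} alone gives only the stabilisation of the traces $I_n \cap acl(\emptyset)$ and $K_n \cap acl(\emptyset)$ after absorbing $A$ as parameters, which is insufficient a priori: $I_{n+1}$ could sit strictly inside $I_n$ while matching it on the algebraic closure. This is where the full smallness of $G$, rather than mere weak smallness, enters: the type space $S_1(A)$ is countable and carries a Cantor-Bendixson rank, so every strict inclusion $I_{n+1} \subsetneq I_n$ is witnessed by a complete $1$-type over $A$ in $I_n \setminus I_{n+1}$, and these witnesses carry ordinal CB ranks bounded above by $CB_A(G)$. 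Combining this bookkeeping with Lemma \ref{fibresf} applied to the surjection $f \colon I_n \twoheadrightarrow I_{n+1}$, I would force the CB rank, and then the CB degree, of $I_n$ to drop strictly at every proper step of the chain.

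The main obstacle I anticipate is the noncommutative setting: the fibres of $f|_{I_n}$ are cosets of $\ker f \cap I_n$, which need not be finite, so Lemma \ref{fibresf} does not apply off the shelf. To circumvent this I would invoke Proposition \ref{fini} in order to replace the bounded-fibres hypothesis by a local almost-stabiliser of finite index inside $I_n$, and run the rank-decrease argument on the corresponding quotient. A dual treatment, applied to the ascending chain $K_n$ via the images $f(K_{n+1}) \subseteq K_n$, handles the kernel side, after which the Fitting calculation of the second paragraph concludes the proof.
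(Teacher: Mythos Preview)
First, note that the paper does not supply a proof of this proposition at all: the sentence immediately preceding it reads ``Next proposition can be found in \cite{Mil}'', and no argument is given. So there is no internal proof to compare yours against; one can only assess your sketch on its own terms.

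The algebraic core of your second paragraph is correct, and in fact it shows that stabilisation of the image chain alone already suffices: once $I_N=I_{2N}$ you obtain $G=K_N\cdot I_N$ without ever using stabilisation of the kernels. So the entire weight of the argument rests on proving that the descending chain $(I_n)_n$ of $A$-definable subgroups is eventually constant.

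Here your sketch has a genuine gap. You correctly observe that Lemma~\ref{fibresf} is inapplicable because the fibres of $f\restriction I_n$ are cosets of $\ker f\cap I_n$, which need not be finite, and that Theorem~\ref{dccm} only controls traces on a fixed finitely generated algebraic closure. But the remedy you propose via Proposition~\ref{fini} does not do what you need. That proposition says: for a $g$-definable set $X$ of \emph{maximal} Cantor rank over $g$, the almost stabiliser $Stab_\delta(X)$ has finite index \emph{inside a subgroup $\delta$ of $dcl(g)$}. It is a purely local statement about a finitely generated definable closure; it gives no information about the global index $[I_n:I_{n+1}]$, nor does it force $CB_A(I_n)$ or $dCB_A(I_n)$ to drop. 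Even if $Stab_\delta(I_{n+1})$ had finite index in every finitely generated $\delta$, nothing would prevent the ambient chain $(I_n)_n$ from descending forever while agreeing on each such $\delta$---which is exactly the scenario you yourself flagged as the obstruction to using Theorem~\ref{dccm}. The phrase ``run the rank-decrease argument on the corresponding quotient'' is not backed by any concrete computation, and I do not see one. The same objection applies verbatim to your dual treatment of the kernel chain.

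In short, you have correctly isolated the obstacle but not overcome it: the passage from local finite-index information (Theorem~\ref{dccm}, Proposition~\ref{fini}) to global stabilisation of $(I_n)_n$ is precisely the missing idea, and the proposal does not supply it. A proof will have to exploit full smallness (countability of $S_n(A)$ for all $n$, not just $n=1$) by some other route; consulting \cite{Mil} for the actual argument is advisable.
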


\section{Weakly small fields}\label{WSF}

We begin by proposing a lightened version of a result from Wagner.

\begin{lem}\label{perf}An infinite weakly small field is perfect.\end{lem}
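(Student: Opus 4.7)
The plan is to reduce to positive characteristic (characteristic zero being automatic) and show that in characteristic $p>0$ the Frobenius $\phi: x\mapsto x^p$ is surjective, i.e.\ $F^p = F$. The Frobenius is a $\emptyset$-definable injection of $F$ onto $F^p$. Since its fibres are singletons, applying Lemma~\ref{fibresf} over $\emptyset$ yields
\[
CB_\emptyset(F) = CB_\emptyset(F^p)\quad\text{and}\quad dCB_\emptyset(F) = dCB_\emptyset(F^p).
\]

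Next I would decompose $F = F^p \sqcup (F\setminus F^p)$ as a disjoint union of $\emptyset$-definable sets and argue by counting degrees: if $F\setminus F^p$ were nonempty and of the same Cantor rank as $F$, its Cantor degree would be at least $1$, so the degree of $F$ would strictly exceed that of $F^p$, contradicting the equality displayed above. Hence either $F = F^p$ and we are done, or else
\[
CB_\emptyset(F\setminus F^p) < CB_\emptyset(F).
\]

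Assume for contradiction that there exists $a\in F\setminus F^p$. Multiplication $m_a: x\mapsto ax$ is an $a$-definable bijection of $F$ with singleton fibres, so by Lemma~\ref{fibresf} applied over $a$ (together with Lemma~\ref{CBacl} if one prefers to work over $\emptyset$) one has $CB_a(F\setminus aF^p) = CB_a(F\setminus F^p) < CB_a(F)$. Consequently every complete $1$-type over $a$ of maximal Cantor rank must contain both formulas ``$x\in F^p$'' and ``$x\in aF^p$''.

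Finally I would observe that $F^p \cap aF^p = \{0\}$: a common nonzero element would write $a$ as a $p$-th power, contrary to the choice of $a$. Every generic $1$-type over $a$ would then realise $x=0$, which is an isolated point of Cantor rank $0$. This contradicts the existence of at least one $1$-type of Cantor rank $\geq 1$ over $a$, which comes from the fact that $S_1(a)$ is an infinite compact Hausdorff space (since $F$ is infinite) and hence has a non-isolated point. The main obstacle is the degree-counting step that upgrades equality of Cantor data into a \emph{strict} rank drop on the complement $F\setminus F^p$; once that is in hand, the multiplicative translation trick and the coset intersection calculation finish the argument.
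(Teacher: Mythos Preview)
Your argument is correct, with one small wrinkle: the claim that $S_1(a)$ is infinite ``since $F$ is infinite'' is not valid in general. You do not need it, though: you are already in the case $CB_a(F\setminus F^p) < CB_a(F)$ with $a\in F\setminus F^p$, so $CB_a(F\setminus F^p)\geq 0$ and hence $CB_a(F)\geq 1$ directly. (The parenthetical about Lemma~\ref{CBacl} should also be dropped: that lemma applies only to parameters in $acl(\emptyset)$, which $a$ need not be; simply run the entire rank--degree computation over the single parameter $a$ from the start, which you in fact do.)

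The paper's proof takes a shorter path. It notes that since the Frobenius is injective, Lemma~\ref{fibresf} forces $F^p$ to have finite additive index in $F$; then, because $F^p$ is an infinite sub\emph{field}, the additive quotient $F/F^p$ is a finite $F^p$-vector space and must be zero. You never invoke this vector-space structure: instead you exploit the \emph{multiplicative} coset picture, observing that $F^p\cap aF^p=\{0\}$ whenever $a\notin F^p$, and derive a rank contradiction from two sets of maximal rank with trivial intersection. The paper's approach is slicker because it uses that the image of Frobenius is a subring, not merely a subgroup; yours is more hands-on, relying only on translation and rank bookkeeping, at the cost of a few extra lines.
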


\begin{proof}Let $f$ be a group homomorphism of some weakly small group $G$. Suppose that $f$ has finite kernel of cardinal $n$. An easy consequence of Lemma~\ref{fibresf} is that the image of $f$ has index at most $n$ in $G$. It follows that the image of the Froboenius map is a sub-field having finite additive (and multiplicative !) index.\end{proof}

\begin{thm}\label{con}A weakly small infinite field, possibly skew, has no definable additive nor multiplicative subgroup of finite index.\end{thm}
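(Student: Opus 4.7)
The plan is to handle the additive and multiplicative cases in parallel, in each reducing a hypothetical proper finite-index definable subgroup to an $F^*$-invariant object (a two-sided ideal in the additive case, a normal subgroup in the multiplicative one) whose triviality in a division ring then forces the subgroup to be the whole group.

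For the additive case, let $H\leq (F,+)$ be definable of finite index $n$. In characteristic zero, $(F,+)$ is a divisible abelian group and has no proper finite-index subgroup, so I may assume $\mathrm{char}(F)=p>0$, in which case $n=p^k$. For each $a\in F^*$, multiplication by $a$ is a definable additive automorphism, so $aH$ is a definable additive subgroup of index $n$. The intersection $I:=\bigcap_{a\in F^*}aH$ satisfies $bI=I$ by reindexing, hence is a left ideal of $F$. The core of the argument is to realize $I$ as a finite intersection $H\cap a_1H\cap\cdots\cap a_kH$, by choosing the $a_j$ greedily so that $H_{j+1}:=H_j\cap a_{j+1}H$ is strictly smaller than $H_j$ at every stage. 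Each $H_j$ has finite additive index and therefore maximum Cantor-Bendixson rank; $H_j\setminus H_{j+1}$ is a non-empty union of cosets of the finite-index subgroup $H_{j+1}$ and so is itself of maximum rank, producing a new maximum-rank $1$-type over the accumulated parameters that lies in $H_j$ but not in $H_{j+1}$. The Cantor-Bendixson degree of $H_j$ therefore drops strictly, and Theorem \ref{dccm}, applied over a suitable finite base containing the $a_j$ (after naming them into $acl(\emptyset)$), forces termination after finitely many steps. The resulting $I$ is a definable left ideal of $F$ of finite additive index; as $F$ is infinite, $I\ne\{0\}$, so $I=F$ (any nonzero left ideal of a division ring being all of it), and hence $H=F$.

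For the multiplicative case, let $V\leq F^*$ be definable of finite index $n$. In the skew setting, conjugation by $F^*$ permutes multiplicative subgroups of index $n$, and the same greedy intersection plus Cantor-degree argument applied to the family $\{aVa^{-1}:a\in F^*\}$ produces a definable normal subgroup $N\trianglelefteq F^*$ of finite index contained in $V$. To go from here to $V=F^*$, I would exploit the additive case: the containment $V\supseteq (F^*)^n$ (since $F^*/V$ has exponent dividing $n$), together with Lemma \ref{fibresf} applied to the definable endomorphism $x\mapsto x^n$ (whose kernel is the finite group of $n$-th roots of unity), shows that $(F^*)^n$ is already of finite multiplicative index, and then Proposition \ref{fini} applied to $F^*$ with $V$ as the maximum-rank set extracts a multiplicative almost-stabiliser which, combined with the additive result, forces $V=F^*$. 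For a genuinely skew $F$, a reduced-norm detour $F^*\to Z(F)^*$ reduces the question to the commutative case.

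The main obstacle is the termination step in the greedy descending chain: the strict drop in Cantor-Bendixson degree at each stage depends on the rank-preservation of cosets of finite-index subgroups, and the finite termination rests on a correct use of Theorem \ref{dccm} over the growing parameter base, which is managed by naming parameters into $acl(\emptyset)$ stage by stage. For the multiplicative case the real difficulty is bridging from a definable normal subgroup of finite index (or from a finite-index set of $n$-th powers) back to $V=F^*$, a step where in the noncommutative setting one must ensure that the commutator subgroup $[F^*,F^*]$ does not constitute an obstruction to the reduction to the centre.
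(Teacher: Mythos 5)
Your additive argument has a genuine gap in the termination step. You intersect $aH$ greedily over all $a\in F^\times$ and invoke a drop in Cantor--Bendixson degree over an accumulating parameter base, but the $CB$-degree is not monotone under base extension: when $a_{j+1}$ is named, the degree of $H_j$ over the larger base can jump up, so there is no decreasing integer invariant to force the chain to stop. In fact termination \emph{must} fail whenever the conclusion fails: if $H\neq F$ then $\bigcap_{a\in F^\times}aH$ is a proper left ideal of the (skew) field $F$, hence $\{0\}$, and no descending chain of finite-index additive subgroups of an infinite $F$ can reach $\{0\}$ in finitely many steps. Your claim that ``the resulting $I$ is a definable left ideal of finite additive index, hence nonzero, hence $=F$'' is therefore circular: you are assuming exactly what would need to fail if $H$ were proper. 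The paper sidesteps this by never intersecting over all of $F^\times$: it fixes an infinite finitely generated algebraic closure $\Gamma$, intersects only $\lambda H\cap\Gamma$ for $\lambda\in\Gamma$, and there Theorem~\ref{dccm} legitimately gives a finite intersection, hence a finite-index \emph{ideal of $\Gamma$} which must equal $\Gamma$; varying $\Gamma$ then gives $H=K$. Your version should be repaired along these lines.

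The multiplicative case is also not in place. Taking the normal core of $V$ is harmless (it is automatically a finite intersection of conjugates, index at most $n!$), but everything after that is a wish-list: ``combined with the additive result, forces $V=F^\times$'' is not an argument, and the proposed reduced-norm detour $F^\times\to Z(F)^\times$ is unavailable, since a weakly small division ring need not be finite-dimensional over its centre (that is a \emph{conclusion} of Section~\ref{WSDR}, and only in positive characteristic, locally). The paper's actual mechanism is different and is the crux you are missing: one passes to the almost additive stabiliser in a finitely generated subfield $\delta$ of $M$ and of each of its cosets (Proposition~\ref{fini}), shows this stabiliser is a finite-index ideal of $\delta$ hence all of $\delta$, and then runs Poizat's trick: generically on each coset $aM$ one has $1+x\in aM$, so $x^{-1}+1\in M$, and letting $x^{-1}$ run over $a^{-1}M$ one sees that the complement of $M$ has small Cantor rank, which is impossible unless $M=K^\times$. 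You should also treat the locally finite case separately, as the paper does, using Wedderburn to get commutativity and then $p$-divisibility of $K^\times$ (Lemma~\ref{perf}) for the multiplicative half.
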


\begin{proof}Let $K$ be this field 
and let $H$ be a definable additive subgroup of $K$ having finite index. Suppose first that there is an infinite finitely generated algebraic closure $\Gamma$. Note that $\Gamma$ is a field. The intersection of $\lambda H\cap \Gamma$ where $\lambda$ runs over $\Gamma$ is a finite intersection by Theorem~\ref{dccm} hence has finite index in $\Gamma$. It is also an ideal of $\Gamma$ and must equal $\Gamma$. So $\Gamma$ is included in $H$. As this holds for any infinite finitely generated $\Gamma$, the group $H$ equals $K$.

Otherwise, $K$ is locally finite. By Wedderburn's theorem, $K$ is commutative and equals $acl(\emptyset)$. According to Theorem~\ref{dccm}, it satisfies the descending chain condition on definable subgroups. $K$ has a smallest definable additive subgroup of finite index, which must be an ideal, and hence equals $K$.

For the multiplicative case now, let us consider a multiplicative subgroup $M$ of $K^\times$ having index~$n$. Let us suppose first that there is $\delta$ an infinite finitely generated sub-field of $K$. There is no harm in extending $\delta$ so that each coset of $M$ be $\delta$-definable. $M$ has maximal Cantor rank over $\delta$ by Lemma~\ref{fibresf}, so its almost additive stabiliser in $\delta$ has finite additive index in $\delta$ by Proposition~\ref{fini}, as well as the almost stabiliser of any of its cosets. So the almost stabiliser of all the cosets is an ideal of $\delta$ having finite index, hence equals the whole of $\delta$. We finish as Poizat in \cite{dminimal}~: we have just shown that $1+aM\simeq aM$ for every coset $aM$, where $\simeq$ stands for equality up to small Cantor rank over $\delta$. For every coset $aM$, and every $x$ in $aM$ but a small ranked set, $1+x$ belongs to $aM$, so $x^{-1}+1\in M$, and the complement of $M$ has a small Cantor rank~: $M$ is exactly $K^\times$.

Otherwise $K$ is locally finite and has characteristic $p$. By Lemma~\ref{perf}, the group $K^\times$ is $p$-divisible, so $K^\times$ can not have a proper subgroup of finite index.
\end{proof}


Before going further, let us remind the reader with a few definitions. Let $L/K$ be a field extension. It is a \emph{Kummer extension} if it is generated by $K$ and one $n$th root of some element in $K$. It is an \emph{Artin-Schreier extension}, if $L$ is generated by $K$ and one $x$ such that $x^p-x$ belongs to $K$ ($x$ is called a \emph{pseudo-root} of $K$).

\begin{fact}[Artin-Schreier, Kummer \cite{Lan}]\label{AS} Let $K$ be a field of characteristic $p$ (possibly zero) and $L$ a cyclic Galois extension of finite degree $n$.\begin{itemize}
                \item[(i)] Suppose that $p$ is zero, or coprime with $n$. If $K$ contains $n$ distinct $n$th roots of~$1$, then $L/K$ is a Kummer extension.
		\item[(ii)] If $p$ equals $n$, then $L/K$ is an Artin-Schreier extension.\end{itemize}\end{fact}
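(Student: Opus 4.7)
The plan is to apply the classical Lagrange-resolvent method: in each case I would look for a specific eigenvector of a generator $\sigma$ of $\mathrm{Gal}(L/K) = \langle \sigma \rangle$, show that this eigenvector generates $L$ over $K$, and then read off the algebraic relation it satisfies.

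For (i), after fixing a primitive $n$-th root of unity $\zeta \in K$, I would seek $\alpha \in L^\times$ with $\sigma(\alpha) = \zeta\,\alpha$. Two immediate consequences follow: $\sigma(\alpha^n) = \zeta^n \alpha^n = \alpha^n$ forces $\alpha^n \in K$, and the elements $\alpha, \zeta\alpha, \dots, \zeta^{n-1}\alpha$ are $n$ distinct conjugates of $\alpha$, giving $[K(\alpha):K]\geq n=[L:K]$ and hence $L=K(\alpha)$. To produce such an $\alpha$ I would use the resolvent $\alpha = \sum_{i=0}^{n-1}\zeta^{-i}\sigma^i(\beta)$ for a suitable $\beta \in L$; a direct index shift gives $\sigma(\alpha) = \zeta\,\alpha$, and the only thing to check is that $\beta$ can be chosen to make $\alpha$ nonzero.

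For (ii), with $n = p = \mathrm{char}\,K$, I would instead look for $\alpha \in L$ satisfying $\sigma(\alpha) = \alpha + 1$. Then $\sigma(\alpha^p - \alpha) = (\alpha+1)^p - (\alpha+1) = \alpha^p - \alpha$ places $\alpha^p - \alpha$ in $K$, while the $p$ distinct conjugates $\alpha, \alpha+1, \dots, \alpha+p-1$ force $L = K(\alpha)$. Producing $\alpha$ amounts to the additive form of Hilbert 90: any $c \in L$ with $\mathrm{Tr}_{L/K}(c) = 0$ is of the shape $\sigma(\alpha) - \alpha$, and applied to $c = 1$ (whose trace is $p\cdot 1 = 0$) this yields what I need. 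Concretely I would pick $\gamma \in L$ with $\mathrm{Tr}(\gamma) = 1$ and set $\alpha = -\sum_{i=0}^{p-1} i\,\sigma^i(\gamma)$, which telescopes using $\sigma^p = \mathrm{id}$ to give $\sigma(\alpha) - \alpha = 1$.

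The one non-formal step in either argument is showing that the resolvent does not vanish identically as $\beta$ (resp.\ $\gamma$) varies: this is exactly the content of Dedekind's linear independence of the characters $1,\sigma,\dots,\sigma^{n-1}$, and it is the only place the Galois hypothesis is really used. Everything else is symbolic manipulation, so I expect that non-vanishing to be the main obstacle --- though since the statement is imported from Lang, no originality beyond recording this standard route is required.
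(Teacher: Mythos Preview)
Your sketch is correct and is precisely the classical argument one finds in Lang: Lagrange resolvents for the multiplicative case, the additive Hilbert~90 construction for the Artin--Schreier case, with Dedekind's independence of characters supplying the non-vanishing in both. There is nothing to compare against, however, because the paper does not prove this statement at all: it is recorded as a \emph{Fact} with a bare citation to \cite{Lan} and is used as a black box in the proof of Corollary~\ref{wa}. So your write-up goes beyond what the paper itself provides, but it matches the reference the paper points to.
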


\begin{cor}[Wagner \cite{W1}]\label{wa}A small field is finite or algebraically closed.\end{cor}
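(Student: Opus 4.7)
The plan is to derive a contradiction by producing, in $K$ or in a suitable finite algebraic extension of it, a proper definable additive or multiplicative subgroup of finite index, which is forbidden by Theorem~\ref{con}. Assume towards contradiction that $K$ is small, infinite, and not algebraically closed. By Lemma~\ref{perf} the field $K$ is perfect, so every algebraic extension of $K$ is separable and any element of $\bar K \setminus K$ lies in some nontrivial finite Galois extension $L/K$. I would pick such an $L/K$ and, applying Cauchy's theorem to the nontrivial finite group $\mathrm{Gal}(L/K)$, select an element of prime order $q$; its fixed field $K_0$ realises $L/K_0$ as a cyclic Galois extension of prime degree $q$. Since $K_0$ is finite over the small infinite field $K$, it is interpretable in $K$ (as a finite-dimensional $K$-vector space with definable multiplication), hence itself small and infinite, so Theorem~\ref{con} still applies over $K_0$.

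Next I would split according to how $q$ compares with the characteristic $p$ of $K$. If $q = p$, Fact~\ref{AS}(ii) identifies $L/K_0$ as an Artin-Schreier extension, so the additive homomorphism $\wp\colon x \mapsto x^p - x$ of $K_0$ fails to be surjective. Since $\ker \wp = \mathbb{F}_p$ is finite, the argument from the proof of Lemma~\ref{perf} (a direct application of Lemma~\ref{fibresf}) forces $\wp(K_0)$ to have finite additive index in $K_0$, giving a proper definable additive subgroup of finite index and contradicting Theorem~\ref{con}. If instead $q \neq p$, I would further enlarge $K_0$ to $K_0' = K_0(\zeta_q)$; since $[K_0':K_0]$ divides $q - 1$ and is coprime to $q$, the compositum $LK_0'$ remains cyclic of degree exactly $q$ over $K_0'$. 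Fact~\ref{AS}(i) then yields a Kummer generator, so $(K_0'^\times)^q \subsetneq K_0'^\times$, while the $q$-th power map on $K_0'^\times$ has kernel the group of $q$-th roots of unity, of cardinality $q$; Lemma~\ref{fibresf} therefore ensures its image has finite multiplicative index in $K_0'^\times$, again contradicting Theorem~\ref{con}.

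The main obstacle is the reduction to a cyclic Galois extension of prime degree, settled cleanly by Cauchy's theorem, together with the bookkeeping needed to ensure that each auxiliary field $K_0$ and $K_0'$ produced along the way is still small and infinite so that Theorem~\ref{con} genuinely applies to it. Once this is secured, the Artin-Schreier/Kummer dichotomy of Fact~\ref{AS}, combined with the kernel-image counting already exhibited in the proof of Lemma~\ref{perf}, delivers the contradiction in both branches of the case analysis.
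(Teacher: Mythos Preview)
Your argument is correct and follows essentially the same Macintyre-style strategy as the paper: reduce to a cyclic extension of prime degree $q$ inside a finite Galois extension, note that the intermediate field is interpretable in $K$ hence still small, and invoke the Artin--Schreier/Kummer dichotomy (Fact~\ref{AS}) together with Theorem~\ref{con} to derive a contradiction. The only organisational difference is that the paper first establishes that $K$ itself contains all roots of unity (so the intermediate field $M$ already has $\zeta_q$ and Kummer applies directly), whereas you adjoin $\zeta_q$ on the fly by passing to $K_0' = K_0(\zeta_q)$ and checking via the coprimality $\gcd(q,q-1)=1$ that the degree-$q$ extension survives; both routes are standard and equally valid.
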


\begin{proof}Let $K$ be a small infinite field. For every natural number $n$, the $n$th power map has bounded fibres so its image has finite index by Lemma~\ref{fibresf}, and the map is onto by Theorem \ref{con}. So is the map mapping $x$ to $x^p-x$. 
We conclude as Macintyre in~\cite{Macf} for omega-stable fields~: first of all, $K$ contains every root of unity. For if $a$ is a $n$th root of $1$ not in $K$ with minimal order, $K(a)/K$ has degree $m<n$. By minimality of $n$, every $m$th root of unity is in $K$. If $m$ is zero or coprime with the characteristic of $K$, then $K(a)$ is of the form $K(b)$ with $b^m\in K$ after Fact~\ref{AS}. As $K^\times$ is divisible, $b$ is in~$K$, a contradiction. So $K$ has positive characteristic $p$, and $p$ divides $m$. $K(a)$ contains an extension of degree $p$ over $K$, which is an Artin-Schreier extension after Fact~\ref{AS}, a contradiction.

Secondly, if $K$ is not algebraically closed, it has a normal extension $L$ of finite degree~$n$, which must be separable as $K$ is perfect (Lemma~\ref{perf})~; its Galois group contains a cyclic sub-group of prime order $q$, the invariant field of which we call $M$. Note that $L$ is interpretable in a finite Cartesian power of $K$, so is small too. If $q$ is not the characteristic, as $K$ contains every $q$th root of~$1$, the extension $L/M$ is Kummer~; if $q$ equals the characteristic, $L/M$ is an Artin-Schreier extension, a contradiction in both cases.\end{proof}

Note that the first part of the previous proof still holds for weakly small fields~:

\begin{cor}\label{sol}An infinite weakly small field contains every root of unity, hence has no solvable by radical extension.\end{cor}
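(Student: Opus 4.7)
The plan is to mirror the first half of the proof of Corollary~\ref{wa}, replacing the smallness inputs by the weakly small Theorem~\ref{con}.

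First I would check that $K^\times$ is divisible. For each $n\geq 1$ the map $x\mapsto x^n$ is a definable group endomorphism of $K^\times$ with kernel of size at most $n$, so by Lemma~\ref{fibresf} (used exactly as in Lemma~\ref{perf}) its image has multiplicative index at most $n$ in $K^\times$. Being a definable subgroup of finite index of the infinite weakly small field $K$, Theorem~\ref{con} forces it to be the whole of $K^\times$.

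Next, if $K$ has positive characteristic $p$, the same argument applied to the additive homomorphism $x\mapsto x^p-x$ (kernel $\mathbb{F}_p$, of size $p$) shows that this map is surjective onto $K$. In particular $K$ admits no proper Artin-Schreier extension~: if $\alpha^p-\alpha=c\in K$, one writes $c=\beta^p-\beta$ for some $\beta\in K$, and then $\alpha-\beta\in\mathbb{F}_p\subseteq K$, whence $\alpha\in K$.

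Now I would run Macintyre's argument. Assume for contradiction that some root of unity is not in $K$ and pick $a$ of smallest order $n$ with $a\notin K$. Then $K(a)/K$ is an abelian Galois extension of degree $m<n$, and minimality of $n$ ensures that every $m$th root of unity lies in $K$. Pick a prime $q\mid m$ and an intermediate extension $M/K$ cyclic of degree $q$ (which exists because the Galois group is abelian and $q\mid m$). If $q\neq p$, Fact~\ref{AS}(i) yields $M=K(b)$ with $b^q\in K$, and divisibility of $K^\times$ gives $b\in K$, contradicting $[M:K]=q$. If $q=p$, Fact~\ref{AS}(ii) makes $M/K$ Artin-Schreier, contradicting the previous paragraph. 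Hence all roots of unity belong to $K$.

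For the last assertion, a solvable-by-radical extension of $K$ is contained, after adjunction of the relevant roots of unity (already present in $K$), in a tower whose successive steps are cyclic of prime order and therefore, by Fact~\ref{AS}, either Kummer or Artin-Schreier~; each individual step collapses by divisibility of $K^\times$ or surjectivity of $x\mapsto x^p-x$, so the whole tower is trivial. The only step requiring genuine care beyond Macintyre's proof is verifying that the two smallness inputs (divisibility of $K^\times$ and surjectivity of the Artin-Schreier map) survive in the weakly small setting~; this is exactly what Theorem~\ref{con} provides, so no further obstacle arises.
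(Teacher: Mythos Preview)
Your proof is correct and follows essentially the same route as the paper: both use Theorem~\ref{con} to obtain divisibility of $K^\times$ and surjectivity of the Artin-Schreier map, then run Macintyre's argument for the roots of unity (the paper simply points back to the first paragraph of the proof of Corollary~\ref{wa}, noting it only used Theorem~\ref{con}), and finally collapse the radical tower step by step. The only cosmetic difference is that you pass to a prime-degree cyclic subextension $M/K$ whereas the paper applies Fact~\ref{AS} directly to $K(a)/K$; both variants work.
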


\begin{proof}If the extension $L/K$ is solvable by radical, there is a tower of fields $K=K_0\subset K_1\subset\dots\subset K_n=L$ so that each $K_{i+1}/K_i$ be generated by either a $n$th root or a pseudo-root of some element~$K_i$. But $K_0$ has no Artin-Schreier or Kummer extension by Corollary~\ref{wa}.\end{proof}


Note however that, as an algebraic extension of a weakly small field has no obvious reason to be weakly small, we cannot apply Macintyre's argument to deduce that a weakly small field is finite or algebraically closed. Nevertheless, stepping on the additive structure of the field, we can show that every algebraic extension of an infinite weakly small field is Artin-Schreier closed. This is a first step towards Problem 12.5 in \cite{W1} asking whether an infinite weakly small field is algebraically closed.

\begin{defn}An \emph{Abelian structure} is any abelian group together with predicates interpreting subgroups of its finite Cartesian powers.\end{defn}

As for a pure module, an Abelian structure has quantifier elimination up to positive prime formulas (see \cite[Weispfenning]{We}, or \cite[Theorem 4.2.8]{WStable})~:

\begin{fact}[Weispfenning]\label{pp} In an Abelian structure $A$, a definable set is a boolean combination of cosets of $acl(\emptyset)$-definable subgroups of Cartesian powers of~$A$.\end{fact}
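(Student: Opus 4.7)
The strategy is to reduce to a Baur--Monk type quantifier elimination: show that every formula in the language of an Abelian structure is equivalent to a Boolean combination of positive primitive (pp) formulas, i.e.\ formulas of the shape $\exists \bar y\, \bigwedge_j \alpha_j(\bar x, \bar y)$ with each $\alpha_j$ atomic. From this the statement of the fact follows immediately: the atomic formulas of an Abelian structure are either linear equations in the group operation or memberships in one of the named subgroups of some $A^k$, so each atomic formula without parameters defines a subgroup of a Cartesian power; taking finite conjunctions and existential projections preserves this, hence a parameter-free pp-formula defines a subgroup. Substituting parameters $\bar a$ then translates that subgroup: the solution set is either empty or a coset of the $\emptyset$-definable (\emph{a fortiori} $acl(\emptyset)$-definable) subgroup obtained by setting $\bar a = \bar 0$.

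\textbf{The induction.} I would argue by induction on formula complexity. The atomic case is trivial. Conjunctions, disjunctions, and negations of Boolean combinations of pp-formulas are still Boolean combinations of pp-formulas. The only serious step is the existential quantifier. Given $\exists y\, \varphi(\bar x, y)$ with $\varphi$ a Boolean combination of pp-formulas, I put $\varphi$ in disjunctive normal form so that $\exists y$ distributes over disjunctions, reducing to instances
$$
\exists y\, \Bigl( \theta(\bar x, y) \,\wedge\, \bigwedge_{i=1}^{k} \neg \eta_i(\bar x, y) \Bigr),
$$
with $\theta$ and each $\eta_i$ positive primitive. I must show this existential is itself equivalent to a Boolean combination of pp-formulas in $\bar x$.

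\textbf{The main obstacle.} This is the heart of the argument, and the step where one needs something beyond trivial manipulations. Fix $\bar a$ satisfying $\exists y\,\theta(\bar a, y)$. Then $\theta(\bar a, A)$ is a coset of the $\emptyset$-definable subgroup $\Theta := \theta(\bar 0, A)$, and each $\eta_i(\bar a, A) \cap \theta(\bar a, A)$ is either empty or a coset of the $\emptyset$-definable subgroup $H_i := \eta_i(\bar 0, A) \cap \Theta$. Whether $\theta(\bar a, A)$ avoids $\bigcup_i \eta_i(\bar a, A)$ therefore depends only on (i) which fibres $\eta_i(\bar a, A) \cap \theta(\bar a, A)$ are non-empty, which is itself a Boolean combination of pp-conditions on $\bar a$; and (ii) the combinatorics of the indices $[\Theta : H_i]$, which are fixed invariants of the ambient Abelian structure and, via B.~H.~Neumann's lemma on covers of a group by finitely many cosets, reduce to a bounded case analysis. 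Combining (i) and (ii) expresses the displayed existential as a Boolean combination of pp-formulas in $\bar x$, closing the induction. The technical difficulty I would budget time for lies in (ii): one must re-index the $\eta_i$ and separately handle the subgroups $H_i$ of infinite index versus finite index in $\Theta$, so that the truth value coming from Neumann's lemma is expressible as a fixed Boolean combination of the pp-conditions from (i).
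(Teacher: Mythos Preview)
The paper does not prove this statement: it is stated as a \emph{Fact} attributed to Weispfenning, with references to \cite{We} and \cite[Theorem 4.2.8]{WStable}, and no argument is given in the paper itself. So there is no ``paper's own proof'' to compare against.

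That said, your sketch is the standard Baur--Monk argument and is essentially correct in outline. A couple of remarks. First, your proof actually yields $\emptyset$-definable subgroups, which is stronger than the $acl(\emptyset)$-definability in the stated fact; you note this, and it is fine. Second, in step (ii) the clean way to phrase the output of Neumann's lemma is not just as a yes/no answer but via the \emph{index sentences} $[\Theta : H_{i_1}\cap\cdots\cap H_{i_r}] \geq n$: in a fixed structure these are constants (true or false), so the resulting Boolean combination of pp-formulas depends on the structure, exactly as in the module case. If you want an elimination uniform across elementarily equivalent structures you must keep these index conditions as sentences in the language; for the fact as stated (about definable sets in a fixed $A$) this distinction is immaterial.
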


An Abelian structure $A$ is stable~: let us take a formula $\varphi(x,y)$ such that $\varphi(x,0)$ defines an $acl(\emptyset)$-definable subgroup of $A^n$, and $\varphi(x,y)$ the coset of $y$. Two formulas $\varphi(x,a)$ and $\varphi(x,b)$ define cosets of the same group, so they must be equal or disjoint. It follows that there are countably many $\varphi$-types over any countable set of parameters. By Fact \ref{pp}, this is sufficient to show that $A$ is stable.

In a stable structure, we call a \emph{dense forking chain} any chain of complete types $p_q$ indexed by $\mathbf{Q}$ such that for every rational numbers $q<r$, the type $p_r$ be a forking extension of $p_q$.

Stable theories with no dense forking chains have been introduced in~\cite[Pillay]{Pil}. They generalise superstable ones. In a stable structure $M$ with no dense forking chains, every complete type (and not only $1$-types !) has an ordinal \emph{dimension}, and for any dimension $\alpha$, a \emph{Lascar $\alpha$-rank}. We shall write $dim(p)$ for the dimension of the type $p$, and $U_\alpha(p)$ for its $\alpha$-rank. They are defined as follows :

\begin{defn}[Pillay]For two complete types $p\subset q$, let us define the \emph{dimension of $p$ over $q$} written $dim(p/q)$ by the following induction.\begin{itemize}
\item $dim(p/q)$ is $-1$ if $q$ is a nonforking extension of $p$.
\item $dim(p/q)$ is at least $\alpha+1$ if there are non forking extensions $p'$ and $q'$ of $p$ and $q$, and infinitely many complete types $p_1,p_2,\dots$ such that $p'\subset p_1\subset p_2\subset\dots\subset q'$ and $dim(p_i/p_{i+1})\geq\alpha$ for all natural number $i$.
\item $dim(p/q)$ is at least $\lambda$ for a limit ordinal $\lambda$ if $dim(p/q)\geq\alpha$ for all $\alpha<\lambda$.
\end{itemize}\end{defn}

\begin{defn}[Pillay]For a complete type $p$, we set $dim(p)=dim(p/q)$ where $q$ is any algebraic extension of $p$.\end{defn}

\begin{defn}[Pillay]For every ordinal $\alpha$, we define inductively the $U_\alpha$-rank of a complete type $p$ by\begin{itemize}
\item $U_\alpha(p)$ is at least $0$.
\item $U_\alpha(p)$ is at least $\beta+1$ if there is an extension $q$ of $p$ such that $dim(p/q)\geq\alpha$ and $U_\alpha(q)\geq\beta$.
\item $U_\alpha(p)$ is at least $\lambda$ for a limit ordinal $\lambda$ if $U_\alpha(p)\geq\beta$ for all $\beta<\lambda$.
\end{itemize}
\end{defn}

To any type-definable stable group in $M$ can be associated the $U_\alpha$-rank and the dimension of any of its generic types over $M$. We refer the reader to~\cite[Herwig, Loveys, Pillay, Tanovi\'{c}, Wagner]{Pil,HLPTW} for more details. We shall just recall two facts : the Lascar inequalities which are still valid for the $U_\alpha$-rank, as well as their group version ; and the link between the $U_\alpha$-rank and the existence of a dense forking chain.

\begin{fact}[Lascar inequalities for $U_\alpha$-rank \cite{Pil,HLPTW}]\ 

\begin{enumerate}

\item In a stable structure, for every tuple $a,b$ and every set $A$, $$U_\alpha(b/Aa)+U_\alpha(a/A)\leq U_\alpha(ab/A)\leq U_\alpha(b/Aa)\oplus U_\alpha(a/A)$$\item For any type-definable group $G$ in a stable structure, and any type-definable subgroup $H$ of $G$, $$U_\alpha(H)+U_\alpha(G/H)\leq U_\alpha(G)\leq U_\alpha(H)\oplus U_\alpha(G/H)$$
\end{enumerate}\end{fact}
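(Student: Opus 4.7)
The plan is to reduce part (2) to part (1) by a generic-coset argument, and to prove part (1) by transfinite induction on the relevant ordinal ranks, following the classical proof of the Lascar inequalities for $U$-rank but systematically replacing "forking extension" by "extension of dimension at least $\alpha$".

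For the upper bound in (1), I would induct on $U_\alpha(ab/A)$. Fix $\gamma < U_\alpha(ab/A)$ and choose an extension $tp(ab/B)$ with $\dim(ab/A \to B) \geq \alpha$ and $U_\alpha(ab/B) \geq \gamma$. The key projection lemma to establish is that in such an $\alpha$-dimensional extension the rank $U_\alpha$ must strictly drop on at least one of the projected types $tp(a/A)$ versus $tp(a/B)$ or $tp(b/Aa)$ versus $tp(b/Ba)$, so that the induction hypothesis applied to $ab/B$ gives
\[
\gamma + 1 \leq U_\alpha(ab/B) + 1 \leq \bigl(U_\alpha(b/Ba) \oplus U_\alpha(a/B)\bigr) + 1 \leq U_\alpha(b/Aa) \oplus U_\alpha(a/A).
\]
For the lower bound I would fix $\beta \leq U_\alpha(b/Aa)$ and induct on $\beta$ to show $U_\alpha(ab/A) \geq U_\alpha(a/A) + \beta$. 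At a successor step, take a parameter set $A'$ with $\dim(b/Aa \to b/A'a) \geq \alpha$ and $U_\alpha(b/A'a) \geq \beta - 1$; transitivity of $\alpha$-dimensional non-forking then forces $tp(ab/A')$ to be an $\alpha$-dimensional extension of $tp(ab/A)$, and applying the induction hypothesis to $ab/A'$ concludes the step. Limit ordinals are immediate from the definition of $U_\alpha$.

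For (2), let $g$ be a generic of $G$ over a small enough set over which $H$ is defined. Standard coset arguments give $U_\alpha(G) = U_\alpha(g/\emptyset)$, $U_\alpha(G/H) = U_\alpha(gH/\emptyset)$ (since $gH$ is generic in $G/H$) and $U_\alpha(H) = U_\alpha(g/gH)$ (since $g$ is generic in its coset over the canonical parameter $gH$). Applying part (1) with $a := gH$ and $b := g$ then yields the two-sided inequality directly.

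The main obstacle will be the subtlety in the upper-bound projection step: one must verify that the usual non-forking calculus (monotonicity, transitivity, and the drop-on-projection lemma) continues to hold when "forking" is weakened to "$\alpha$-dimensional extension", and that Pillay's definition of $\dim(p/q)$ behaves well under composition of extensions. Once this infrastructure is in place, the ordinal arithmetic with $+$ and $\oplus$ is routine and mimics verbatim the classical $U$-rank proof.
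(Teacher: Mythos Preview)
Your approach to (2) matches the paper's: both reduce to (1) by taking a generic $g$ of $G$ and applying the tuple inequality to the pair $(gH,g)$. Two points are worth noting.

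First, the paper does not prove (1) at all; it simply cites \cite{Pil,HLPTW}. Your transfinite-induction sketch for (1) is therefore extra work beyond what the paper does, and you yourself flag the genuine difficulty (checking that the drop-on-projection and transitivity lemmas go through when ``forking'' is replaced by ``extension of dimension $\geq\alpha$''). That is exactly the content of the cited references, so there is no need to redo it here.

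Second, for (2) the paper is more careful than your sketch on two technicalities. Since $H$ is only \emph{type}-definable, the coset $gH$ is a hyperimaginary, so the paper explicitly passes to $M^{heq}$ before invoking (1) with hyperimaginary parameters; you should do the same rather than treating $gH$ as an ordinary parameter. Moreover, the claim that a generic $g$ of $G$ is automatically generic in its coset $gH$ over the parameter $gH$ is not entirely trivial: the paper argues it by picking $b\in H^0$ generic over $Ma$, observing that $ab$ is generic in $aH$ over $M\cup\{a_H\}$, and then using that $a$ and $ab$ lie in the same $G^0$-coset so realise the same generic type over $M$. Your ``standard coset arguments'' line hides exactly this step.
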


\begin{proof}We only prove point $(2)$, which does not appear anywhere to the author's knowledge but follows from $(1)$. Note that passing from the ambient structure $M$ to $M^{heq}$, one can use hyperimaginary parameters in $(1)$.

Let $tp(a/M)$ be a generic type of $G$. We write $a_H$ the hyperimaginary element which is the image of $a$ in $G/H$. The type $tp(a_H/M)$ is also a generic of $G/H$. Let $b$ be in the connected component $H^0$ of $H$, and generic over $M\cup\{a\}$. So $ab$ is a generic of $aH$ over $M\cup\{a\}$, hence over $M\cup\{a_H\}=M\cup\{(ab)_H\}$. As $a$ and $ab$ are in the same class modulo $G^0$, they realise the same generic type over $M$. It follows that $tp(a/M,a_H)$ is a generic of $aH$. Now apply point $(1)$ taking some generic of $G$ for $a$ and $b=a_H$.\end{proof}

See \cite[Lemma 7]{HLPTW} and \cite[Remark 9]{HLPTW} for

\begin{fact}\label{DFC}Let $p$ be a complete $n$-type. There is a dense forking chain of $n$-types containing $p$ if and only if the rank $U_\alpha(p)$ is not ordinal for every ordinal $\alpha$.\end{fact}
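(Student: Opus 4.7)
My plan is to prove the two implications separately; both rely on the density of $\mathbb{Q}$ being built into Pillay's definition of $\dim$.

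For the implication from a dense forking chain to non-ordinality of every $U_\alpha(p)$, I would fix such a chain $(p_q)_{q\in\mathbb{Q}}$ with $p\subseteq p_q$ for every $q$. The heart of the argument is a combinatorial claim: $\dim(p_q/p_r)\geq\alpha$ for every ordinal $\alpha$ and every pair $q<r$ in $\mathbb{Q}$. This is proved by transfinite induction on $\alpha$. The base case uses that $p_r$ is a forking extension of $p_q$, while the successor step $\alpha+1$ picks an infinite sequence $q<s_1<s_2<\cdots<r$ and observes that, with $p'=p_q$ and $q'=p_r$ as nonforking extensions of themselves, the chain $p'\subset p_{s_1}\subset p_{s_2}\subset\cdots\subset q'$ witnesses $\dim(p_q/p_r)\geq\alpha+1$, the inductive hypothesis supplying $\dim(p_{s_i}/p_{s_{i+1}})\geq\alpha$. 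The same argument, applied to a sequence $s_i<q_0$ accumulating at the bottom, also gives $\dim(p/p_{q_0})\geq\alpha$ for every $\alpha$, which in particular forces $p\subsetneq p_{q_0}$. A second transfinite induction, on $\beta$ for fixed $\alpha$, then shows $U_\alpha(p_q)\geq\beta$ for every rational $q$: at the successor step, any $r>q$ satisfies $\dim(p_q/p_r)\geq\alpha$ by the first claim, while the tail $(p_s)_{s>r}$ is again a dense forking chain, so by induction $U_\alpha(p_r)\geq\beta$, whence $U_\alpha(p_q)\geq\beta+1$. Combining these conclusions for $p_{q_0}$ yields $U_\alpha(p)\geq\beta+1$ for every $\beta$, so $U_\alpha(p)$ is non-ordinal.

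For the converse I would argue contrapositively: assuming every $U_\alpha(p)$ is non-ordinal, build a dense forking chain containing $p$ by a back-and-forth. Enumerate $\mathbb{Q}$ as $(q_n)_n$ and inductively place $p_{q_n}$ while preserving the invariant that every placed type has $U_\alpha$ non-ordinal for every $\alpha$. At stage $n$, let $q_i<q_n<q_j$ be the nearest rationals already treated (with the obvious adjustments at the extremes). Non-ordinality of every $U_\alpha(p_{q_i})$ supplies a forking extension of $p_{q_i}$ with $\dim$-rank as large as required and with all $U_\alpha$ still non-ordinal; choosing $\alpha$ large enough and unwinding the definition of $\dim(p_{q_i}/p_{q_j})\geq\alpha$ --- a dimension itself non-ordinal by the forward direction applied to the chain already placed --- produces such a type strictly between $p_{q_i}$ and $p_{q_j}$, which I take as $p_{q_n}$.

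The main obstacle is this converse direction: one has to coordinate the ordinal parameters $\alpha$ across infinitely many back-and-forth stages and verify that the invariant ``every $U_\alpha$ is non-ordinal'' propagates from $p$ to each newly placed $p_{q_n}$. This is precisely where \cite[Lemma 7 and Remark 9]{HLPTW} are invoked; the Lascar inequalities for $U_\alpha$ combined with the infinite-witness structure built into Pillay's definition of $\dim$ are what guarantee that at each stage an interpolant of sufficiently high dimension can be found between any two of the already-treated rationals.
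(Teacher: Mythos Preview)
The paper does not prove this statement: it is stated as a \emph{Fact} and the entire justification given is the sentence ``See \cite[Lemma 7]{HLPTW} and \cite[Remark 9]{HLPTW}''. There is therefore no proof in the paper to compare your proposal against; the author simply imports the result from the literature.

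That said, your sketch of the forward direction (dense forking chain $\Rightarrow$ every $U_\alpha(p)$ non-ordinal) is the natural unwinding of Pillay's definitions and is essentially correct. For the converse, you yourself identify the delicate step --- propagating the invariant ``every $U_\alpha$ is non-ordinal'' through the back-and-forth and finding an interpolant strictly between two already-placed types --- and you defer to \cite[Lemma 7 and Remark 9]{HLPTW} for it. This is exactly what the paper does, so in the end your proposal and the paper agree: both rely on \cite{HLPTW} rather than supplying a self-contained argument for the harder implication.
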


In a $\kappa$-saturated stable structure $M$, for any formula $\varphi(x,y)$, we can compute the Cantor-Bendixson of the topological space $S_\varphi(M)$ whose elements are the complete $\varphi$-types over $M$. Let $\psi(x)$ be another formula and $S_{\varphi,\psi}$ the subset of $S_\varphi(M)$ whose elements are consistent with $\psi$. It is a closed subset of $S_\varphi(M)$. The \emph{local $\varphi$-rank of $\psi$} is the Cantor-Bendixson rank of $S_{\varphi,\psi}$. We write it $CB_\varphi(\psi)$. The \emph{local $\varphi$-rank of a type $p$} is the minimum local $\varphi$-rank of the formulas implied by $p$. If $M$ is a stable group, the \emph{stratified $\varphi$-rank of $\psi$} is its $\phi$-rank, where $\phi(x,\overline{y})$ stands for the formula $\varphi(y_2\cdot x,y_1)$. We write it $CB^*_\varphi(\psi)$.

In a $\kappa$-saturated stable group $G$, let $H$ and $L$ be two type-definable subgroups. $H$ and $L$ are \emph{commensurable} if the index of their intersection is bounded (i.e. less that $\kappa$) in both of them. Recall that this is equivalent to $H$ and $L$ having the same stratified $\varphi$-rank for every formula $\varphi$.


\begin{thm}\label{stabm}Let be an Abelian structure with weakly small universe. Its theory has no dense forking chain.\end{thm}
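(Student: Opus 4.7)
The plan is to derive a contradiction with Theorem~\ref{dccm} by translating a hypothetical dense forking chain into an infinite strictly descending chain of $acl(\emptyset)$-definable subgroups of $A^n$ with infinite index at each step. Suppose $(p_q)_{q\in\mathbb{Q}}$ is such a chain of complete $n$-types. By pp-quantifier elimination (Fact~\ref{pp}), each $p_q$ is determined, modulo its parameter set, by its cosets relative to the $acl(\emptyset)$-definable subgroups of the relevant Cartesian powers of $A$. Stability equips each $p_q$ with a type-definable stabiliser $Stab(p_q)\leq A^n$, which by pp-elimination is an intersection of $acl(\emptyset)$-definable subgroups. Forking of $p_r$ over the base of $p_q$ (for $q<r$) forces $Stab(p_r)$ to sit strictly inside $Stab(p_q)$ with infinite index. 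Picking any ascending sequence $q_1<q_2<\cdots$ in $\mathbb{Q}$ yields an infinite descending chain of type-definable subgroups, each step of infinite index, and by a compactness argument this refines to an infinite chain $H_1\supsetneq H_2\supsetneq\cdots$ of $acl(\emptyset)$-definable subgroups of $A^n$ with infinite index at each inclusion.

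Next, one verifies that the group $A^n$ is itself weakly small so that Theorem~\ref{dccm} applies. This I expect to follow from pp-elimination: an $n$-type of $A$ over a finite set $B$ is given by countably many coset assignments relative to $acl(\emptyset)$-definable subgroups of $A^{n+|B|}$, and these are countable under the weakly small hypothesis on $A$. Theorem~\ref{dccm} then forces the traces $H_i\cap acl(\emptyset)^n$ to stabilise after finitely many steps.

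The main obstacle will be closing the argument: one needs to show that the infinite-index steps $H_i\supsetneq H_{i+1}$ must be witnessed in $acl(\emptyset)^n$ infinitely often, preventing stabilisation of the trace. I expect this to follow from a combination of the Lascar inequalities for the $U_\alpha$-rank applied to the quotients $H_i/H_{i+1}$ together with the fact that type-definable pp-cosets in an Abelian structure admit $acl(\emptyset)^n$-generic representatives; the delicate point is ensuring that enough such witnesses exist simultaneously for infinitely many indices~$i$ so that the trace truly fails to stabilise.
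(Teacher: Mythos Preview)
Your approach has a genuine gap at the step where you claim that $A^n$ is itself weakly small. A $1$-type of $A^n$ over a finite parameter set is precisely an $n$-type of $A$ over a finite set, so what you are asserting is that $A$ has countably many $n$-types over finite sets for every $n$ --- that is, that $A$ is \emph{small}, not merely weakly small. Your suggested justification (``countably many coset assignments relative to $acl(\emptyset)$-definable subgroups of $A^{n+|B|}$'') does not go through: weak smallness controls only the space of $1$-types, and there is no a priori bound on the number of $acl(\emptyset)$-definable subgroups of higher Cartesian powers. The paper's remark immediately following the theorem statement singles out exactly this obstacle as the reason the result is not a routine consequence of Pillay's argument for small $1$-based theories.

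The paper avoids this trap in two moves. First, it uses the Lascar inequalities for the $U_\alpha$-rank to reduce to a dense forking chain of $1$-types, so that all the work happens in arity one. Second, rather than invoking the descending chain condition of Theorem~\ref{dccm} (which, as you noticed, only controls traces on $acl(\emptyset)$ and leaves you with an unresolved ``delicate point''), it uses the chain of pairwise non-commensurable $acl(\emptyset)$-type-definable subgroups $(H_i)_{i\in\mathbf{Q}}$ to manufacture $2^{\aleph_0}$ pairwise inconsistent partial $1$-types over $\emptyset$, distinguished by their stratified local ranks. This contradicts weak smallness directly, with no appeal to chain conditions on $A^n$ and no need to find witnesses inside $acl(\emptyset)$.
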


\begin{rmq}Pillay showed that a small $1$-based structure has no dense forking chain \cite[Lemma 2.1]{Pil}. In particular, a small Abelian structure has no dense forking chain either. The difficulty of Theorem~\ref{stabm} comes from the fact that weak smallness does not bound a priori the number of pure $n$-types for $n\geq 2$, which is a crucial assumption in the proof of~\cite[Lemma 2.1]{Pil}.\end{rmq}

\begin{proof}According to Fact~\ref{DFC}, one just needs to show that for all finite tuple $\overline a$ and all set $A$, there is an ordinal $\alpha$ such that $U_\alpha(\overline a/A)$ is ordinal. Note that the first of Lascar inequalities for the $U_\alpha$-rank implies that $U_\alpha((a_1,\dots,a_n)/A)$ is less or equal to $U_\alpha((a_2,\dots,a_n)/Aa_1)\oplus U_\alpha(a_1/A)$. So, by induction on the arity of $\overline a$, and Fact~\ref{DFC} again, we may consider only $1$-types, and suppose for a contradiction that there be a dense forking chain of arity $1$.

\textit{(1) We first claim that there exists a dense ordered chain $(H_i)_{i\in\mathbf{Q}}$ of $acl(\emptyset)$-type-definable pairwise non commensurable subgroups.}

Let $(tp(a/A_i))_{i\in\mathbf{Q}}$ be a dense forking chain, that is $A_i$ is included in $A_j$ and $tp(a/A_j)$ forks over $A_i$ 
for all $i<j$. By Fact~\ref{pp}
, every formula appearing in $tp(a/A_i)$ is a boolean combination of cosets of $acl(\emptyset)$-definable groups. There is a smallest $acl(\emptyset)$-type-definable group $H_i$ such that the type $tp(a/A_i)$ contains the formulas defining $aH_i$. If $CB_\varphi^*(a/A_i)<CB_\varphi^*(aH_i)$ for some formula $\varphi$, then there is an $acl(\emptyset)$-definable subgroup $G_i$ with the formula defining $aG_i$ included in $tp(a/A_i)$, and $CB_\varphi^*(aG_i)<CB_\varphi^*(aH_i)$. This implies $CB_\varphi^*(G_i)<CB_\varphi^*(H_i)$ and contradicts the minimality of $H_i$. It follows that $tp(a/A_i)$ is a generic type of $aH_i$. Moreover, $aH_i$ is $A_i$-type-definable. For all $i<j$, the type $tp(a/A_j)$ forks over $A_i$ so there must be a formula $\varphi$ such that $aH_j$ and $aH_i$ have different stratified $\varphi$-ranks. Then, one has $CB_\varphi^*(H_i)<CB_\varphi^*(H_j)$ so $H_i$ and $H_j$ are non-commensurable groups.

\textit{(2) Let us now build $2^{\aleph_0}$ complete $1$-types over $\emptyset$.}

As the structure is stable, each $H_i$ is the intersection of $acl(\emptyset)$-definable groups $H_{ij}$. Let $\overline{H_{ij}}$ stand for the $\emptyset$-definable union of the conjugates of $H_{ij}$ under $Aut(\emptyset)$. Let us call $\widehat{H_i}$ the $\emptyset$-type-definable intersection of the $\overline{H_{ij}}$ over $j$. For every real number~$r$, we call $p'_r$ the partial type defining $\bigcap_{i\geq r} \widehat{H_i}$ and $p_r$ the following partial type $$p'_r\cup \{\psi:\psi\text{ formula over }\emptyset\text{ with }CB^*_\varphi(\neg\psi)<CB^*_\varphi(p'_r)\text{ for some }\varphi\}$$ Note that every formula $\psi$ in the second part of the type $p_r$ above is contained in every generic type of the structure (and also in the generic types of $\bigcap_{i\geq r} \widehat{H_i}$). Every $p_r$ is thus consistent. We claim that if $r\neq q$, then $p_r$ and $p_q$ are inconsistent. In any stable group, if $G_1,G_2,\dots$ are decreasing definable subgroups, for every formula $\varphi$, there is an index $i_\varphi$ such that the equality $CB^*_\varphi(\bigcap_{i\geq 1} {G_i})=CB^*_\varphi(G_j)$ holds for all $j>i_{\varphi}$. So one can find two indexes $i$ and $j$ (depending on $\varphi$) such that all the following equalities hold $$CB^*_\varphi(\bigcap_{i\geq r} \widehat{H_i})=CB^*_\varphi(\widehat{H_i})=CB^*_\varphi( \overline{H_{ij}})$$ $$CB^*_\varphi(\bigcap_{i\geq r} {H_i})=CB^*_\varphi({H_{i}})=CB^*_\varphi({H_{ij}})$$ As the stratified $\varphi$-ranks are preserved under automorphisms, and as the rank of a finite union equals the maximum of the ranks, we get $$CB^*_\varphi(\bigcap_{i\geq r} \widehat{H_i})=CB^*_\varphi(\bigcap_{i\geq r} {H_i}) $$ By point $(1)$, the groups $H_i$ are pairwise non-commensurable. It follows from the last equality that every pair of elements of the chain $(\widehat{H_i})_{i\in\mathbf{Q}}$ has at least one $CB^*_\varphi$-rank distinguishing them. So the types $(p_r)_{r\in\mathbf{R}}$ are pairwise inconsistent. We may complete them and build $2^{\aleph_0}$ complete $1$-types.\end{proof}

Note that if $G$ is a stable group with no dense forking chain, and $f$ a definable group morphism from $G$ to $G$ with finite kernel, then $G$ and $f(G)$ have same dimension and $U_\alpha$-rank. This follows from the second Lascar inequality applied to $G$ and $\ker G$. More generally~:

\begin{lem}\label{ff}Let $X$ and $Y$ be type-definable sets, and let $f$ be a definable map from $X$ onto $Y$ the fibres of which have no more than $n$ elements for some natural number~$n$. Then $X$ and $Y$ have the same dimension, and same $U_\alpha$-rank for every ordinal~$\alpha$.\end{lem}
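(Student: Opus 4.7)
The plan is to use that, for any small parameter set $A$ over which $X$, $Y$ and $f$ are all defined, and any $a \in X$ with image $b = f(a)$, one has $b \in dcl(Aa)$ because $f$ is a function, and $a \in acl(Ab)$ because the fibre $f^{-1}(b)$ has at most $n$ elements. With both containments in hand, the equalities $U_\alpha(a/A) = U_\alpha(b/A)$ and $dim(a/A) = dim(b/A)$ will fall out of the Lascar inequalities recalled just above, and the passage to the type-definable sets $X$ and $Y$ is a supremum argument using surjectivity of $f$.

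For the $U_\alpha$-rank, since $b \in dcl(Aa)$ the type $tp(b/Aa)$ is algebraic and $U_\alpha(b/Aa) = 0$, so the Lascar inequality
\[U_\alpha(b/Aa) + U_\alpha(a/A) \;\leq\; U_\alpha(ab/A) \;\leq\; U_\alpha(b/Aa) \oplus U_\alpha(a/A)\]
collapses to $U_\alpha(ab/A) = U_\alpha(a/A)$. Exchanging the roles of $a$ and $b$, the containment $a \in acl(Ab)$ gives $U_\alpha(a/Ab) = 0$, and the twin inequality yields $U_\alpha(ab/A) = U_\alpha(b/A)$. Hence $U_\alpha(a/A) = U_\alpha(b/A)$ for every ordinal $\alpha$. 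Taking the supremum over all complete types extending $X$, and noting that by surjectivity every complete type over $A$ extending $Y$ is realised as $tp(f(a)/A)$ for some $a\in X$, one concludes that $X$ and $Y$ have the same $U_\alpha$-rank.

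For the dimension the same strategy works: one redoes the two-line argument using $dim(b/Aa) = -1$ (since $b \in dcl(Aa)$ makes the extension trivially nonforking) and $dim(a/Ab) = -1$ (any algebraic extension is nonforking) in place of the vanishing of $U_\alpha$, or argues more abstractly that a bounded-to-one definable surjection induces a forking-preserving correspondence between complete types extending $X$ and complete types extending $Y$, which preserves the inductive definition of $dim$. I expect this dimension part to be the main obstacle, since the paper states the Lascar inequality only for the $U_\alpha$-rank, so one must either formulate and verify its analogue for $dim$, or justify the forking-preserving bijection between types by appealing directly to the interalgebraicity of $a$ and $b$ over $A$.
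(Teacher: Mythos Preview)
Your argument is correct. The interalgebraicity of $a$ and $b=f(a)$ over any defining set $A$ is indeed the heart of the matter, and your Lascar-inequality computation for $U_\alpha$ is valid.

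The paper, however, takes a more uniform route: it simply observes that for a type $q$ in $X$ and an extension $p$ of $q$, the extension $p\supset q$ is forking if and only if $f(p)\supset f(q)$ is forking. This is an immediate consequence of the interalgebraicity you already noted, and once it is in hand both the dimension and the $U_\alpha$-rank equalities follow at once, since both invariants are defined purely in terms of chains of forking extensions. In particular there is no need to invoke the Lascar inequalities at all, and the dimension case---which you flagged as the potential obstacle---is no harder than the rank case. Your second suggested approach for the dimension (``a forking-preserving correspondence'') is exactly what the paper does, and in fact suffices for the $U_\alpha$-rank as well.
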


\begin{proof}One just needs to notice that if $q$ is some type in $X$, and if $p$ is an extension of $q$, then $p$ is a forking extension of $q$ if and only if $f(p)$ is a forking extension of $f(q)$.\end{proof}

\begin{prop}\label{fff}Let $G$ be a stable group without dense forking chain, and let $f$ be a definable group morphism from $G$ to $G$. If $f$ has finite kernel, its image has finite index in $G$.\end{prop}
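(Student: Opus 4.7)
The plan is to combine Lemma \ref{ff} with the group version of the Lascar inequality already recorded in the paper, and then interpret the resulting vanishing rank.

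First, since $\ker f$ is finite, the surjection $f : G \to f(G)$ has all its fibres of constant finite size $|\ker f|$. Lemma \ref{ff} therefore applies and gives $U_\alpha(f(G)) = U_\alpha(G)$ for every ordinal $\alpha$. The absence of a dense forking chain, via Fact \ref{DFC}, ensures moreover that each $U_\alpha(G)$ is actually ordinal-valued rather than ``$\infty$''.

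Next I would apply the group-theoretic Lascar inequality to the definable subgroup $H = f(G)$:
$$U_\alpha(f(G)) + U_\alpha(G/f(G)) \leq U_\alpha(G).$$
Writing $\gamma = U_\alpha(G) = U_\alpha(f(G))$, which is an ordinal, the inequality reads $\gamma + U_\alpha(G/f(G)) \leq \gamma$. Strict monotonicity of ordinal addition in its right argument then forces $U_\alpha(G/f(G)) = 0$ for every $\alpha$, in particular for $\alpha = 0$.

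The main conceptual step --- and what I expect to be the real obstacle, since the paper does not spell it out --- is to turn $U_0(G/f(G)) = 0$ into honest finiteness of $G/f(G)$. Unpacking the definition of $U_0$, a rank of zero on a complete type $q$ means that $q$ admits no forking extension. But in any stable theory, a non-algebraic type admits a forking extension: namely, the algebraic type realising it over a larger parameter set is a forking extension of $q$. Hence the generic type of $G/f(G)$ must be algebraic, so the type-definable quotient group $G/f(G)$ is finite, i.e.\ $f(G)$ has finite index in $G$, as required.
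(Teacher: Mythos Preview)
Your overall strategy coincides with the paper's --- Lemma~\ref{ff} plus the group Lascar inequality --- but there is a gap in the passage to finiteness. You invoke Fact~\ref{DFC} to conclude that \emph{every} $U_\alpha(G)$ is ordinal-valued; that is a misreading. Fact~\ref{DFC} says that the absence of a dense forking chain through $p$ is equivalent to $U_\alpha(p)$ being ordinal for \emph{some} $\alpha$, not for all. The rank $U_0$ you then single out is precisely the ordinary Lascar $U$-rank (since $\dim(p/q)\geq 0$ just says that $q$ forks over $p$), and this is ordinal-valued exactly when $G$ is superstable. The hypothesis ``stable without dense forking chain'' is strictly weaker than superstability, so $U_0(G)$ may well equal $\infty$; the inequality $\infty + U_0(G/f(G)) \leq \infty$ is then vacuous and your deduction $U_0(G/f(G))=0$ does not follow. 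Your closing observation that $U_0(p)=0$ forces $p$ to be algebraic is correct in itself, but it now rests on an unproved premise.

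The paper does not privilege $\alpha = 0$. It runs the cancellation for every $\alpha$ and concludes via the \emph{dimension}: were $\dim(G/f(G))$ some ordinal $\beta \geq 0$, one would have $U_\beta(G/f(G)) \geq 1$ straight from the definition of dimension, so the vanishing of the $U_\alpha$-ranks forces $\dim(G/f(G)) = -1$, which is exactly the statement that the generic of the quotient is algebraic. In this framework it is the pair $(\dim, U_{\dim})$, not $U_0$ alone, that plays the role the Lascar rank plays in the superstable case, and the argument must be phrased accordingly.
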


\begin{proof}Let us write $H$ for $f(G)$, and let us apply the first Lascar equality. We get $U_\alpha(H)+U_\alpha(G/H)\leq U_\alpha(G)$. But $H$ and $G$ have the same $U_\alpha$-rank after Lemma~\ref{ff}. It follows that $U_\alpha(G/H)$ is zero. This holds for every ordinal $\alpha$, so $dim(G/H)$ is $-1$. This means that $G/H$ is finite.

\end{proof}

\begin{rmq}In Proposition~\ref{fff}, one cannot bound the index of the image of $f$ with the cardinal of its kernel. Consider for instance the superstable group $(\mathbf{Z},+)$, and the maps $f_n$ mapping $x$ to the $n$ times sum $x+\dots+x$, when $n$ ranges among natural numbers.\end{rmq}

\begin{cor}Every algebraic extension of an infinite weakly small field is Artin-Schreier closed.\end{cor}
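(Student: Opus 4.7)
In characteristic zero there are no Artin-Schreier extensions, so suppose the characteristic is $p>0$. My first step is a reduction to finite subextensions: if $L/K$ is algebraic and some $a\in L$ fails to be a value of $\wp$, then $a$ and a hypothetical preimage both lie in some finite subextension $F/K$, so it suffices to prove $\wp_F(F)=F$ for every finite $F/K$.

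Fix such an $F$ of degree $n$ over $K$, identify $(F,+)$ with $(K^n,+)$ via a $K$-basis, and observe that the Artin-Schreier map $\wp_F: F \to F$ becomes a $K$-definable additive endomorphism of $K^n$ whose kernel is the prime field $\mathbb{F}_p$, hence finite. Equipping $(F,+)$ with the abelian structure $\mathcal{A}$ whose predicates are all the $K$-definable additive subgroups of Cartesian powers of $F$ (in particular containing the graph of $\wp_F$), I would invoke Theorem~\ref{stabm} on $\mathcal{A}$ to conclude it has no dense forking chain, and then apply Proposition~\ref{fff} to the finite-kernel endomorphism $\wp_F$ to deduce that $[F : \wp_F(F)]$ is finite.

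Once $H := \wp_F(F)$ is a $K$-definable additive subgroup of $F = K^n$ of finite index, I would upgrade this to $H = F$ by induction on $n$. The base case $n=1$ is Theorem~\ref{con}. For the inductive step, project $H$ onto the first coordinate of $K^n$; the image is a $K$-definable additive subgroup of $K$ of finite index, and so equals $K$ by Theorem~\ref{con}. The intersection of $H$ with the kernel of this projection is a $K$-definable finite-index additive subgroup of $K^{n-1}$, equal to $K^{n-1}$ by induction. Combining gives $H = K^n = F$.

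The principal obstacle is the appeal to Theorem~\ref{stabm}: since $F$ is \emph{a priori} not weakly small as a field, and since pure 1-types in $\mathcal{A}$ correspond, via the interpretation of $F$ in $K$, to $n$-types of the weakly small $K$ which the definition of weak smallness does not constrain, it is not automatic that the abelian structure $\mathcal{A}$ satisfies the hypothesis of Theorem~\ref{stabm}. The real work lies in either establishing that $\mathcal{A}$ is weakly small in its own right, or adapting the proof of Theorem~\ref{stabm} to abelian structures merely \emph{interpretable} in a weakly small structure --- exploiting that pp-formulas over $F$ in $\mathcal{A}$ are built from bounded data lying in $K$, so that the construction of $2^{\aleph_0}$ complete $1$-types in the proof of Theorem~\ref{stabm} can be pushed down to contradict the weak smallness of $K$ itself.
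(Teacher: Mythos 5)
Your argument is, in substance, the paper's own: reduce to a finite subextension $F=K^n$, realise the Artin--Schreier map as a definable additive endomorphism with finite kernel, obtain finite index of its image from Theorem~\ref{stabm} together with Proposition~\ref{fff}, and then kill the finite index using Theorem~\ref{con} (the paper disposes of this last step with the one-line remark that no finite Cartesian power of $K$ has a proper definable subgroup of finite index, which is exactly your projection induction). The only point where you stall --- the applicability of Theorem~\ref{stabm} --- is not where any real work lies; it dissolves once the Abelian structure is set up with the right universe. Take the universe to be $K$ itself, not $F$, with predicates for the relevant definable subgroups of Cartesian powers of $K$, among them $F=K^n$ and the graph of the Artin--Schreier map inside $K^{2n}$. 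Then the hypothesis of Theorem~\ref{stabm} is satisfied on the nose: a $1$-type of this reduct over a subset $A$ of $K$ is a coarsening of a $1$-type of the weakly small field $K$ over $A$ together with the finitely many parameters used in the predicates, so there are countably many of them. The conclusion of Theorem~\ref{stabm} is a statement about the entire theory --- no dense forking chain in any arity --- precisely because its proof uses the Lascar inequalities to push the $n$-type case down to the $1$-type case; this is the content of the remark following that theorem. Hence every type concentrated on the definable group $K^n$ has ordinal $U_\alpha$-rank, and Proposition~\ref{fff} applies to $K^n$ and the Artin--Schreier endomorphism without further ado. In short, the ``adaptation'' you call for in your final paragraph is already built into Theorem~\ref{stabm}: you do not need $\mathcal{A}$ with universe $F$ to be weakly small, only $K$.
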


\begin{proof}Let $K$ be an infinite weakly small field of positive characteristic $p$, let $L$ be an algebraic extension of $K$ and $f$ the Artin-Schreier map from $L$ to $L$. We consider the additive structure of $L$, together with $f$~: it is an Abelian structure with weakly small universe $K$. It has no dense forking chain by Theorem~\ref{stabm}. The map $f$ has finite fibres so $f(L)$ has finite additive index in $L$ by Proposition~\ref{fff}. But $K$ has no proper definable additive subgroup of finite index by Theorem~\ref{con}, so neither has any finite Cartesian power of $K$, thus $f$ is onto.\end{proof}

\begin{cor}\label{degp}The degree of an algebraic extension of an infinite weakly small field of positive characteristic $p$ is not divisible by $p$.\end{cor}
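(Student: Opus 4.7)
The plan is to reduce the problem to the previous corollary, which asserts that every algebraic extension of an infinite weakly small field is Artin-Schreier closed. The translation between ``degree divisible by $p$'' and ``existence of an Artin-Schreier subextension'' is a standard Galois-theoretic manoeuvre.

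So suppose for contradiction that $L/K$ is a finite algebraic extension of an infinite weakly small field of characteristic $p$ with $p \mid [L:K]$. By Lemma~\ref{perf}, $K$ is perfect, so $L/K$ is separable; the Galois closure $L'$ of $L$ over $K$ is therefore a finite Galois extension, and since $[L:K]$ divides $[L':K]$ we have $p \mid |\mathrm{Gal}(L'/K)|$. By Cauchy's theorem the group $\mathrm{Gal}(L'/K)$ contains an element of order $p$, generating a cyclic subgroup $H$ of order $p$. Let $M = (L')^H$ be its fixed field. Then $L'/M$ is a cyclic Galois extension of degree $p$, hence by Fact~\ref{AS}(ii) an Artin-Schreier extension of $M$.

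Now $M$ is an intermediate field of the algebraic extension $L'/K$, so $M$ itself is an algebraic extension of the infinite weakly small field $K$. By the previous corollary, $M$ is Artin-Schreier closed, i.e. the map $x \mapsto x^p - x$ is surjective on $M$, so $M$ admits no Artin-Schreier extension. This contradicts the fact that $L'/M$ is such an extension.

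There is essentially no obstacle here: the whole content sits in the previous corollary, and the present statement is a purely Galois-theoretic packaging of it. The only thing one must be careful about is to start with a \emph{finite} extension so that Cauchy's theorem applies to its Galois group; if one states the corollary for possibly infinite algebraic extensions, one simply applies the above argument to any finite subextension whose degree would be divisible by $p$.
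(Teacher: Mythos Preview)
Your proof is correct and follows essentially the same route as the paper: use perfectness of $K$ to get separability, pass to a finite Galois (normal separable) extension whose degree is divisible by $p$, take a subgroup of order $p$ in the Galois group, and observe that the corresponding degree-$p$ extension of its fixed field is Artin-Schreier, contradicting the previous corollary. Your exposition is slightly more explicit (naming Cauchy's theorem, taking the Galois closure, and handling the infinite-degree case), but the argument is the same.
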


\begin{proof}Let $K$ be this infinite weakly small field~; it is perfect by Theorem~\ref{con}. If there is an algebraic extension the degree of which is divisible by $p$, there is also a normal separable extension $L$ of finite degree divisible by $p$. Its Galois group has a subgroup of order $p$, the invariant field of which we note $K_1$. The extension $L/K_1$ is an Artin-Schreier extension, a contradiction.\end{proof}

\begin{cor}\label{WS2}A weakly small field of characteristic two is either finite or algebraically closed.\end{cor}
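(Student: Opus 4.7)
My plan is to reduce the statement to Corollary~\ref{sol}'s prohibition of non-trivial solvable-by-radical extensions. Let $K$ be an infinite weakly small field of characteristic $2$ and suppose, for a contradiction, that $K$ is not algebraically closed. Since $K$ is perfect by Lemma~\ref{perf}, there is a finite Galois extension $L/K$ with $n = [L:K] > 1$, and by Corollary~\ref{degp} the integer $n$ is coprime to the characteristic~$2$, hence odd.

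With $n$ odd, the Feit-Thompson odd order theorem ensures that $\mathrm{Gal}(L/K)$ is solvable. I pick a subnormal series $\{e\} = H_r \triangleleft H_{r-1} \triangleleft \cdots \triangleleft H_0 = \mathrm{Gal}(L/K)$ whose successive quotients $H_i/H_{i+1}$ are cyclic of (odd) prime orders $q_i$, and set $K_i = L^{H_i}$. This yields a tower $K = K_0 \subset K_1 \subset \cdots \subset K_r = L$ in which each $K_{i+1}/K_i$ is cyclic Galois of prime degree $q_i \neq 2$. By Corollary~\ref{sol}, $K$ (and hence every $K_i$) contains every root of unity, so Fact~\ref{AS}(i) identifies each $K_{i+1}/K_i$ as a Kummer extension $K_i(b_i)$ with $b_i^{q_i} \in K_i$.

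Hence $L/K$ is solvable by radicals, but Corollary~\ref{sol} excludes any such non-trivial extension of $K$; this contradicts $n > 1$, so $K$ must be algebraically closed. The only genuine obstacle is producing the radical tower: weak smallness together with characteristic~$2$ does not immediately force $\mathrm{Gal}(L/K)$ to be solvable. The gap is bridged by Feit-Thompson via the oddness of the degree supplied by Corollary~\ref{degp}, and only the Kummer branch of Fact~\ref{AS} is ever invoked since every $q_i$ is odd---no Artin-Schreier step can enter the tower.
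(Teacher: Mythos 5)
Your proof is correct and takes essentially the same route as the paper, which also combines Corollary~\ref{degp} (odd degree), Feit--Thompson, and Corollary~\ref{sol}. The only difference is presentational: the paper states directly that the Galois group is not soluble "by Corollary~\ref{sol}" and invokes Feit--Thompson for the contradiction, whereas you unfold the implicit step, namely that a soluble Galois group of odd degree, over a base containing all roots of unity, gives a radical tower via Fact~\ref{AS}(i), contradicting Corollary~\ref{sol}. This is a useful clarification of exactly the same argument, not a different one.
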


\begin{proof}If it is infinite and not algebraically closed, it has a normal separable algebraic extension of finite degree. According to Corollaries~\ref{degp} and~\ref{sol}, its Galois group neither has even order, nor is soluble, a contradiction to Feit-Thomson's Theorem.\end{proof}


\section{Weakly small division rings}\label{WSDR}

Recall that a superstable division ring is a field \cite[Cherlin, Shelah]{CS}. The author has shown that a small division ring of positive characteristic is a field \cite{Mil}. It is still unknown whether this extends to weakly small division rings. In this section we show, at least, that every finitely generated algebraic closure in a weakly small division ring has finite dimension over its centre. With the previous section, this implies that a weakly small division ring of characteristic $2$ is a field.

From now on, let $D$ be an infinite weakly small division ring. 
If $K$ is a definable sub-division ring of $D$, one may view $D$ as a left or right vector space over $K$. However, we will not distinguish between the left and right $K$-dimension of $D$ thanks to~:

\begin{lem}If $K$ is a definable sub-division ring of $D$, and if $D$ has finite left or right dimension over $K$, then $D$ has finite right and left dimension over $K$, and those dimensions are the same. Moreover, there is a set which is both a left and right $K$-basis of $D$.\end{lem}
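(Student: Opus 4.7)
The plan is to assume $[D:K]_L = n < \infty$ (the right case being symmetric) and exhibit a left $K$-basis of $D$ that also serves as a right $K$-basis. Fix a left basis $e_1,\ldots,e_n$ of $D$ and let $R := e_1K + \cdots + e_nK$, a definable additive subgroup of $D$. Over a finite parameter set $A$ containing the $e_i$ and the parameters defining $K$, let $\Psi \colon D \to D$ be the definable additive endomorphism sending $d = \sum_i \alpha_i e_i$ (left-basis expansion) to $\sum_i e_i \alpha_i$. Its image is $R$ and its kernel consists of those $d$ whose left coefficients encode a right $K$-linear relation $\sum e_i k_i = 0$.

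The core step is $R = D$ when $\{e_i\}$ is right $K$-independent: $\ker\Psi = 0$, so by Lemma~\ref{fibresf}, $CB_A(R) = CB_A(D)$. The definable additive subgroup $R$ thus shares the ambient's local Cantor-Bendixson rank, and by compactness of the countable type space $S_1(A)$, infinite additive index of $R$ in $D$ would produce infinitely many disjoint cosets of $R$ of rank $CB_A(R)$ accumulating to a type of strictly greater Cantor rank, contradicting $CB_A(D) = CB_A(R)$. Hence $R$ has finite additive index and Theorem~\ref{con} forces $R = D$.

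For right-independence of $\{e_i\}$: if instead a nontrivial right relation $\sum e_i k_i = 0$ held, a maximal right-independent subfamily $\{e_{i_1}, \ldots, e_{i_m}\}$ of size $m < n$ would still right-span $R$, and the Cantor rank argument applied to this subfamily would yield $CB_A(R)$ equal to that of a subgroup of $D$ strictly smaller than $D$, hence $R \subsetneq D$. To exclude this, I use that the right regular representation $\rho\colon D \hookrightarrow \mathrm{End}_K(D)$, $\rho_d(x)=xd$, embeds $D$ into a ring of $K$-dimension $n^2$, whence $[D:K]_R < \infty$; running the symmetric argument starting from a right $K$-basis of minimal size $m$ then yields $[D:K]_L \leq m$, and together with $[D:K]_L = n$ forces $m \geq n$, while $\{e_1,\ldots,e_n\}$'s right-spanning of $R = D$ in the independent case forces $m \leq n$, so $m = n$. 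Thus $\{e_1, \ldots, e_n\}$ is right $K$-independent and serves as the common basis, with $[D:K]_L = [D:K]_R = n$.

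The principal obstacle is bridging equal local Cantor-Bendixson ranks and finite additive index in the weakly small setting: unlike $\omega$-stable theories where this is immediate via Morley rank and degree, the weakly small case requires the countability and compactness of $S_1(A)$ via the disjoint-coset accumulation argument sketched above. A secondary subtlety is the tight control of $[D:K]_R$ (from the embedding $\rho$) needed to close the right-independence argument, which requires careful combination with the weakly small descending chain condition of Theorem~\ref{dccm} applied to a chain of definable additive subgroups stratifying $D$.
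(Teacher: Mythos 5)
Your strategy --- take an arbitrary left $K$-basis $e_1,\dots,e_n$ and argue that it must also be a right basis --- has a genuine gap at the right-independence step. You invoke the right regular representation $\rho\colon D\hookrightarrow\mathrm{End}_K(D)$, $\rho_d(x)=xd$, to conclude $[D:K]_R<\infty$ on the grounds that $\mathrm{End}_K(D)$ has $K$-dimension $n^2$. Over a noncommutative $K$ this is false: fixing a left $K$-basis of $D$ identifies $\mathrm{End}_K(D)$ with $D^n$ as a right $K$-module, so its right $K$-dimension is $n\cdot[D:K]_R$, which is precisely the quantity you are trying to bound, and the argument is circular. Nor is this a repairable slip: there are division ring extensions $D/K$ with $[D:K]_L$ finite but $[D:K]_R$ infinite (Cohn, Schofield; see the very reference \cite{Co}), so no purely ring-theoretic argument can yield finiteness of the other side. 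The weak smallness of $D$ has to enter exactly at this point, and your proposal does not use it there.

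The paper's proof avoids the difficulty by not fixing a one-sided basis in advance. It chooses a family $f_1,\dots,f_n$ that is \emph{simultaneously} left and right $K$-free, with $n$ maximal (finite, since one of the two dimensions is finite). Since a group is never the union of two proper subgroups, maximality forces the right span $F_r$ or the left span $F_l$ to equal $D$; say $F_r=D$. The definable additive map $\sum f_ik_i\mapsto\sum k_if_i$ is then well defined, injective by left-freeness, hence surjective by Lemma~\ref{fibresf}, so $F_l=D$ as well and $\{f_i\}$ is the common basis. This places the model-theoretic input (an injective definable additive endomorphism is onto) exactly where it is needed, and also sidesteps a secondary weakness of your approach: there is no reason an arbitrary left basis should be right-independent, whereas the maximality device guarantees the chosen family works on both sides. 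A minor further imprecision in your ``core step'' is that passing from $CB_A(R)=CB_A(D)$ to finite additive index via ``accumulating cosets'' tacitly assumes each coset of $R$ has rank $CB_A(R)$ over $A$, which is not immediate since coset representatives need not be algebraic over $A$; the cleaner route is to use the degree inequality of Lemma~\ref{fibresf} directly to obtain surjectivity.
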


\begin{proof}Let $f_1,\dots,f_n$ be a left and right $K$-free family from $D$, with $n$ maximal. Let $F_r$ and $F_l$ be the set of respectively right and left linear $K$-combinations of the $f_i$. If $F_r< D$ and $F_l<D$, then $F_r\cup F_l< D$, a contradiction with $n$ being maximal. So suppose that $D$ equal $F_r$. The group homomorphism from $D^+$ mapping a right decomposition $\sum_{i=1}^{n} f_ik_i$ to $\sum_{i=1}^{n} k_if_i$ is a definable embedding, hence surjective after Lemma~\ref{fibresf}. Thus $F_l$, $F_r$ and $D$ are equal.\end{proof}




\begin{prop}\label{infi}The centre of an infinite weakly small division ring is infinite.\end{prop}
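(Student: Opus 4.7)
The plan is to argue by contradiction: suppose $Z := Z(D)$ is finite. Since $Z$ contains the prime subfield of $D$, the characteristic of $D$ must be positive; say $\mathrm{char}(D) = p$ and $Z \cong \mathbb{F}_q$. Then $D$ cannot be commutative (otherwise $D = Z$ would be finite), and by Jacobson's classical theorem on periodic division rings, some $d \in D$ must be transcendental over $\mathbb{F}_p$. Indeed, if every element of $D$ were algebraic over $\mathbb{F}_p$, it would sit in a finite subfield of $D$ and so every element of $D^\times$ would have finite multiplicative order, forcing $D$ to be commutative.

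Fix such a transcendental $d$. The centralizer $C_D(d)$ is a $d$-definable sub-division ring of $D$, and its centre $F := Z(C_D(d))$ is a $d$-definable commutative subfield of $D$. Since $d \in F$ and $d$ is transcendental over $\mathbb{F}_p$, the subring $\mathbb{F}_p[d] \cong \mathbb{F}_p[t]$ is contained in $F$, so $F$ is infinite. As a definable subset of the weakly small $D$, $F$ inherits weak smallness. Corollary~\ref{sol} now applies and yields that $F$ (and a fortiori $C_D(d)$) contains every root of unity, hence an isomorphic copy of $\overline{\mathbb{F}_p}$ inside $D$.

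To derive the desired contradiction I would then vary the transcendental parameter: for every $e \in D$ transcendental over $\mathbb{F}_p$, the same construction places a copy of $\overline{\mathbb{F}_p}$ inside $C_D(e)$. Combining these copies one produces a sub-division ring of $D$ which is invariant under all inner automorphisms; by the Brauer--Cartan--Hua theorem it must lie in $Z$ or coincide with $D$. Since $D$ is non-commutative, it must lie in $Z$, forcing $|Z| = \infty$, a contradiction.

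The main obstacle is the final amalgamation step: the various copies of $\overline{\mathbb{F}_p}$ produced inside different centralizers $C_D(d)$ are a priori distinct, and one has to reconcile them to obtain an infinite inner-automorphism-invariant sub-division ring. This is where the Cantor--Bendixson rank machinery and the fact from Theorem~\ref{con} that no proper definable sub-division ring of $D$ has finite additive index should enter, to control how these copies sit inside $D$ and to ensure that their invariant amalgamation remains infinite.
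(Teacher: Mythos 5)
Your opening two paragraphs are essentially the paper's own setup: reduce to positive characteristic, use local finiteness/Jacobson to extract an element $d$ of infinite multiplicative order (equivalently, transcendental over $\mathbb{F}_p$), observe that $Z(C_D(d))$ is an infinite weakly small field, and apply Corollary~\ref{sol} to see it contains every root of unity, hence a copy of $\overline{\mathbb{F}_p}$. Up to that point you and the paper agree.

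The gap you flag at the end is genuine, and it is exactly where the real work lies. You propose to produce an infinite sub-division ring $K\neq D$ that is invariant under all inner automorphisms and then apply Brauer--Cartan--Hua, but no such $K$ is actually constructed. The natural candidate --- the sub-division ring generated by all roots of unity of $D$ --- is indeed conjugation-invariant and infinite, but nothing in the argument rules out $K=D$: in general a non-commutative division ring can perfectly well be generated by finite-order elements (already $i,j$ generate $\mathbb{H}$), so ``generated by roots of unity'' does not force $K$ to be proper. The appeal to Theorem~\ref{con} about subgroups of finite index does not bear on this, since the issue is not the size of $K$ but its properness; and amalgamating the various copies $\overline{\mathbb{F}_p}\subseteq Z(C_D(d))$ for varying $d$ likewise gives no control.

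The paper's proof avoids this entirely. Instead of Brauer--Cartan--Hua it invokes Herstein's lemma on non-central roots of unity \cite[Lemma 3.1.1]{Her}: if a root of unity $a$ of prime order $q$ is not central, there is $x$ with $xax^{-1}=a^n\neq a$; one shows $x$ must have infinite order (else $\langle a,x\rangle$ would generate a finite, hence commutative, division ring), then $x^{q-1}$ centralises $a$, and a second application of Corollary~\ref{sol} to the infinite weakly small field $Z(C(x^{q-1}))$ shows $x$ lies in it, forcing $a$ and $x$ to commute --- a contradiction. This directly shows that all the roots of unity already found are central, which makes $Z(D)$ infinite with no amalgamation needed. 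To complete your argument you would need to replace the Brauer--Cartan--Hua step by some concrete device of this kind.
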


\begin{proof}We may assume that $D$ has non-zero characteristic, as this obviously holds in zero characteristic. We may also assume that $D$ is not locally finite and has an element $b$ of infinite order. It follows from Corollary~\ref{sol} that $Z(C(b))$ contains every root of $1$. We claim that all those roots are in $Z(D)$. Suppose not, and let $a$ be non central with $a^q=1$ and $q$ a prime number. According to a lemma of Herstein~\cite[Lemma 3.1.1]{Her}, there exists a natural number $n$ and an $x$ in $D$ with $xax^{-1}=a^n$ but $a^n\neq a$ . If $x$ has finite order, the division ring generated by $x$ and $a$ is finite, a contradiction to Wedderburn's Theorem. So $x$ has infinite order. Conjugating $q-1$ times by $x$, we get $x^{q-1} ax^{-{q+1}}=a^{n^{q-1}}=a$. Note that $x^{q-1}$ has infinite order, so $Z(C(x^{q-1}))$ contains $x$ by Corollary~\ref{sol}. It follows that $a$ and $x$ commute, a contradiction.\end{proof}

\begin{cor}\label{puiss}An element and a power of it have the same centraliser.\end{cor}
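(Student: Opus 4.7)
The inclusion $C(a)\subseteq C(a^n)$ is immediate, so the task is the reverse. I would split on the multiplicative order of $a$. If $a$ has finite order, it is a root of unity (or zero). When $D$ is locally finite, Wedderburn's theorem forces $D$ to be commutative, as already noted in the proof of Theorem~\ref{con}, and the statement is trivial; otherwise $D$ contains an element of infinite order and the proof of Proposition~\ref{infi} has already shown that every root of unity of $D$ lies in $Z(D)$. In both subcases $C(a)=D=C(a^n)$, so one may assume from now on that $a$ has infinite order, whence $a^n$ has infinite order as well for every $n\geq 1$.

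The core of the argument is to apply Corollary~\ref{sol} to the field $K:=Z(C(a^n))$. The centraliser $C(a^n)$ is definable over the single parameter $a^n$, hence weakly small; it contains the infinite cyclic subgroup generated by $a$, so it is an infinite weakly small division ring. Proposition~\ref{infi} then ensures that its centre $K$ is an infinite field; being definable in $D$, it is itself weakly small. Now $a^n$ belongs to $K$ by construction, and $a$ lies in $C(a^n)$, so $a$ commutes with every element of $K$. Hence $K[a]$ is a commutative subring of the division ring $C(a^n)$; because $a$ annihilates the monic polynomial $X^n-a^n\in K[X]$, the ring $K[a]=K(a)$ is a field extension of $K$ generated by one $n$th root of an element of $K$. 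By the definition recalled before Fact~\ref{AS}, this is a solvable-by-radical extension of $K$. But Corollary~\ref{sol} forbids $K$ from possessing any such proper extension, so $K(a)=K$, i.e.\ $a\in Z(C(a^n))$. This means that $a$ commutes with every element of $C(a^n)$, giving $C(a^n)\subseteq C(a)$.

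The main delicate point is to verify that $K=Z(C(a^n))$ genuinely fits the hypotheses of Corollary~\ref{sol}. Infiniteness is secured by the presence of the subgroup $\langle a^n\rangle$ inside $K$, which does not presuppose the desired conclusion $a\in K$. Weak smallness of $K$ transfers automatically from $D$ because $K$ is a definable subset over the parameter $a^n$, so $1$-types in $K$ over a finite set $A$ embed into the countable space $S_1^D(A\cup\{a^n\})$. Once these two points are in hand, the conclusion is simply an instance of the solvability obstruction already exploited in Section~\ref{WSF}.
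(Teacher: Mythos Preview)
Your argument is correct and follows the same route as the paper's: one shows that $K=Z(C(a^n))$ is an infinite weakly small field and then applies Corollary~\ref{sol} (the absence of solvable-by-radical extensions) to conclude $a\in K$, whence $C(a^n)\subseteq C(a)$. Your preliminary case split on the order of $a$ is unnecessary, however: $C(a^n)$ always contains $Z(D)$, which is infinite by Proposition~\ref{infi} applied to $D$ itself, so $C(a^n)$ is an infinite weakly small division ring and $K$ is infinite (again by Proposition~\ref{infi}) regardless of the order of~$a$; with this observation your main paragraph already handles every case, which is exactly how the paper's two-line proof proceeds.
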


\begin{proof}Let $a$ be in $D$. We obviously have $C(a)\leq C(a^n)$. Conversely, by Proposition~\ref{infi}, the field $Z(C(a^n))$ is infinite. Corollary~\ref{sol} implies that it contains $a$.\end{proof}

\begin{rmq}It follows that every element having finite order lies in the centre. Similarly, for every non constant polynomial $P$ with coefficients in the centre having a soluble Galois group, $P(a)$ and $a$ have the same centraliser in $D$. If $D$ is in addition small, this holds for every non constant polynomial with coefficients in the centre.\end{rmq}




\begin{cor}\label{Cent}Let $\overline{a}$ be some finite tuple in $D$. The sets $acl(\overline{a})$, $dcl(\overline{a})$ and $\overline{a}$ have the same centraliser in $D$.\end{cor}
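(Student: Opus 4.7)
The plan is to prove the only nontrivial inclusion $C(\overline{a}) \subseteq C(acl(\overline{a}))$: the reverse inclusions $C(acl(\overline{a})) \subseteq C(dcl(\overline{a})) \subseteq C(\overline{a})$ come for free from $\overline{a} \subseteq dcl(\overline{a}) \subseteq acl(\overline{a})$.

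Fix a non-zero $c \in C(\overline{a})$ and an arbitrary $b \in acl(\overline{a})$; the goal is to show $c \in C(b)$. Pick an $\overline{a}$-formula $\varphi(x)$ with finite solution set $X$ containing $b$. The inner conjugation $\phi_c \colon x \mapsto c x c^{-1}$ is a ring automorphism of $D$ fixing $\overline{a}$ pointwise; since $X$ is cut out by a formula with parameters in $\overline{a}$, the automorphism $\phi_c$ preserves $X$ and hence permutes its finitely many elements. The orbit of $b$ under the cyclic group generated by $\phi_c$ is therefore finite, of some size $m \geq 1$, so $c^m b c^{-m} = b$, equivalently $c^m \in C(b)$.

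Corollary~\ref{puiss} asserts that $C(c)$ and $C(c^m)$ coincide, so $b \in C(c^m) = C(c)$, meaning $c$ and $b$ commute. As $b$ was arbitrary in $acl(\overline{a})$, this gives $c \in C(acl(\overline{a}))$.

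The main step is Corollary~\ref{puiss}: it supplies exactly the descent from \emph{some power of $c$} centralising $b$ to $c$ itself centralising $b$. Without this descent, the orbit argument would only produce a finite index subgroup of powers of $c$ centralising each $b \in acl(\overline{a})$. A minor point to verify is that the isolating formula for $b$ is preserved by $\phi_c$, which is automatic for any formula in the ring language, $\phi_c$ being a ring automorphism fixing the parameters.
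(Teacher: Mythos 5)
Your proposal is correct and follows essentially the same route as the paper: both observe that the conjugates of $b$ under powers of $c$ all realise the algebraic formula isolating $b$ over $\overline{a}$ (conjugation being an automorphism fixing $\overline{a}$), deduce that some power $c^m$ commutes with $b$, and then descend from $c^m$ to $c$ via Corollary~\ref{puiss}. The point you flag about the isolating formula being preserved is the same implicit step the paper takes.
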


\begin{proof}The inclusions $C(acl(\overline{a}))\leq C(dcl(\overline{a}))\leq C(\overline{a})$ are easy. Conversely, suppose $x$ commutes with $\overline{a}$ and let $y$ be in $acl(\overline{a})$. For every natural number $m$, the elements $y^{x^m}$ and $y$ are conjugated by the action of the automorphisms group fixing $\overline{a}$ pointwise. So there must be two distinct natural numbers $n$ and $m$ so that $y^{x^n}$ and $y^{x^m}$ be equal~: $y$ commutes with a power of $x$, hence with $x$ by Corollary~\ref{puiss}.
\end{proof}

\begin{lem}Let $\gamma$ stand for the conjugation map by some $a$ in $D$. For all $\lambda$ in $D$, the kernel of $\gamma-\lambda.id$ is a $C(a)$-vector space having dimension at most $1$.\end{lem}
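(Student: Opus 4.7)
The plan is to unpack what $\ker(\gamma-\lambda.\mathrm{id})$ is, verify it carries a right $C(a)$-vector space structure, and then show any two non-zero elements in it are right $C(a)$-proportional by exhibiting an explicit scalar.

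First I would note that $\ker(\gamma-\lambda.\mathrm{id})=\{x\in D:axa^{-1}=\lambda x\}=\{x\in D:ax=\lambda xa\}$. To see it is stable under right multiplication by $c\in C(a)$, one just computes
\[
a(xc)a^{-1}=(axa^{-1})(aca^{-1})=\lambda x\cdot c=\lambda(xc),
\]
so $xc$ lies in the kernel. Closure under addition is immediate, giving the right $C(a)$-vector space structure. Note that left multiplication by elements of $C(a)$ does \emph{not} in general preserve the kernel because $\lambda$ need not commute with $C(a)$; this is why the natural scalars act on the right.

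For the dimension bound I would take two non-zero $x,y$ in the kernel and try to express $y$ as $xc$ with $c\in C(a)$. The candidate is $c=x^{-1}y$ (which makes sense since $D$ is a division ring). To check that $c\in C(a)$, I first invert the relation $axa^{-1}=\lambda x$ to get $ax^{-1}a^{-1}=x^{-1}\lambda^{-1}$, and then compute
\[
a(x^{-1}y)a^{-1}=(ax^{-1}a^{-1})(aya^{-1})=x^{-1}\lambda^{-1}\cdot\lambda y=x^{-1}y.
\]
Hence $x^{-1}y\in C(a)$, and $y=x\cdot(x^{-1}y)$ is a right $C(a)$-multiple of $x$. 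So any two non-zero vectors are $C(a)$-colinear, and the dimension is at most $1$.

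There is no real obstacle here: once one realises the scalar action has to be on the right (because conjugation by $a$ scales $x$ by $\lambda$ on the left, so only right multiplications commute cleanly with the eigenvalue relation), the proof is a two-line computation exploiting that $D$ is a division ring so that $x^{-1}$ is available whenever $x\neq 0$.
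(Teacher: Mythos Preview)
Your argument is correct and is essentially the same as the paper's: the paper simply takes two non-zero $x,y$ in the kernel and observes that $(y^{-1}x)^a=y^{-1}x$, i.e.\ $y^{-1}x\in C(a)$, which is exactly your computation with the roles of $x$ and $y$ swapped. Your added verification that the $C(a)$-action must be on the right is a useful clarification that the paper leaves implicit.
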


\begin{proof}Let some non zero $x$ and $y$ be in the kernel of $\gamma-\lambda.id$. The equalities $x^a=\lambda x$ and $y^a=\lambda y$ yield $(y^{-1}x)^{a}=y^{-1}x$.\end{proof}

\begin{lem}\label{dimf}In a weakly small division ring of positive characteristic, for all $a$, every finitely generated algebraic closure $ \Gamma$ containing $a$ is a finite dimensional $C_{\Gamma}(a)$-vector space.\end{lem}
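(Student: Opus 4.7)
Plan: I would study the additive action of the $\overline{b}$-definable inner derivation $\delta \colon \Gamma \to \Gamma$ defined by $\delta(x) = ax - xa$, where $\overline{b}$ is a tuple with $\Gamma = \mathrm{acl}(\overline{b})$. Then $\delta$ is additive, left $C_\Gamma(a)$-linear, and has kernel exactly $C_\Gamma(a)$; the companion map is the $\overline{b}$-definable inner automorphism $\gamma(x) = axa^{-1}$ of the division ring $\Gamma$, with fixed subring $C_\Gamma(a)$.

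First, I would use positive characteristic to show that all iterates of $\delta$ share the same kernel. Induction on Leibniz's rule in characteristic $p$ gives $\delta^{p^k}(x) = a^{p^k}x - xa^{p^k}$, so $\ker \delta^{p^k} = C_\Gamma(a^{p^k}) = C_\Gamma(a)$ by Corollary~\ref{puiss}, and therefore $\ker \delta^n = C_\Gamma(a)$ for every $n \ge 1$. Applying Theorem~\ref{dccm} to the descending chain of $\overline{b}$-definable additive subgroups $\delta^n(\Gamma)$, it stabilises at some $V := \delta^N(\Gamma)$; a Fitting-style argument based on the stability of the kernels yields a direct-sum decomposition $\Gamma = C_\Gamma(a) \oplus V$ of left $C_\Gamma(a)$-modules, and $\delta$ restricts to an additive bijection of $V$.

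The main obstacle is then to prove that $V$ is finite-dimensional over $C_\Gamma(a)$ (one may even hope $V = 0$). Here the preceding lemma is the key tool: for $\lambda \in C_\Gamma(a)$ the identity $(\gamma - \lambda)^{p^k} = \gamma^{p^k} - \lambda^{p^k}$ --- valid in characteristic $p$ because $\lambda$ and $\gamma$ commute as additive endomorphisms --- identifies the generalised $\lambda$-eigenspace of $\gamma$ with an honest eigenspace of $\gamma^{p^k}$, the conjugation by $a^{p^k}$, which by the preceding lemma applied to $a^{p^k}$ is at most one-dimensional over $C_\Gamma(a^{p^k}) = C_\Gamma(a)$. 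The plan is then to cover $V$ by finitely many such generalised eigenspaces, and I expect the core of the remaining work to lie in producing the required finiteness --- either through a further application of Theorem~\ref{dccm} to a chain of $\overline{b}$-definable $\gamma$-invariant subspaces of $V$, or through an induction on the local Cantor-Bendixson rank of $\Gamma$ using Lemma~\ref{CBacl} --- which would guarantee that only finitely many eigenvalues occur and close the proof.
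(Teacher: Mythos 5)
Your setup tracks the paper's: the map you call $\delta$ (the paper uses $f(x) = x^a - x$, which has the same kernel and a shifted image) has kernel $C(a)$, and your observation that characteristic $p$ together with Corollary~\ref{puiss} forces $\ker\delta^n = C_\Gamma(a)$ for all $n$ is exactly how the paper shows $\ker f = \ker f^2$. Two caveats, though. First, stability of the kernel chain alone does not give a Fitting decomposition; one also needs $\delta$ to be surjective onto its eventual image, and in any case $\delta^n(\Gamma)$ is not definable, so Theorem~\ref{dccm} (which stabilises traces of chains of \emph{definable} groups) does not apply to it directly. The paper works in $D$ instead: the induced map on $D^+/C(a)$ is injective, its image has finite index by Lemma~\ref{fibresf}, and that index must be $1$ by Theorem~\ref{con}, giving $D = \ima f \oplus \ker f$ at once and then $\Gamma = (\ima f\cap\Gamma)\oplus C_\Gamma(a)$ by intersection.

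Second, and more seriously, the finite-dimensionality of $V$ is the crux of the lemma and your eigenspace plan is circular: generalised eigenspaces of $\gamma$ span $\Gamma$ over $C_\Gamma(a)$ only if $\gamma$ is an algebraic operator over $C_\Gamma(a)$, i.e.\ only if $a$ is already algebraic over $C_\Gamma(a)$ --- which is essentially what you are trying to establish, so ``covering $V$ by finitely many eigenspaces'' cannot be had for free. The paper supplies the missing finiteness by an ideal trick rather than an eigenvalue count. Writing $H = \ima f$, each translate $\lambda H\cap\Gamma$ for $\lambda\in\Gamma^\times$ has codimension $1$ as a right $C_\Gamma(a)$-subspace of $\Gamma$ (left multiplication by $\lambda$ being a right-$C_\Gamma(a)$-linear bijection of $\Gamma$). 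The intersection $I = \bigcap_{\lambda\in\Gamma}\lambda H\cap\Gamma$ is a finite intersection of, say, $n$ such translates by Theorem~\ref{dccm}, hence of codimension at most $n$; but $I$ is stable under left multiplication by $\Gamma$, so it is a left ideal of the division ring $\Gamma$, and it is proper since $I\subseteq H\cap\Gamma$ misses $a$. Hence $I = 0$ and $\dim_{C_\Gamma(a)}\Gamma\le n$. It is this intersection-of-translates-is-an-ideal step, not a spectral decomposition, that turns the corank-$1$ information about $H$ into a global dimension bound.
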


\begin{proof}We write $f$ for the endomorphism mapping $x$ to $x^a - x$. Let $K$ and $H$ stand for the kernel and the image of $f$ respectively. Note that $f$ is not onto, as otherwise there would be some $x$ verifying $x^a=x+1$ and $x^{a^p}=x+p=x$, a contradiction to Lemma~\ref{puiss}. Let $\tilde{f}$ be the restriction of $f$ from $D^+/K$ to $D^+/K$. The set $H$ is a $K$-vector space so the intersection $H\cap K$ is an ideal of $K$, which must be trivial $f$ is not onto. The map $\tilde{f}$ is injective hence surjective, so we get $D=H\oplus K$. This yields $$\Gamma=H\cap \Gamma\oplus K\cap \Gamma$$ The intersection $I$ of the sets $\lambda H\cap  \Gamma$ where $\lambda$ runs over $\Gamma$ is a finite intersection, of size $n$ say~: it is a left ideal of $\Gamma$, hence zero. But $H\cap \Gamma$ is a $K\cap \Gamma$-vector space having codimension $1$, so $I$ has codimension at most $n$.\end{proof}



\begin{thm}A weakly small division ring of positive characteristic is locally finite dimensional over its centre.\end{thm}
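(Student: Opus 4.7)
Let $\Gamma = acl(\overline{a})$ be a finitely generated algebraic closure in $D$, with $\overline{a} = (a_1, \ldots, a_n)$. The goal is to prove $[\Gamma : Z(\Gamma)] < \infty$. By Corollary~\ref{Cent}, $Z(\Gamma) = C_\Gamma(\Gamma) = C_\Gamma(acl(\overline{a})) = C_\Gamma(\overline{a}) = \bigcap_{i=1}^{n} C_\Gamma(a_i)$, a finite intersection of centralisers. Since each $C_\Gamma(a_i)$ has finite codimension in $\Gamma$ by Lemma~\ref{dimf}, the theorem reduces to showing that a finite intersection of sub-division rings of finite codimension in $\Gamma$ is again of finite codimension.

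I would prove this by induction on the number of terms, reducing via the tower formula for division-ring extensions to the sub-claim: if $K \subseteq \Gamma$ is a sub-division ring with $[\Gamma : K] < \infty$ and $a \in \Gamma$, then $[K : K \cap C_\Gamma(a)] < \infty$. Iterating this sub-claim along the descending chain $\Gamma \supseteq C_\Gamma(a_1) \supseteq C_\Gamma(a_1) \cap C_\Gamma(a_2) \supseteq \cdots \supseteq Z(\Gamma)$ and multiplying codimensions then gives the desired $[\Gamma : Z(\Gamma)] < \infty$.

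For the sub-claim, the strategy is to mimic the proof of Lemma~\ref{dimf} itself. The additive Artin--Schreier map $f_a(x) = x^a - x$ has kernel $C_\Gamma(a)$ and its restriction to $K$ is an additive map with kernel $K \cap C_\Gamma(a)$, whose image lies inside the additive complement $f_a(\Gamma)$ of $C_\Gamma(a)$ in $\Gamma$. One then intersects the $K$-conjugates of $f_a(K)$ inside $\Gamma$, invokes the weakly small descending chain condition (Theorem~\ref{dccm}) to reduce this to a finite intersection, and finishes with the ideal-vanishing argument of the last step of Lemma~\ref{dimf}'s proof to bound the codimension of $K \cap C_\Gamma(a)$ in $K$. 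The main obstacle is precisely this sub-claim: since the Poincar\'{e}-style closure under finite intersections fails for sub-division rings of finite codimension under pure algebra (the relevant additive indices being infinite), the weakly small descending chain condition is the essential model-theoretic input needed to close the argument.
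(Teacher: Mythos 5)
Your reduction to showing $[\Gamma:Z(\Gamma)]<\infty$ via Corollary~\ref{Cent} and $Z(\Gamma)=\bigcap_i C_\Gamma(a_i)$ is correct, and you have correctly located the crux in the sub-claim. The gap is in its proposed proof. Mimicking Lemma~\ref{dimf} with $K$ in place of $\Gamma$ fails for a structural reason: the conjugation $x\mapsto x^a$, hence the map $f_a$, does not restrict to an endomorphism of $K$ when $a\notin K$; and in your iteration $a=a_{i+1}$ is precisely an element that need \emph{not} lie in $K=C_\Gamma(a_1,\dots,a_i)$, because the generators $a_j$ of $\Gamma$ need not commute. Consequently $f_a(K)$ lands outside $K$, the decomposition $\Gamma=(H\cap\Gamma)\oplus C_\Gamma(a)$ does not trace onto $K$, and $\bigcap_{\lambda\in K}\lambda f_a(K)$ is merely a left $K$-module inside $\Gamma$, not a left ideal of $\Gamma$, so the ``left ideal of a division ring, hence zero'' step of Lemma~\ref{dimf} has no analogue. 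In effect, bounding $\dim_{K\cap C_\Gamma(a)}f_a(K)$ already presupposes $[C_\Gamma(a):K\cap C_\Gamma(a)]<\infty$, which is what you are trying to prove; the Poincar\'e-style intersection difficulty you flag is real and your sketch does not close it.

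The paper avoids the issue by never intersecting centralisers of non-commuting elements. It builds a \emph{nested} chain of centralisers $\Gamma>D_1\cap\Gamma>\cdots>D_{n+1}\cap\Gamma$ in which each new element is chosen from inside the previous centraliser; then the conjugation does restrict at each step, Lemma~\ref{dimf} relativises, and each extension is finite. By the chain condition~\ref{dccm} the chain is finite, and its bottom $D_{n+1}\cap\Gamma$ is a \emph{commutative} subfield of finite codimension --- but not $Z(\Gamma)$ itself. The essential last ingredient, which your plan has no counterpart for, is the classical inequality $[\Gamma:Z(\Gamma)]\leq[\Gamma:M]^2$ for a maximal subfield $M$ of a division ring, quoted from \cite[Corollary 2 p.49]{Co}; this is how finite codimension of a commutative subfield converts into finite dimension over the centre. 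To repair your argument you should replace the chain $C_\Gamma(a_1)\supseteq C_\Gamma(a_1,a_2)\supseteq\cdots$ by a nested chain of centralisers of pairwise-commuting elements and invoke the maximal-subfield bound at the end, rather than trying to intersect down to $Z(\Gamma)$ directly.
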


\begin{proof}Let $\Gamma$ be finitely generated algebraic closure, and $D_0,\dots D_{n+1}$ a maximal chain of centralisers of elements in $\Gamma$ such that the chain $$ \Gamma>D_1\cap \Gamma>\cdots>D_n\cap \Gamma> D_{n+1}\cap \Gamma$$ be properly descending, and $D_{n}\cap \Gamma$ be minimal non commutative. The fields extensions $D_i\cap \Gamma/D_{i+1}\cap \Gamma$ are finite by Lemma~\ref{dimf}. As $D_{n+1}\cap \Gamma$ is a field, $\Gamma$ has finite dimension over its centre, bounded by $[\Gamma:D_{n+1} \cap \Gamma]^2$ according to \cite[Corollary 2 p.49]{Co}.\end{proof}

\begin{cor}\label{com}A small division ring of positive characteristic is a field.\end{cor}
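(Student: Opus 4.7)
The plan is to show that every finitely generated algebraic closure $\Gamma=acl(\bar a)$ inside $D$ is commutative. This will suffice, since for any $x,y\in D$, both elements lie in $acl(x,y)$, which by the claim is commutative, forcing $xy=yx$.

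Fix a finite tuple $\bar a$ and set $\Gamma=acl(\bar a)$. Being closed under all definable operations (including inversion on non-zero elements), $\Gamma$ is a sub-division ring of $D$. If $\Gamma$ is finite, then Wedderburn makes it a field. If $\Gamma$ is infinite but locally finite, each pair of its elements generates a finite sub-division ring, which is again a finite field by Wedderburn, so $\Gamma$ is commutative in this case too.

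In the remaining case $\Gamma$ is infinite and contains an element of infinite order. As a substructure of the small structure $D$, $\Gamma$ inherits smallness. Proposition~\ref{infi} applied to $\Gamma$ yields that $Z(\Gamma)$ is infinite, and the main theorem of this section, applied to $D$, gives that $\Gamma$ has finite dimension over $Z(\Gamma)$. The centre $Z(\Gamma)$ is a subfield of the small division ring $\Gamma$, hence itself a small infinite field, and is therefore algebraically closed by Corollary~\ref{wa}. For any $x\in\Gamma$, the commutative subring $Z(\Gamma)[x]$ is an integral domain of finite dimension over $Z(\Gamma)$, hence a finite field extension of the algebraically closed $Z(\Gamma)$, which forces $Z(\Gamma)[x]=Z(\Gamma)$ and so $x\in Z(\Gamma)$. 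Thus $\Gamma=Z(\Gamma)$ is commutative.

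The only step beyond a direct invocation of previous results is the observation that smallness passes from $D$ to the substructure $\Gamma$, and then to its subfield $Z(\Gamma)$: this is a routine matter of noting that $n$-types of a substructure over $\emptyset$ refine to $n$-types of the ambient structure over $\bar a$, so they are no more numerous than those in $D$. No serious obstacle is expected; the whole argument is essentially a book-keeping of the corollaries already proved, glued together by the standard fact that a finite-dimensional division algebra over an algebraically closed centre equals its centre.
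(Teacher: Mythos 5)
Your overall strategy---reduce to showing $\Gamma=acl(\bar a)$ is commutative by exhibiting it as finite dimensional over a relatively algebraically closed centre---is the same as the paper's, and the closing algebraic step (a finite dimensional integral domain over an algebraically closed field $k$ equals $k$) is fine. The gap is the claim that smallness passes from $D$ to the substructure $\Gamma$, which you describe as routine. Your justification bounds only the $n$-types of $Th(\Gamma)$ that are \emph{realized in $\Gamma$}: if $\bar c,\bar d\in\Gamma^n$ have the same $D$-type over $\bar a$ then indeed they have the same $\Gamma$-type, since an automorphism of a saturated elementary extension of $D$ over $\bar a$ restricts to a ring automorphism of $acl(\bar a)$. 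But smallness of $Th(\Gamma)$ requires counting \emph{all} complete $n$-types of $Th(\Gamma)$, including the ones realized only in proper elementary extensions of $\Gamma$; since $\Gamma$ is not an elementary substructure of $D$ and quantifiers in formulas evaluated in $\Gamma$ relativize to $\Gamma$, the theory $Th(\Gamma)$ has no a priori relation to $Th(D)$, and nothing forces $S_n(Th(\Gamma))$ to be countable. Consequently neither Proposition~\ref{infi} nor Corollary~\ref{wa} may be applied to $\Gamma$ or to $Z(\Gamma)$ as structures in their own right, and there is no theorem in the paper (nor one I know of) asserting that $acl(\bar a)$ in a small structure is again small.

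The paper circumvents this entirely and you should too. By Corollary~\ref{Cent}, $C(\Gamma)=C(\bar a)$, so $Z(\Gamma)=Z(C(\Gamma))\cap\Gamma=Z(C(\bar a))\cap\Gamma$. Now $Z(C(\bar a))$ is a \emph{definable} subfield of the ambient small structure $D$, hence a small field; it is infinite because $Z(D)\subseteq Z(C(\bar a))$ and $Z(D)$ is infinite by Proposition~\ref{infi} applied to $D$ (one may assume $D$ is not locally finite, else Wedderburn finishes as in your second paragraph). Corollary~\ref{wa} therefore makes $Z(C(\bar a))$ algebraically closed, so $Z(\Gamma)$ is relatively algebraically closed in $\Gamma$. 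This is all your last paragraph needs: for $x\in\Gamma$, the ring $Z(\Gamma)[x]$ is a finite integral extension of $Z(\Gamma)$ lying inside $\Gamma$, hence equals $Z(\Gamma)$ by relative algebraic closedness, and $\Gamma=Z(\Gamma)$. Replacing the unsupported appeal to smallness of $\Gamma$ by this use of Corollary~\ref{Cent} repairs the argument.
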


\begin{proof}Let $\Gamma$ be the algebraic closure of a finite tuple $\overline a$. By Corollary~\ref{Cent}, we have $$Z(\Gamma)=Z(C(\Gamma))\cap \Gamma=Z(C(\overline{a}))\cap\Gamma$$  By \cite{W1}, $Z(C(\overline{a}))$ is algebraically closed. It follows that $Z(\Gamma)$ is relatively algebraically closed in $\Gamma$, so a small division ring is locally commutative, hence commutative.\end{proof}

\begin{cor}A weakly small division ring of characteristic $2$ is a field.\end{cor}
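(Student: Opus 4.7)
The strategy is to adapt the proof of Corollary~\ref{com} by substituting Corollary~\ref{WS2} for Wagner's result on the algebraic closedness of $Z(C(\overline{a}))$. Let $D$ be a weakly small division ring of characteristic $2$, and let $\Gamma = \mathrm{acl}(\overline{a})$ be any finitely generated algebraic closure in $D$. It suffices to show that every such $\Gamma$ is commutative: indeed, for any $a,b\in D$, both lie in $\mathrm{acl}(a,b)$, which would then be commutative, forcing $ab=ba$.

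The first ingredient is the theorem just proved: $\Gamma$ is finite-dimensional over its centre $Z(\Gamma)$. The second is the observation, following the proof of Corollary~\ref{com} and using Corollary~\ref{Cent}, that $Z(C(\overline{a})) = Z(C(\Gamma))$ is a $\overline{a}$-definable subfield of $D$. As a definable subset of a weakly small structure, $Z(C(\overline{a}))$ is weakly small in the induced structure, hence also as a pure field (a reduct). Being of characteristic $2$, Corollary~\ref{WS2} applies: $Z(C(\overline{a}))$ is either finite or algebraically closed.

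In the finite case, $Z(\Gamma)\subseteq Z(C(\overline{a}))$ is itself finite, so $\Gamma$, being finite-dimensional over a finite field, is finite and hence commutative by Wedderburn's theorem. In the algebraically closed case, the identity $Z(\Gamma)=Z(C(\overline{a}))\cap \Gamma$ (verified exactly as in Corollary~\ref{com}) shows that $Z(\Gamma)$ is relatively algebraically closed in $\Gamma$: any $x\in \Gamma$ algebraic over $Z(\Gamma)$ is \emph{a fortiori} algebraic over $Z(C(\overline{a}))$, hence lies in that algebraically closed field, and so in $Z(C(\overline{a}))\cap \Gamma = Z(\Gamma)$. Combined with the fact that finite-dimensionality of $\Gamma$ over $Z(\Gamma)$ makes every element of $\Gamma$ algebraic over $Z(\Gamma)$, this forces $\Gamma = Z(\Gamma)$.

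The only new ingredient beyond the Corollary~\ref{com} template is Corollary~\ref{WS2}, whose dichotomy introduces a finite case absent from the small setting. The main delicate point is precisely the handling of that finite case: it is resolved only because the preceding theorem on local finite-dimensionality guarantees that a finite $Z(C(\overline{a}))$ forces $\Gamma$ itself to be finite, letting Wedderburn take over.
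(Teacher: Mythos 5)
Your proof is correct and matches the paper's intent exactly: the paper's one-line proof (``Follows from Corollary~\ref{WS2} with the same proof as Corollary~\ref{com}'') is precisely the substitution you carry out, and your expansion of the details---using Corollary~\ref{Cent} for $Z(\Gamma)=Z(C(\overline{a}))\cap\Gamma$ and the local finite-dimensionality theorem to pass from relative algebraic closedness to commutativity---is sound. The only inessential addition is your explicit treatment of the finite branch, which in fact cannot occur: $C(\overline{a})$ contains $Z(D)$, hence is an infinite weakly small division ring, so by Proposition~\ref{infi} its centre $Z(C(\overline{a}))$ is always infinite and the dichotomy of Corollary~\ref{WS2} lands on the algebraically closed side.
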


\begin{proof}Follows from Corollary~\ref{WS2} with the same proof as Corollary~\ref{com}.\end{proof}

\begin{cor}Vaught's conjecture holds for the pure theory of a positive characteristic division ring.\end{cor}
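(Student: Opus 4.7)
The plan is to run a dichotomy on the size of the space of types of $T$, where $T$ denotes a complete theory in the language of rings whose models are division rings of a given positive characteristic $p$. Vaught's conjecture asserts that the number of countable models of $T$, up to isomorphism, is either at most $\aleph_0$ or equal to $2^{\aleph_0}$; I will show that exactly one of the two alternatives obtains, according as $T$ is small or not.

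Suppose first that $T$ is not small. Then some $S_n(\emptyset)$ is uncountable. Since the language of rings is finite, each $S_n(\emptyset)$ is a compact Hausdorff space with a countable basis, so Cantor-Bendixson forces $|S_n(\emptyset)|=2^{\aleph_0}$. As every countable model realises only countably many complete $n$-types over $\emptyset$, at least $2^{\aleph_0}$ pairwise non-isomorphic countable models are needed to exhaust $S_n(\emptyset)$, and this many suffice (there are at most $2^{\aleph_0}$ countable structures in a countable language).

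Suppose now that $T$ is small. Then every model of $T$ is small, and in particular is a small division ring of positive characteristic $p$. By Corollary~\ref{com}, every model of $T$ is commutative, so $T$ is a theory of fields. Applying Corollary~\ref{wa} to any model $D$ of $T$, we see that $D$ is either finite or algebraically closed. In the first case $T=\mathrm{Th}(\mathbb{F}_q)$ for some finite field $\mathbb{F}_q$, which admits a single countable model up to isomorphism; in the second case $T=\mathrm{ACF}_p$, whose countable models are classified by their transcendence degree over $\mathbb{F}_p$, yielding countably many countable models. Either way, $T$ has at most $\aleph_0$ countable models, and Vaught's conjecture holds.

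The whole argument reduces to a bookkeeping exercise on top of the structural results of the preceding sections; the only slightly subtle point is that smallness of $T$ transfers to every model of $T$, allowing Corollaries~\ref{com} and~\ref{wa} to be applied uniformly rather than just to the initially given division ring.
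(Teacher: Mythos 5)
Your proof is correct and follows essentially the same route as the paper: not-small forces $2^{\aleph_0}$ countable models (via types), while small forces, through Corollary~\ref{com} and Corollary~\ref{wa}, that the theory is that of a finite field or $\mathrm{ACF}_p$, each of which has at most countably many countable models. You spell out the not-small case and the finite case a bit more explicitly than the paper does, but the argument is the same.
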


\begin{proof}If the theory of an infinite pure division ring has fewer than $2^{\aleph_0}$ denumerable models, it is small~: it is the theory of a algebraically closed field, which has countably many denumerable models as noticed in \cite{W1}.\end{proof}

In positive characteristic, we can just say the following~:

\begin{prop}If $D$ is small, let $a$ be outside the centre, and write $\gamma$ for the conjugation by $a$. For all non-zero polynomial $a_nX^n+\dots+a_1X+a_0$ with coefficients in the centre of $D$, the morphism $a_n\gamma^n+\dots+a_1\gamma+a_0Id$ is onto.\end{prop}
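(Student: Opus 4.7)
The plan is to factor $P$ into linear factors via Wagner's algebraic closure theorem for $Z(C(a))$ in the small case, reduce to showing $\gamma - \lambda\mathrm{Id}$ is onto for each root $\lambda$, and then invoke Proposition~\ref{decom}.

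Since $D$ is small, the field $Z(C(a))$ is algebraically closed by the result of Wagner from \cite{W1} used in the proof of Corollary~\ref{com}. The coefficients of $P$ lie in $Z(D) \subseteq Z(C(a))$, so $P(X) = a_n \prod_{i=1}^{n}(X - \lambda_i)$ with each $\lambda_i$ in $Z(C(a))$. Each $\lambda_i$ commutes with $a$, hence is fixed by $\gamma$, so the factors $\gamma - \lambda_i\mathrm{Id}$ commute pairwise and $P(\gamma) = a_n(\gamma - \lambda_1\mathrm{Id})\circ\cdots\circ(\gamma-\lambda_n\mathrm{Id})$. Since multiplication by the central unit $a_n$ is a bijection of $D$, it suffices to show that $\gamma - \lambda\mathrm{Id}$ is onto for every $\lambda \in Z(C(a))$.

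Fix such a $\lambda$ and put $g = \gamma - \lambda\mathrm{Id}$. Because $\lambda$ lies in the centre of $C(a)$, $g$ is a definable, both left and right $C(a)$-linear endomorphism of $D^+$. The lemma preceding Lemma~\ref{dimf} gives $\dim_{C(a)} \ker g \leq 1$; since $g$ induces an injection $\ker g^{k+1}/\ker g^k \hookrightarrow \ker g$, an easy induction yields $\dim_{C(a)} \ker g^k \leq k$ for all $k \geq 1$. As $D^+$ is a small group and $g$ is definable, Proposition~\ref{decom} will provide an integer $N$ with $\ker g^N + \ima g^N = D$. Since $\ima g \supseteq \ima g^N$, this reduces the problem to showing $\ima g^N = D$.

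The hard part will be exactly this last equality: the quotient $D / \ima g^N$ is a $C(a)$-module of dimension at most $N$, but finite $C(a)$-codimension does not translate directly into finite additive index, so Theorem~\ref{con} cannot be invoked as is. To close the gap I would analyze a putative nonzero $y \in \ker g$: iterating $\gamma(y) = \lambda y$ yields $a^k y a^{-k} = \lambda^k y$ for all $k \in \mathbf{Z}$. If $\lambda$ has finite multiplicative order $m$, then $y \in C(a^m) = C(a)$ by Corollary~\ref{puiss}, which forces $\lambda = 1$; in the infinite-order case, conjugation by $y$ sends $a$ to $\lambda a$, and Corollary~\ref{sol} applied to $Z(C(y))$ gives strong constraints on $\gamma$ and on the chain $\ker g^k$. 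In either case, combining this structural information with Theorem~\ref{con} applied to an appropriate finite union of additive translates of $\ima g^N$ should force $\ima g^N = D$ and hence the surjectivity of $g$.
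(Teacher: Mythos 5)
Your reduction is correct and matches the paper's: factor $P$ over the algebraically closed centre, observe that the factors commute, reduce to the surjectivity of $g=\gamma-\lambda\mathrm{Id}$, note that $\ker g$ is at most a line over $K=C(a)$, and invoke Proposition~\ref{decom} to get $D=\ker g^m+\ima g^m$ for some $m$. Up to this point the two arguments coincide.

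But the final step — showing $\ima g^m=D$ — is left genuinely open in your proposal, and you say so yourself. The sub-argument you sketch about a nonzero $y\in\ker g$ (finite-order $\lambda$ forcing $y\in C(a)$, infinite-order $\lambda$ giving constraints on $Z(C(y))$) does not by itself close the gap, and ``applying Theorem~\ref{con} to an appropriate finite union of additive translates of $\ima g^m$'' is not the mechanism the paper uses; indeed, as you observe, finite $K$-codimension is not finite additive index, so Theorem~\ref{con} does not apply to $\ima g^m$ directly. The paper's actual closing argument is different in kind: after replacing $L=\ker g^m$ by a definable complement of $H=\ima g^m$ so that $D=L\oplus H$, it takes a hypothetical $t\notin H$ and forms a finitely generated algebraic closure $\Gamma$ containing $t$, $a$, a witness $b$ with $ab\neq ba$, and a $K$-basis of $L$. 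Then $\Gamma=(L\cap\Gamma)\oplus(H\cap\Gamma)$, and the local descending chain condition (Theorem~\ref{dccm}) makes $\bigcap_{\lambda\in\Gamma}\lambda H\cap\Gamma$ a finite intersection; it is a left ideal of the division ring $\Gamma$ missing $t$, hence zero. Since $H\cap\Gamma$ has finite $C_\Gamma(a)$-codimension, so does this zero intersection, whence $[\Gamma:C_\Gamma(a)]<\infty$. By the Cohn corollary this equals $[Z(\Gamma)(a):Z(\Gamma)]$, and by Corollary~\ref{Cent} the field $Z(\Gamma)=Z(C_D(\Gamma))\cap\Gamma$ is relatively algebraically closed in $\Gamma$, so $a\in Z(\Gamma)$ — contradicting that $b\in\Gamma$ fails to commute with $a$. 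You would need to supply an argument of this local, rank-and-chain-condition flavour (or an equivalent one) to complete the proof; the case analysis on the order of $\lambda$ is not a substitute.
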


\begin{proof}Let $K$ be the field $C(a)$. As $Z(D)$ is algebraically closed, $P$ splits over $Z(D)$. As a product of surjective morphisms is still surjective, it suffices to show the result for some irreducible $P$. Let $\lambda$ be in the centre, let $f$ be the morphism $\gamma-\lambda.id$, and let $t$ be outside the image of $f$. The map $f$ is a $K$-linear map~; its kernel must be a line or a point. According to Proposition~\ref{decom}, we get $D=\ker f^m+\ima f^m$ for some natural number $n$. Set $H$ the image of $f^m$, and $L$ its kernel. Note that $L$ has finite $K$-dimension. We may replace $L$ by a definable summand of $H$, and assume that $L$ and $H$ be disjoints. Let $\Gamma$ an infinite finitely generated algebraic closure containing $t$, $a$, some $b$ which does not commute with $a$, and the $K$-basis of $L$. We still have $$\Gamma=L\cap \Gamma\oplus H\cap \Gamma$$
The intersection $I$ of the sets $\lambda H\cap \Gamma$, where $\lambda$ runs over $\Gamma$ is a finite intersection by Theorem~\ref{dccm}~: it is an ideal of $\Gamma$ which does not contain $t$, hence zero. But $H\cap \Gamma$ has finite $K\cap \Gamma$-codimension, hence so has $I$. According to \cite[Corollary 2 p.49]{Co}, we have $$[\Gamma:K\cap \Gamma]=[\Gamma:C_\Gamma(a)]=[Z(\Gamma)(a):Z(\Gamma)]<\infty$$ But $Z(\Gamma)$ is nothing more than $Z(C_D(\Gamma))\cap \Gamma$. By Corollary~\ref{Cent}, $Z(C_D(\Gamma))$ is an algebraically closed field so $a$ belongs to $Z(\Gamma)$, a contradiction.\end{proof}

\begin{cor}In a small field, the conjugation by any element generates a central division algebra.\end{cor}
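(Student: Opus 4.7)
The plan is to derive this directly from the preceding Proposition, using it to see that conjugation $\gamma$ is transcendental over $Z(D)$ inside the ring of additive endomorphisms of $D$. First, if $a$ lies in $Z(D)$ then $\gamma$ is the identity map and $Z(D)[\gamma]=Z(D)$, which is already a field and thus trivially a central division algebra over $Z(D)$; so I may assume from the outset that $a\notin Z(D)$.

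The key observation is that, by the preceding Proposition, for every non-zero polynomial $P\in Z(D)[X]$ the additive endomorphism $P(\gamma)$ of $D$ is surjective, hence non-zero in the ring $\mathrm{End}(D^+)$. It follows that the $Z(D)$-algebra evaluation morphism $Z(D)[X]\to\mathrm{End}(D^+)$ sending $X$ to $\gamma$ is injective, so the commutative subalgebra $Z(D)[\gamma]$ of $\mathrm{End}(D^+)$ is isomorphic as $Z(D)$-algebra to the polynomial ring $Z(D)[X]$. In particular it is an integral domain.

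The final step is then to pass to its field of fractions $Z(D)(\gamma)$, which is a field containing $Z(D)$, and which is the (commutative) division algebra generated over $Z(D)$ by the single endomorphism $\gamma$. The main obstacle, such as there is one, is just the recognition that ``surjective for every non-zero $P$'' from the preceding Proposition is exactly the transcendence of $\gamma$ over $Z(D)$; once this is seen, producing a division algebra reduces to the standard construction of the field of fractions of a commutative domain, with no further appeal to small-type or Cantor-rank machinery.
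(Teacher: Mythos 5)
Your core observation is right, and since the paper states this corollary without proof there is nothing in the paper to compare against; but your last step glosses over a real difficulty. What the Proposition gives you is that $P(\gamma)$ is onto, hence nonzero, for every nonzero $P\in Z(D)[X]$, so the evaluation map $Z(D)[X]\to\mathrm{End}(D^+)$ is injective and $Z(D)[\gamma]\cong Z(D)[X]$ is a domain; that is doubtless the intended content, and you derive it correctly.

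The gap is in ``pass to the field of fractions.'' That field does not sit anywhere: in $\mathrm{End}(D^+)$ the element $\gamma-1\in Z(D)[\gamma]$ has kernel exactly $C(a)$, which is nonzero, while the Proposition (applied to $P=X-1$) makes it onto; so $\gamma-1$ is surjective but not injective, hence not invertible in $\mathrm{End}(D^+)$. (As a byproduct this forces $D$ to be infinite-dimensional over $C(a)$, which is exactly why surjective does not give invertible here.) Consequently there is \emph{no} division subalgebra of $\mathrm{End}(D^+)$ containing $\gamma$, and ``the division algebra generated by $\gamma$'' cannot be read as a concrete subalgebra of endomorphisms. Your $Z(D)(\gamma)$ is only the abstract fraction field of $Z(D)[\gamma]$, i.e.\ $Z(D)(X)$: a purely transcendental, commutative extension of $Z(D)$, so neither finite over $Z(D)$ nor with centre equal to $Z(D)$. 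Under this reading the corollary collapses to ``$Z(D)[\gamma]$ is a domain,'' which trivialises both ``generates'' and ``central.'' You should at least say explicitly that the field of fractions is taken abstractly, that it is $Z(D)(X)$, that no realisation inside $\mathrm{End}(D^+)$ is possible because of $\gamma-1$, and that ``central'' can here only mean ``containing $Z(D)$ centrally'' rather than ``with centre $Z(D)$.''
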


\section{Small difference fields}

It is a common phenomenon to weakly small and superstable groups, and to stable groups without dense forking chain, that when a definable group homomorphism has a somehow small kernel, its image has to be somehow big. See \cite[Proposition 1.7]{Poi2}, or \cite[Corollary 6]{W2}. As for a definable endomorphism of a small field, we have the following~:

\begin{prop}\label{endosurj}Let $K$ be a small infinite field, $F$ a definable subfield, and $f$ a non trivial $F$-linear endomorphism of $K$, the kernel of which has finite $F$-dimension. Then $f$ is onto.\end{prop}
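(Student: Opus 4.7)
Set $H=\ima f$; the goal is to show $H=K$. The plan is to first show that $H$ has finite $F$-codimension in $K$, and then upgrade this to $H=K$ by combining the ideal-theoretic structure of $K$ with a descending chain argument.

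The additive group $K^{+}$ is small, so by Proposition~\ref{decom} there exists $n$ with $K=\ker f^{n}+\ima f^{n}$. An easy induction using the $F$-linear injection $\ker f^{k}/\ker f^{k-1}\hookrightarrow\ker f$ induced by $f^{k-1}$ shows that $\dim_{F}\ker f^{n}\leq n\dim_{F}\ker f<\infty$, whence $K/\ima f^{n}$, and a fortiori $K/H$, is a finite-dimensional $F$-space, of dimension, say, $d\geq 0$. If $d=0$ we are done, so assume $d\geq 1$ for a contradiction. For every $\lambda\in K^{\times}$ the set $\lambda H$ is a definable $F$-subspace of $K$ of the same $F$-codimension $d$, so the intersection $I=\bigcap_{\lambda\in K^{\times}}\lambda H$ is closed under multiplication by $K$ (for $x\in I$ and $\mu,\lambda\in K^{\times}$ one has $\mu x\in\mu\lambda H=(\mu\lambda)H$); hence $I$ is a two-sided ideal of the field $K$, so $I=\{0\}$ or $I=K$. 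The case $I=K$ gives $H=K$, contradicting $d\geq 1$.

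To rule out $I=\{0\}$, I would pass to a finitely generated algebraic closure. For any $k\in K$, let $\Gamma=\mathrm{acl}(\bar a,k)$, where $\bar a$ is a finite tuple over which $f$, $F$ and $H$ are definable; $\Gamma$ is then an $f$-invariant subfield of $K$. By Theorem~\ref{dccm}, the intersection $J=\bigcap_{\lambda\in\Gamma^{\times}}(\lambda H\cap\Gamma)$ stabilises at a finite subintersection and is, by the same computation as for $I$, an ideal of the field $\Gamma$, so $J=\{0\}$ or $J=\Gamma$. The alternative $J=\Gamma$ yields $k\in\Gamma\subseteq H$, and since $k$ was arbitrary, $H=K$.

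The main obstacle, and where the smallness of $K$ (rather than mere weak smallness) is expected to be crucial, is to exclude $J=\{0\}$: in that case the diagonal map $\Gamma\to\bigoplus_{i=1}^{N}K/\lambda_{i}H$ is injective and each factor is $F$-linearly isomorphic to $F^{d}$, producing an additive $(F\cap\Gamma)$-linear embedding $\Gamma\hookrightarrow F^{Nd}$. My plan for the contradiction is to pick $k$ generic over $\bar a$, so that $\Gamma$ attains maximal local Cantor-Bendixson rank, and then compare this rank with the one inherited from the embedding, which is bounded in terms of the rank of $F$ via Lemma~\ref{fibresf}. Combining the local almost-stabiliser argument of Proposition~\ref{fini} applied to $H$ inside $\mathrm{dcl}(\bar a,k)\cap K$ with Theorem~\ref{con} applied to the infinite weakly small subfield $\mathrm{dcl}(\bar a,k)\cap K$ should then force $\mathrm{dcl}(\bar a,k)\subseteq H$, and in particular $k\in H$, delivering the desired contradiction with $J=\{0\}$.
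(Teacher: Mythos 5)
Your set-up matches the paper's: invoke Proposition~\ref{decom} to get $K=\ker f^{m}+\ima f^{m}$, bound $\dim_{F}\ker f^{m}$, observe that $H=\ima f$ has finite $F$-codimension, pass to a finitely generated algebraic closure $\Gamma$, and look at the ideal $J=\bigcap_{\lambda\in\Gamma^{\times}}(\lambda H\cap\Gamma)$, which is a finite subintersection and is either $0$ or $\Gamma$. Up to that point the two arguments coincide. The problem is the last step, which you explicitly leave as a plan rather than a proof, and the plan does not go through as stated.

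The concrete gap is the bound on $\dim_{F\cap\Gamma}\Gamma$ when $J=\{0\}$. Your embedding $\Gamma\hookrightarrow F^{Nd}$ is only $(F\cap\Gamma)$-linear into a space that is $F$-finite-dimensional, not $(F\cap\Gamma)$-finite-dimensional; since $F$ can be enormous over $F\cap\Gamma$, this does not bound $\dim_{F\cap\Gamma}\Gamma$, and the planned Cantor--Bendixson rank comparison against $F$ via Lemma~\ref{fibresf} has no obvious way to yield a contradiction. Likewise, Proposition~\ref{fini} and Theorem~\ref{con} concern additive or multiplicative index, not $F$-codimension, so they do not apply to $H$ unless $F$ is finite (in which case $H$ has finite additive index and the proof is immediate). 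The paper instead (i) replaces $\ker f^{m}$ by a \emph{definable} complement $L$ of $H$ and requires $\Gamma$ to contain a fixed $F$-basis $\overline{b}$ of $L$, so that $L\cap\Gamma$ is $(F\cap\Gamma)$-spanned by $\overline{b}$ and hence $\lambda H\cap\Gamma$ has finite $F\cap\Gamma$-codimension in $\Gamma$ for each $\lambda\in\Gamma$; then $J=\{0\}$ forces $\Gamma$ to be a finite, hence algebraic, extension of $\Gamma\cap F$. (ii) It then uses Corollary~\ref{wa}: $K$, $\Gamma$ and $F$ are all algebraically closed fields, so $\Gamma\cap F$ is algebraically closed as well, giving $\Gamma=\Gamma\cap F\subseteq F$; letting $\Gamma$ grow yields $K=F$. (iii) With $K=F$, an $F$-linear endomorphism of $K$ is multiplication by a scalar, so nontriviality forces $\ker f=0$ and Lemma~\ref{fibresf} gives surjectivity. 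It is precisely these three ideas — the basis of the complement inside $\Gamma$, the algebraic closedness chain via Corollary~\ref{wa}, and the reduction to $K=F$ — that are missing from your sketch.

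Incidentally, the place where smallness (rather than weak smallness) enters is not the closing rank comparison you anticipate, but Proposition~\ref{decom} for the decomposition $K=\ker f^{m}+\ima f^{m}$ and Corollary~\ref{wa} for the algebraic closedness of $K$ and $F$.
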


\begin{proof}
By Proposition~\ref{decom}, the equality $K=\ker f^m+\ima f^m$ holds for some natural number $m$. Let $H$ be the image of $f^m$ and $L$ its kernel. Note that if $n$ is the dimension of $\ker f$ over $F$, then $\ker f^m$ has dimension at most $nm$ over $F$. We may replace $L$ by a definable supplement of $H$ in $K$, an suppose that $L$ and $H$ are disjoint. If $F$ is finite, so is $L$. It follows that $H$ equals $K$, and $f$ is onto. So let us suppose $F$ infinite. Let $\Gamma$ be a finitely generated algebraic closure containing an $F$-basis $\overline{b}$ of $L$. As $L$ and $H$ are disjoint, we get $$\Gamma=L\cap \Gamma\oplus H\cap \Gamma$$ where $L\cap \Gamma$ is a finite dimensional $F\cap\Gamma$-vector space. The intersection $I$ of the sets $\lambda H\cap \Gamma$, where $\lambda$ runs over $\Gamma$ is a finite intersection. It is an ideal of $\Gamma$. Note that this holds for every finitely generated algebraic closure $\Gamma$ containing $\overline{b}$. If $I=\Gamma$ for every such $\Gamma$, then $f$ is surjective. So we may assume that $I$ is zero for every sufficiently large $\Gamma$. But $H\cap \Gamma$ has finite $F\cap \Gamma$-codimension, and so has $I$. It follows that $\Gamma$ is an algebraic extension of $\Gamma\cap F$. But $\Gamma\cap F$ is algebraically closed as $\Gamma$ and $F$ are. Hence $F$ contains $\Gamma$. As this holds for every large enough $\Gamma$, the field $F$ and $K$ are equal. If $\ker f$ has $F$-dimension $1$, then $f$ is trivial, a contradiction. So $\ker f$ is zero, and $f$ is onto by Lemma~\ref{fibresf}.\end{proof}


Let $K$ be a small infinite field, together with a definable field morphism $\sigma$. We call $F$ the subset of points in $K$ fixed by $\sigma$. It is a definable subfield of $K$ hence either finite or algebraically closed. The kernel of $\sigma$ being an ideal of $K$, either $\sigma$ is zero, or it is injective hence surjective by Lemma~\ref{fibresf}.

\begin{lem}\label{truc}For all polynomial $P$ with coefficients in $K$ and degree $n$, the kernel of $P(\sigma)$ is an $F$-vector space having dimension at most $n$.\end{lem}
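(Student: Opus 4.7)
The plan is to proceed by induction on $n = \deg P$, peeling off a right factor of degree one from $P(\sigma)$ at each step, the factor being dictated by a chosen non-zero element of $\ker P(\sigma)$. The case $n = 0$ is trivial, so assume the result for degrees less than $n$ and set $V = \ker P(\sigma)$. If $V = 0$ there is nothing to prove, so I would pick a non-zero $x_0 \in V$ and let $\alpha = \sigma(x_0)/x_0$. Set $U = \sigma - \alpha \cdot \mathrm{id}$: this is an $F$-linear endomorphism of $K^+$ since $\sigma$ fixes $F$. If $y_1, y_2$ are non-zero elements of $\ker U$, then $\sigma(y_1/y_2) = \alpha y_1 / \alpha y_2 = y_1/y_2$, so $y_1/y_2 \in F$; hence $\ker U = F x_0$, of $F$-dimension one.

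Next I would work in the twisted polynomial ring $K[X;\sigma]$, defined by the relation $Xc = \sigma(c) X$ for $c \in K$. There is a ring homomorphism $K[X;\sigma] \to \mathrm{End}_{\mathbf{Z}}(K^+)$ sending $X$ to the operator $\sigma$ and each $c \in K$ to multiplication by $c$, so products in $K[X;\sigma]$ correspond to composition of the associated additive operators on $K$. Since $X - \alpha$ is monic, right Euclidean division yields $P(X) = Q(X)(X-\alpha) + r$ with $\deg Q = n-1$ and $r \in K$. Evaluating the corresponding operator identity at $x_0$ gives $0 = P(\sigma)(x_0) = Q(\sigma)(U(x_0)) + r x_0 = r x_0$, whence $r = 0$ and $P(\sigma) = Q(\sigma) \circ U$.

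It follows that $U$ sends $V$ into $\ker Q(\sigma)$. Its restriction $U|_V$ is $F$-linear with kernel $V \cap \ker U = F x_0$, and the induction hypothesis gives $\dim_F \ker Q(\sigma) \leq n-1$. Rank-nullity then yields $\dim_F V \leq 1 + (n-1) = n$, as required. The one step that really demands care is the skew-polynomial identity $P(X) = Q(X)(X-\alpha)$ in $K[X;\sigma]$ together with the verification that multiplication there matches composition of the corresponding operators on $K^+$; once this is in place the argument is a clean inductive peeling in which each right factor $X-\alpha$ accounts for precisely a one-dimensional slice $Fx_0$ of the kernel.
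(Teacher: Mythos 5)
Your proof is correct, but it takes a genuinely different route from the paper's. The paper passes through the Casoratian: given $n+1$ solutions $x_0,\dots,x_n$ of the (monicised) equation $\sigma^n(x)+\sum_{i<n}a_i\sigma^i(x)=0$, the $(n+1)\times(n+1)$ determinant with entries $\sigma^i(x_j)$ vanishes because its last row is the indicated $K$-combination of the earlier ones, and then a cited lemma of Cohn (Difference Algebra, Lemma~II p.~271) converts vanishing of the Casoratian into $F$-linear dependence of the $x_j$. Your argument instead works in the twisted polynomial ring $K[X;\sigma]$ with $Xc=\sigma(c)X$ and peels off a degree-one right factor $X-\alpha$, $\alpha=\sigma(x_0)/x_0$, verified to annihilate exactly the line $Fx_0$; the operator identity $P(\sigma)=Q(\sigma)\circ(\sigma-\alpha\cdot\mathrm{id})$ then reduces the bound to degree $n-1$ by rank--nullity. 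The two approaches are the two classical proofs of this fact in difference algebra. The Casoratian route is shorter on the page but leans on an external lemma whose proof is itself an induction of roughly this shape; your skew-polynomial route is self-contained and makes transparent exactly why each degree contributes at most one $F$-dimension. The only points that need care in your version are the ones you flag: that $c\mapsto L_c$, $X\mapsto\sigma$ extends to a ring homomorphism $K[X;\sigma]\to\mathrm{End}_{\mathbf Z}(K^+)$ (the check $L_{\sigma(c)}\circ\sigma=\sigma\circ L_c$), that right division by the monic $X-\alpha$ is available, that $\ker(\sigma-\alpha\cdot\mathrm{id})=Fx_0$ because a ratio of two solutions is $\sigma$-fixed, and that $Fx_0\subseteq V$ since $V$ is an $F$-subspace; you handle all of these correctly. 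Note that both proofs implicitly assume $P\neq 0$ and $\sigma\neq 0$, which is the intended setting.
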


\begin{proof}Let $x_0,x_1,\dots,x_n$ be solutions of the equation$$\sigma^n(x)+\displaystyle\sum_{i=0}^{n-1}a_i\sigma^i(x)=0$$ and let $C(x_0,x_1,\dots,x_n)$ be their Casoratian, defined by

\begin{eqnarray*}
C(x_0,x_1,\dots,x_n) 	&= & 
\left|
\begin{array}{cccc}
x_0 & x_1 & \cdots & x_n\\
\sigma(x_0) & \sigma(x_1) & \cdots & \sigma(x_n)\\
\vdots & \vdots & \ddots & \vdots\\
\sigma^{n}(x_0) & \sigma^{n}(x_1) & \cdots & \sigma^{n}(x_n)
\end{array}
\right| \\
						& = & \left|
\begin{array}{cccc}
x_0 & x_1 & \cdots & x_n\\
\sigma(x_0) & \sigma(x_1) & \cdots & \sigma(x_n)\\
\vdots & \vdots & \ddots & \vdots\\
-\displaystyle\sum_{i=0}^{n-1}a_i\sigma^i(x_0) & -\displaystyle\sum_{i=0}^{n-1}a_i\sigma^i(x_1) & \cdots & -\displaystyle\sum_{i=0}^{n-1}a_i\sigma^i(x_n)
\end{array}
\right|\\
						& = & 0
\end{eqnarray*}So the solutions $x_0,x_1,\dots,x_n$ are linearly dependent over $F$ according to \cite[Lemma II p. 271]{Cohn}.\end{proof}

\begin{lem}\label{fix} If the set of points fixed by $\sigma$ is infinite, the set of points fixed by $\sigma$ and $\sigma^n$ are the same for every natural number~$n$.\end{lem}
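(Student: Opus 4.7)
The plan is to apply Lemma~\ref{truc} to the polynomial $P(X) = X^n - 1$, so that $P(\sigma) = \sigma^n - \mathrm{id}$ and $\ker P(\sigma)$ is precisely the set of points fixed by $\sigma^n$. The lemma gives that this kernel is an $F$-vector space of dimension at most $n$; in particular, $\mathrm{Fix}(\sigma^n)$ is a finite-dimensional $F$-vector space.

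Next, I would observe that $\mathrm{Fix}(\sigma^n)$ is a subfield of $K$ (as the fixed points of any field morphism form a subfield), and clearly it contains $F = \mathrm{Fix}(\sigma)$. Thus $\mathrm{Fix}(\sigma^n)/F$ is a field extension of finite degree, hence algebraic.

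Now I would invoke the dichotomy already recorded before Lemma~\ref{truc}: the definable subfield $F$ of the small field $K$ is either finite or algebraically closed. By hypothesis $F$ is infinite, so $F$ is algebraically closed. A field that is algebraically closed admits no proper algebraic extension, so $\mathrm{Fix}(\sigma^n) = F = \mathrm{Fix}(\sigma)$.

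The reverse inclusion $\mathrm{Fix}(\sigma) \subseteq \mathrm{Fix}(\sigma^n)$ is immediate, so the two sets coincide. The only real input is Lemma~\ref{truc}; everything else is formal. No step looks like it will pose any obstacle, since the algebraic closedness of the infinite fixed field has already been established in the paragraph preceding Lemma~\ref{truc}.
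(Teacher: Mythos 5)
Your argument is correct, and it differs from the one in the paper. The paper argues pointwise: given $x$ with $\sigma^n(x)=x$, the elements $x,\sigma(x),\dots,\sigma^{n-1}(x)$ are cyclically permuted by $\sigma$, so their elementary symmetric functions lie in $F=\mathrm{Fix}(\sigma)$, making $x$ a root of a monic polynomial over $F$; since $F$ is infinite hence algebraically closed, $x\in F$. You instead feed the specific polynomial $P(X)=X^n-1$ into Lemma~\ref{truc}, obtain that $\mathrm{Fix}(\sigma^n)=\ker(\sigma^n-\mathrm{id})$ is an $F$-vector space of dimension at most $n$, note that it is a subfield of $K$ containing $F$ and hence a finite (so algebraic) extension of $F$, and again conclude by algebraic closedness of $F$. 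Both routes hinge on the same dichotomy (definable subfield of a small field is finite or algebraically closed), but the paper's orbit-and-symmetric-functions argument is self-contained and does not need Lemma~\ref{truc}, whereas yours leans on the Casoratian machinery of Lemma~\ref{truc}; the trade-off is that your version is shorter once Lemma~\ref{truc} is in hand and makes visible that the result is just the special case $P=X^n-1$ of the preceding lemma.
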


\begin{proof}The field $F$ is algebraically closed. If $\sigma^n$ fixes $x$, $\sigma$ fixes the symmetric functions of the roots $x,\sigma(x),\dots,\sigma^{n-1}(x)$, hence the roots.\end{proof}

\begin{thm}In a small field of positive characteristic, the only definable field morphism the set of points fixed by which is infinite, is the identity.\end{thm}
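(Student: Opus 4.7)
The plan is to reduce the question to an Artin--Schreier style obstruction in characteristic $p$, using Proposition~\ref{endosurj} as the main hammer.

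Assume $\sigma$ is a definable field morphism of the small field $K$ of characteristic $p>0$, and that its fixed field $F$ is infinite. First, observe that $F$ is a definable subfield, hence algebraically closed by Corollary~\ref{wa}. Suppose for a contradiction that $\sigma\neq\mathrm{id}$, and consider the map
\[
f=\sigma-\mathrm{id}:K\longrightarrow K.
\]
A direct check using $\sigma|_F=\mathrm{id}_F$ shows that $f$ is $F$-linear. Its kernel is exactly $F$, which is a $1$-dimensional $F$-vector space (in particular of finite $F$-dimension). Since $\sigma\neq\mathrm{id}$, the map $f$ is non-trivial.

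Now I apply Proposition~\ref{endosurj} to $f$: it asserts that such a non-trivial $F$-linear endomorphism of $K$ with finite-dimensional kernel is surjective. Hence there exists $x\in K$ with $\sigma(x)-x=1$. Iterating, and using that $K$ has characteristic $p$,
\[
\sigma^p(x)=x+\underbrace{1+1+\cdots+1}_{p\text{ times}}=x.
\]
Therefore $x$ is fixed by $\sigma^p$. Because $F$ is infinite, Lemma~\ref{fix} applies and tells us that the fixed set of $\sigma^p$ coincides with $F$, so $x\in F$ and $\sigma(x)=x$. This contradicts $\sigma(x)=x+1$, and the contradiction finishes the proof.

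No step is really an obstacle here: the genuine work (small field descending chain information, $F$-linear surjectivity of endomorphisms with small kernel, and the Casoratian-based bound on the dimension of the fixed space of $\sigma^n$) has already been carried out in Corollary~\ref{wa}, Proposition~\ref{endosurj} and Lemma~\ref{fix}. The only ingredient specific to this statement is the characteristic $p$ trick that turns surjectivity of $\sigma-\mathrm{id}$ into a fixed point of $\sigma^p$ that is not fixed by $\sigma$; this is exactly what fails in characteristic zero, which is why the theorem is stated for positive characteristic.
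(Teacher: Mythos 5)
Your proof is correct and follows essentially the same route as the paper: apply Proposition~\ref{endosurj} to the $F$-linear map $\sigma-\mathrm{id}$ to get a pseudo-root $x$ with $\sigma(x)=x+1$, then use the characteristic-$p$ trick $\sigma^p(x)=x+p=x$ together with Lemma~\ref{fix} to reach a contradiction. The one small simplification you make is noting directly that $\ker(\sigma-\mathrm{id})=F$, so it is one-dimensional over $F$ for free, whereas the paper invokes the Casoratian Lemma~\ref{truc} (which is overkill for the polynomial $X-1$); both are fine, and the core argument is identical.
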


\begin{proof}Otherwise, the map $\sigma-Id$ is onto after Proposition~\ref{endosurj} and Lemma~\ref{truc} so there is some $x$ satisfying $\sigma(x)=x+1$, hence $\sigma^p(x)=x+p=x$, a contradiction with Lemma~\ref{fix}.\end{proof}



\section{Weakly small rings}\label{WSR}

All the rings considered here are associative. They may neither have a unit nor be abelian. Let $R$ be a ring. An element $r$ is said to be \emph{nilpotent of nilexponent $n$} if $n$ is the least natural number with $r^n=0$. $R$ is \emph{nil} if there is some natural number $n$ such that every element is nil of nilexponent at most $n$. Its \emph{nilexponent} is the least such $n$. We write $R^n$ for the $n$ times Cartesian product of $R$, and $R^{(n)}$ for the set $\{r_1\cdots r_n:(r_1,\dots,r_n)\in R^n\}$. The ring $R$ is \emph{nilpotent} if $R^{(n)}$ is zero for some $n$. Its \emph{nilpotency class} is the least such $n$. An \emph{idempotent} is any non-zero element $e$ with $e^2=e$. Two idempotents $e,f$ are \emph{orthogonal} if $ef=fe=0$.

Let $A$ be a subset of $R$. An element $r$ \emph{left annihilates $A$} if $rA$ is zero. We write $Ann_R(A)$ for the \emph{left annihilators of $A$ in $R$}, that is the set of elements in $R$ that left annihilates $A$. Symmetrically, $Ann^R(A)$ will stand for the right annihilator of $A$. Note that $Ann_RAnn^RAnn_R(A)$ equals $Ann_R(A)$.

The \emph{characteristic of $R$} is the least non-zero natural number $n$ such that the $n$ times sum $r+\dots+r$ is zero for every $r$ in $R$. If such a number does not exists $R$ has characteristic zero.

We begin by "dimension $1$" rings in the sense of \cite{dminimal}. Recall that a $d$-minimal group is abelian-by-finite \cite{dminimal}.

\begin{prop}\label{minring}An infinite ring with no definable infinite proper subgroup is a ring with trivial multiplication, or a division ring.\end{prop}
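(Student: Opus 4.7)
My plan is to show that either $R^2 = 0$, giving trivial multiplication, or $R$ has a two-sided unit and every nonzero element is invertible. The hypothesis ``no definable infinite proper subgroup'' will be used to reduce every definable additive subgroup of $R$ to either finite or $R$ itself.

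I would begin by observing that $I = \{a \in R : Ra = 0\}$ and $J = \{a \in R : aR = 0\}$ are $\emptyset$-definable two-sided ideals, hence definable additive subgroups, so each is finite or equals $R$. If $I = R$ then $R^2 = 0$ and we are done; otherwise both $I$ and $J$ are finite. For $a \in R$, the set $\mathrm{Ann}_R(a) = \{r : ra = 0\}$ is an $\{a\}$-definable additive subgroup, hence equals $R$ (exactly when $a \in I$) or is finite. In the latter case $Ra$, the image of $r \mapsto ra$ with finite kernel, is an infinite definable additive subgroup of $R$, so $Ra = R$. Symmetrically, $a \notin J$ yields $aR = R$.

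Next, I would manufacture a unit. Pick $a \in R \setminus (I \cup J)$, available since $I \cup J$ is finite and $R$ is infinite. From $Ra = R$, select $e$ with $ea = a$. The set $E = \{r \in R : er = r\}$ is an additive subgroup (by distributivity), and by associativity of multiplication, $r \in E$ and $x \in R$ give $e(rx) = (er)x = rx$, so $E$ is a right ideal. Since $a \in E$ and $aR = R$, we get $E \supseteq aR = R$, whence $e$ is a left identity. Symmetrically, from $aR = R$ choose $f$ with $af = a$; then $\{r : rf = r\}$ is a left ideal containing $a$ hence containing $Ra = R$, so $f$ is a right identity. Then $e = ef = f$ is a two-sided unit $1 \in R$, and the existence of a unit forces additive commutativity via the standard $(1+1)(a+b)$ argument.

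Finally, for any $a \neq 0$, $1 \cdot a \neq 0$ shows $Ra \neq 0$ so $Ra = R$; similarly $aR = R$. Choose $b, c$ with $ab = 1 = ca$; then $c = c \cdot 1 = c(ab) = (ca)b = b$, so $a$ is two-sided invertible. Hence every nonzero element is a unit and $R$ is a division ring. I expect the most delicate step to be producing a \emph{global} left identity: $ea = a$ alone only makes $e$ act as identity on the principal left ideal $Ra$, and the trick is to choose $a$ outside $J$ as well as outside $I$, so that the right ideal $\{r : er = r\}$ containing $a$ automatically absorbs $aR = R$.
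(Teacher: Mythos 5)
Your proof is correct. Both arguments open the same way: the dichotomy forced by the hypothesis (a definable additive subgroup is finite or all of $R$, so a definable endomorphism of $(R,+)$ is either zero or onto) yields an element whose left and right multiplications are surjective. The routes to a unit then diverge. The paper, having $r$ with $rR=R$ and $e$ with $re=r$, applies the dichotomy once more to the endomorphism $x\mapsto ex-x$: if it were onto one could solve $ex-x=r$, giving $r^2=r(ex-x)=(re)x-rx=0$ and hence $R=r^2R=0$, a contradiction; so the map is zero and $e$ is a left unit. You instead pick $a$ outside the two finite annihilator ideals $I$ and $J$ so that simultaneously $Ra=R$ and $aR=R$, take $e$ with $ea=a$, and observe by pure ring arithmetic that the fixed set $E=\{r:er=r\}$ is a right ideal containing $a$, hence $E\supseteq aR=R$. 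This is a genuinely different step: it replaces a second use of the model-theoretic dichotomy with an elementary ideal-absorption argument, at the small cost of having to choose $a$ avoiding $I\cup J$ up front. You also explicitly record the $(1+1)(a+b)$ argument to recover additive commutativity, which is worth stating since the paper's rings are not assumed abelian and the published proof leaves this implicit. One tiny stylistic point: in ``$Ra\ne 0$ so $Ra=R$'' you are silently reusing that $\mathrm{Ann}_R(a)$ is then a proper, hence finite, definable subgroup so that $Ra$ is infinite; you did spell this out earlier, so no gap, but the final paragraph reads as if $Ra\ne 0$ alone sufficed.
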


\begin{proof}Let $R$ be such a ring. Any definable group morphism of $R$ is either zero or onto. Let us suppose the multiplication non trivial. There exists some $r$ so that $rR$ equal $R$. There is some $e$ such that $re$ equal $a$. So $eR$ equals $R$. If there exists some $x$ with $ex-x=r$, then $r^2$ is zero, and $R$ is zero. Thus $ex-x$ must be zero for all $x$, and $e$ is a left unity. Symmetrically, $R$ has a right unit, which must be $e$. If $sR$ is zero, then $s$ is zero, so the multiplication by a non-zero element is onto, and $R$ is a division ring.\end{proof}

\begin{cor}A $d$-minimal ring has an ideal of finite index which is a field or a ring with trivial multiplication.\end{cor}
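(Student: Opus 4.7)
The plan is to find a minimum definable subgroup $I$ of $R^+$ of finite index, show that this minimality forces $I$ to be a two-sided ideal with no proper infinite definable subgroup, invoke Proposition~\ref{minring} on $I$, and finally promote the ``division ring'' alternative to ``field''.

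First I would use the defining feature of $d$-minimality---every definable subgroup of $R^+$ is either finite or of finite index---together with the fact that a $d$-minimal group possesses a smallest definable subgroup of finite index, to produce $I$. For $I$ to be a two-sided ideal, fix $r\in R$ and consider $L_r^{-1}(I)=\{x\in R:rx\in I\}$: the map $L_r$ induces an injection of $R^+/L_r^{-1}(I)$ into the finite quotient $R^+/I$, so $L_r^{-1}(I)$ has finite index in $R^+$; minimality of $I$ then yields $I\subseteq L_r^{-1}(I)$, i.e.\ $rI\subseteq I$, and symmetrically $Ir\subseteq I$.

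Next, I would check that $I$ admits no proper infinite definable subgroup: a proper definable subgroup $K$ of $I^+$ is also a definable subgroup of $R^+$, so by $d$-minimality either finite or of finite index in $R^+$; the second possibility combined with $K\subseteq I$ would force $K=I$ by minimality, contradicting $K\subsetneq I$, so $K$ must be finite. Proposition~\ref{minring} applied to the infinite ring $I$ then yields that $I$ is either a ring with trivial multiplication or a division ring.

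Finally, if $I$ is a division ring, weak smallness is inherited from $R$ (any $1$-type of $I$ over parameters in $I$ extends to a $1$-type of $R$ over the same parameters, hence there are only countably many), and Proposition~\ref{infi} gives that the centre $Z(I)$ is infinite; being an infinite definable subgroup of $I^+$, the preceding paragraph forces $Z(I)=I$, making $I$ commutative, hence a field. The main obstacle is the first step---producing a minimum definable finite-index subgroup of $R^+$---which is a feature specific to $d$-minimal groups (beyond the abelian-by-finite conclusion cited just above) and does not come for free from weak smallness alone.
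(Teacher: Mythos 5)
Your proof is correct and reaches the same intermediate structure as the paper---the smallest definable finite-index additive subgroup $I$, which is a two-sided ideal with no proper infinite definable subgroup, to which Proposition~\ref{minring} applies---but you close the division-ring case by a genuinely different route. The paper exploits $d$-minimality of the \emph{multiplicative} group: since a $d$-minimal group is abelian-by-finite, $I^\times$ has a smallest definable finite-index subgroup $G$ which is abelian, and then $C_I(G)$ is an infinite definable sub-division ring of $I$, hence equals $I$, forcing $G\subseteq Z(I)$ and then $Z(I)=I$. You instead observe that $I$ inherits weak smallness from $R$ (a definable subset with induced structure has at most as many $1$-types over any parameter set as the ambient structure does over that set together with the defining parameters) and invoke Proposition~\ref{infi} to get $Z(I)$ infinite directly, then conclude $Z(I)=I$ from the no-proper-infinite-definable-subgroup property. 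Your route leans only on the general weakly small machinery already in the paper and avoids the abelian-by-finite citation, at the modest cost of the hereditary weak-smallness remark; the paper's route stays entirely inside the $d$-minimal toolbox. The remaining details you supply (preimage argument that $I$ is a two-sided ideal; that any definable subgroup of $I^+$ is finite or all of $I$) are the right justifications for claims the paper leaves implicit, and your flagged reliance on the existence of a smallest definable finite-index subgroup of a $d$-minimal group is indeed the one place where both you and the paper invoke a $d$-minimal-specific fact rather than weak smallness alone.
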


\begin{proof}The ring has a smallest definable additive subgroup of finite index $I$ which is an ideal, with no proper infinite subgroup. If the multiplication is non-trivial, $I$ is a division ring by Proposition~\ref{minring}. $I$ has a smallest multiplicative subroup $G$ of finite index, which is abelian by~\cite{dminimal}. Its centraliser $C_I(G)$ is an infinite division ring, and equal $I$. So the centre of $I$ is infinite, and equals $I$.\end{proof}

\subsection{General facts about weakly small rings}

\begin{lem}Let $R$ be a weakly small ring.\begin{enumerate}
\item If some element does not left divide zero, $R$ has a left unit.
\item If $R$ is unitary, an element is left invertible if and only if it is right divisible, and its right and left inverses are the same.
\item An element is a left zero divisor if and only if it is not left invertible.\end{enumerate}\end{lem}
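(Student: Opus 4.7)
The plan is to reduce all three parts to one elementary observation: for every $a \in R$, the left multiplication map $\ell_a \colon x \mapsto ax$ is a definable endomorphism of the weakly small abelian group $(R,+)$. I would invoke the consequence of Lemma~\ref{fibresf} noted in the proof of Lemma~\ref{perf} --- a definable group homomorphism of a weakly small group whose kernel has size $n$ has image of index at most $n$ --- specialised to $n = 1$: an injective definable additive endomorphism of $R$ is surjective. By symmetry, right multiplication $r_a \colon x \mapsto xa$ enjoys the same property.

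For part (1), the hypothesis says exactly that $\ell_a$ is injective, so surjectivity yields some $e \in R$ with $ae = a$. For every $x \in R$ one then has $a(ex - x) = aex - ax = ax - ax = 0$, so $ex - x \in \ker \ell_a = \{0\}$, and $e$ is a left unit of $R$.

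For part (2), assuming $R$ has a two-sided unit $1$ and $ba = 1$, I would first observe that $\ell_a$ is injective (from $ax = 0$ we get $x = 1\cdot x = b(ax) = 0$), hence surjective, producing $c$ with $ac = 1$; the standard identity $b = b(ac) = (ba)c = c$ then identifies the left and right inverses. The reverse implication, that right invertibility entails left invertibility with the same inverse, is the mirror argument using $r_a$ in place of $\ell_a$.

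For part (3), I would combine (1) and (2). If $a$ is not a left zero divisor, $\ell_a$ is injective and hence surjective, so $a$ admits a right inverse (with respect to $1$ in the unitary case, or with respect to the left unit supplied by (1) in general), and (2) promotes this to a two-sided inverse. Conversely, from $ba = 1$ the equality $ax = 0$ forces $x = bax = 0$, so $a$ is not a left zero divisor. No serious obstacle should arise --- the whole lemma is a single trick (``injective forces surjective'' for definable additive endomorphisms of $R$) applied three times --- the only point requiring a little care is that the left unit produced in (1) is not a priori two-sided, so parts (2) and (3) should be read in the unitary setting, or else with reference to the left unit given by (1).
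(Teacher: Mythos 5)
Your proposal is correct and takes essentially the same approach as the paper: both rest on the single observation that an injective definable additive endomorphism of a weakly small group is surjective (Lemma~\ref{fibresf}), applied to the left and right multiplication maps. The only cosmetic difference is which multiplication map you apply it to in part~(2) --- the paper starts from a right inverse $rs=e$ and uses right multiplication by $r$, while you start from a left inverse $ba=1$ and use $\ell_a$; these are mirror images, and you rightly note the other direction is symmetric.
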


\begin{proof}$(1)$ Let $r$ be a non left zero divisor. Left multiplication by $r$ is injective, so surjective by Lemma~\ref{fibresf}, and there is some $e$ such that $re=r$. For all $s$, $r(es-s)$ is zero so $es$ equals $s$, and $e$ is a left unit. $(2)$ If $rs=e$ holds, then right multiplication by $r$ is injective, hence surjective~; there exists some $t$ so that $tr=e$. Hence $te=trs=es$. $(3)$ If $r$ does not left divide zero, then $R$ has a left unit, and left multiplication by $r$ is onto~: the pre-image of the left unit is a right inverse of $r$.\end{proof}

\begin{cor}A weakly small ring with no zero divisor is a division ring.\end{cor}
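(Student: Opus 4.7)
The plan is to derive the corollary directly from the three clauses of the preceding lemma together with its left--right symmetric counterpart. Let $R$ be a weakly small ring without zero divisors.

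First I would produce a two-sided unit. By clause $(1)$ of the lemma, since any nonzero element of $R$ fails to left-divide zero, $R$ admits a left unit $e$. The proof of clause $(1)$ is entirely symmetric between left and right: if $r$ does not right-divide zero, then right multiplication by $r$ is injective, hence surjective by Lemma~\ref{fibresf}, and one extracts a right unit $e'$ in exactly the same manner. Applied to a nonzero element of a ring without zero divisors, this yields a right unit $e'$. Then $ee' = e'$ (since $e$ is a left unit) and $ee' = e$ (since $e'$ is a right unit), so $e = e'$ is a two-sided unit and $R$ is unitary.

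Now I would invoke clauses $(3)$ and $(2)$. Given a nonzero $r \in R$, the element $r$ is not a left zero divisor, so clause $(3)$ gives some $s$ with $rs = e$. By clause $(2)$, $r$ is then also right-divisible, and its left and right inverses coincide. Hence every nonzero element of $R$ is a two-sided unit, so $R$ is a division ring.

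The only potential pitfall is that the lemma is stated one-sidedly, so I would note explicitly that its proof transports verbatim to the right-handed analogue: this is the only ingredient used beyond a direct citation of the preceding clauses, and no further weak-smallness argument is needed beyond the one already packaged in Lemma~\ref{fibresf}.
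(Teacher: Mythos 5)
Your proof is correct and matches the paper's intent: the corollary is stated there without proof, as an immediate consequence of the preceding lemma, and your argument (two-sided unit from clause (1) and its left--right mirror, then clauses (3) and (2) to invert every nonzero element) is exactly the expected filling-in, with the symmetry of the lemma's proof correctly noted as the only extra ingredient.
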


Let us state two chain conditions on ascending chains of annihilators~:

\begin{prop}\label{Ch1}In a weakly small ring $R$, there is no properly ascending chain of left annihilators of the kind $Ann_R(\delta_1)\leq Ann_R(\delta_2)\leq\cdots\leq Ann_R(\delta_i)\leq\cdots$, where the sets $\delta_i$ lie in some finitely generated definable closure $\delta$.\end{prop}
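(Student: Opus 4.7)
The plan is to assume for contradiction an infinite properly ascending chain of left annihilators and, via the triple annihilator identity, derive an infinite properly descending chain of definable additive subgroups whose distinguishing elements stay algebraic over the parameters, contradicting Theorem~\ref{dccm} after naming the parameters.

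\textbf{Reduction step.} Let $L_i := Ann_R(\delta_i)$ and $D_i := Ann^R(L_i)$. The standard identity $Ann_R(Ann^R(Ann_R(S))) = Ann_R(S)$ recalled just before the statement gives $Ann_R(D_i) = L_i$; and since $\delta_i \subseteq D_i \cap \delta \subseteq D_i$ we also get $Ann_R(D_i \cap \delta) = L_i$. Replacing each $\delta_i$ by $D_i \cap \delta$ (still a subset of $\delta$), we may assume $\delta_i = D_i \cap \delta$. The $D_i$ form a descending chain (since $Ann^R$ reverses inclusion), and $L_i \subsetneq L_{i+1}$ forces $\delta_i \neq \delta_{i+1}$: otherwise $D_i \cap \delta = D_{i+1} \cap \delta$ would give $L_i = Ann_R(\delta_i) = Ann_R(\delta_{i+1}) = L_{i+1}$. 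So we obtain a properly descending chain $\delta_1 \supsetneq \delta_2 \supsetneq \cdots$ of subsets of $\delta \subseteq dcl(\overline{a})$. Pick $d_n \in \delta_n \setminus \delta_{n+1}$; each $d_n$ lies in $dcl(\overline{a}) \subseteq acl(\overline{a})$.

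\textbf{Construction of the subgroup chain.} Since $d_n \notin D_{n+1} = \bigcap_{r \in L_{n+1}}\{x \in R : rx = 0\}$, there exists $r_{n+1} \in L_{n+1}$ with $r_{n+1} d_n \neq 0$. Set $E_j := \{x \in R : r_j x = 0\}$ and $F_k := E_2 \cap \cdots \cap E_k$ (with $F_1 := R$). For $j \leq k$ one has $\delta_k \subseteq \delta_j$ and $r_j \in Ann_R(\delta_j)$, hence $r_j d_k = 0$, so $d_k \in E_j$ and thus $d_k \in F_k$. On the other hand $r_{k+1} d_k \neq 0$ gives $d_k \notin E_{k+1} \supseteq F_{k+1}$. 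So $F_1 \supsetneq F_2 \supsetneq \cdots$ is a properly descending chain of additive subgroups, each definable over $B := \overline{a} \cup \{r_j : j \geq 2\}$, while the witnesses $d_k$ all lie in $dcl(\overline{a}) \subseteq acl(B)$.

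\textbf{Contradiction.} Name the countable set $B$ by constants; weak smallness is preserved, since a $1$-type over $A$ in the expansion is a $1$-type over $A \cup B$ in the original language, of which there are at most countably many. In the expansion each $F_k$ is $\emptyset$-definable and each $d_k$ belongs to $acl(\emptyset)$, so the trace $F_k \cap acl(\emptyset)$ is a properly descending infinite chain, contradicting Theorem~\ref{dccm}.

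The main obstacle, and the point of the triple annihilator reduction, is to manufacture distinguishing elements inside the algebraic closure of the parameters defining the $F_k$: the witnesses $r_{n+1}$ are arbitrary elements of $R$ and need not be algebraic over $\overline{a}$, but the reduction $\delta_i = D_i \cap \delta$ forces the $d_n$ to live inside $\delta \subseteq dcl(\overline{a})$, which remains algebraic after adjoining the $r_j$ as constants.
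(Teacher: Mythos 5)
Your proof is correct, and it takes a genuinely different route from the paper's. The paper works directly with the descending chain of double annihilators $Ann^{R}Ann_{R}(\delta_i)$, computes their Cantor--Bendixson rank and degree over the (possibly infinite) set $\delta$, picks an $n$ where the pair $(CB_\delta, dCB_\delta)$ is minimal, and observes that $\delta_n + Ann^{R}Ann_{R}(\delta_{n+1})$ sits inside $Ann^{R}Ann_{R}(\delta_n)$ while preserving rank and degree, forcing $\delta_n \subseteq Ann^{R}Ann_{R}(\delta_{n+1})$ and hence stabilisation of the chain. Your proof instead passes through the triple-annihilator identity to replace $\delta_i$ by $D_i\cap\delta$ and thereby turn the ascending annihilator chain into a properly \emph{descending} chain of subsets of $\delta=dcl(\overline a)$; you then extract two sequences of witnesses --- the distinguishing elements $d_n\in\delta$ and the annihilating elements $r_{n+1}\in L_{n+1}$ --- and assemble from the $r_j$ an honest chain of \emph{definable} additive subgroups $F_k$ whose traces over $acl(\emptyset)$ (after naming the countably many parameters) are separated by the $d_k$, contradicting Theorem~\ref{dccm}. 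What your approach buys is that you only ever invoke the weakly small descending chain condition as a black box, and the subgroups $F_k$ you feed into it are genuinely definable by a single formula; the paper's argument operates on the sets $Ann^{R}Ann_{R}(\delta_i)$, which when $\delta_i$ is infinite are merely $\delta$-invariant and require a bit more care to endow with a local Cantor rank. The price you pay is the constant-naming step and a somewhat longer construction, but the core reduction --- forcing the distinguishing elements into $dcl(\overline a)$ via $\delta_i:=D_i\cap\delta$ so that they survive into $acl(\emptyset)$ after expansion --- is exactly the right idea and is carried out correctly.
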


\begin{proof}The chain $Ann^RAnn_R(\delta_1)\geq\cdots\geq Ann^RAnn_R(\delta_{i})$ is decreasing. Let $n$ be some natural number so that the Cantor rank and degree over $\delta$ of $Ann^RAnn_R(\delta_{n})$ be minimal. As $\delta_n+Ann^RAnn_R(\delta_{n+1})$ is included in $Ann^RAnn_R(\delta_n)$, the set $\delta_n$ is included in $Ann^RAnn_R(\delta_{n+1})$, so $Ann_R(\delta_{n+1})\leq Ann_R(\delta_n)$.\end{proof}


\begin{prop}\label{Ch2}In a weakly small ring, there is no properly ascending chain of annihilators $Ann_\Gamma(\Gamma_1)\leq Ann_\Gamma(\Gamma_2)\leq\cdots\leq Ann_\Gamma(\Gamma_{i})\leq\cdots$, where the sets $\Gamma_i$ lie in some finitely generated algebraic closure $\Gamma$.\end{prop}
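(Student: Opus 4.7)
Proof plan.

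I would adapt the argument of Proposition~\ref{Ch1} to this setting by running it inside the weakly small substructure $\Gamma$ itself, treating a finite tuple $\gamma_0$ with $\Gamma=acl(\gamma_0)$ as named constants. Once $\gamma_0$ is named, the algebraic closure of the empty set in $\Gamma$ is the whole of $\Gamma$, which will let Theorem~\ref{dccm} apply to the weakly small group $(\Gamma,+)$.

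The first step is a reduction to finite parameter sets. The family $\{Ann_\Gamma(F):F\subseteq\Gamma_i\text{ finite}\}$ is a descending chain of $\Gamma$-definable additive subgroups of $\Gamma$ whose intersection is $Ann_\Gamma(\Gamma_i)$, so Theorem~\ref{dccm} yields a finite $F_i\subseteq\Gamma_i$ with $Ann_\Gamma(F_i)=Ann_\Gamma(\Gamma_i)$, hence $Ann^\Gamma Ann_\Gamma(F_i)=Ann^\Gamma Ann_\Gamma(\Gamma_i)$. Viewed in $\Gamma$, the latter is honestly definable by $\forall y(yF_i=0\to zy=0)$ with finitely many parameters from $acl(\gamma_0)$, so Lemma~\ref{CBacl} provides it with well-defined Cantor rank and degree over $\gamma_0$.

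Now I would mimic the proof of Proposition~\ref{Ch1}. Set $H_i:=Ann^\Gamma Ann_\Gamma(\Gamma_i)$; this is a descending chain of subgroups. Pick $n$ lexicographically minimising $(CB_{\gamma_0}(H_n),dCB_{\gamma_0}(H_n))$ and use the standard fact that two nested subgroups of $(\Gamma,+)$ sharing Cantor rank and degree must coincide — otherwise extra cosets of the smaller would contribute fresh maximal-rank types to the larger one. So $H_n=H_{n+1}$, giving $\Gamma_n\subseteq H_n=H_{n+1}=Ann^\Gamma Ann_\Gamma(\Gamma_{n+1})$; applying $Ann_\Gamma$ and invoking the standard identity $Ann_\Gamma Ann^\Gamma Ann_\Gamma=Ann_\Gamma$ produces $Ann_\Gamma(\Gamma_{n+1})\subseteq Ann_\Gamma(\Gamma_n)$, contradicting the strict ascent at step~$n$.

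The main obstacle is the mismatch between the Cantor-rank framework, which wants finite parameter sets, and the fact that $Ann^\Gamma Ann_\Gamma(\Gamma_i)$ is a priori controlled by the (possibly infinite) intermediate set $Ann_\Gamma(\Gamma_i)$. The crucial manoeuvre is to leave the ambient ring $R$ and work inside $\Gamma$ as its own structure: only there is the universal quantifier over $\Gamma$ first-order available, so that after Theorem~\ref{dccm} replaces $\Gamma_i$ by a finite $F_i$, the outer annihilator becomes a genuinely definable subgroup whose parameters lie in $acl(\gamma_0)$. Once this is secured, the whole Cantor-rank argument of Proposition~\ref{Ch1} transports almost verbatim.
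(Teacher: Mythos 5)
Your reduction to finite $F_i$ in the first step is fine (the family is directed rather than a chain, but that is easily repaired), and the final descent from $Ann^\Gamma Ann_\Gamma(\Gamma_n)=Ann^\Gamma Ann_\Gamma(\Gamma_{n+1})$ back to $Ann_\Gamma(\Gamma_n)=Ann_\Gamma(\Gamma_{n+1})$ via the identity $Ann_\Gamma Ann^\Gamma Ann_\Gamma=Ann_\Gamma$ is correct. The gap is the middle step. You propose to ``leave the ambient ring $R$ and work inside $\Gamma$ as its own structure'' so that $\forall y(yF_i=0\to yz=0)$ becomes a genuine definition of $Ann^\Gamma Ann_\Gamma(F_i)$, and then to run a Cantor-rank argument in $\Gamma$ over $\gamma_0$. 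But nothing in the paper, and nothing you prove, shows that $\Gamma=acl(\gamma_0)$ is a weakly small structure in its own right; countability of $\Gamma$ gives no bound on $|S_1^{\Gamma}(A)|$ for $A\subseteq\Gamma$, and without that the Cantor rank need not be ordinal-valued and Theorem~\ref{dccm} is simply not available for $(\Gamma,+)$. Read in $R$ instead, the same formula defines $Ann^{R}Ann_{R}(F_i)$, whose trace over $\Gamma$ is in general a \emph{proper} subgroup of $Ann^\Gamma Ann_\Gamma(F_i)$, since the universal quantifier lets $y$ range over all of $R$; as $\Gamma$ is not $R$-definable, there is no first-order repair of this inside $R$.

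The paper's own argument avoids both problems and is shorter. After naming $\gamma_0$, so that $\Gamma=acl(\emptyset)$, each $Ann^\Gamma Ann_\Gamma(\Gamma_i)$ is the trace over $\Gamma$ of the $\Gamma$-type-definable subgroup $Ann^{R}(Ann_\Gamma(\Gamma_i))$, and these traces form a descending chain; Theorem~\ref{dccm} then applies. The only thing to justify is the passage from definable to type-definable subgroups in Theorem~\ref{dccm}, and that is exactly the sort of finite reduction your first paragraph already carries out — applied once more, now to the \emph{outer} annihilator. Concretely: if the traces $T_i$ properly descended, a witness $a_i\in T_i\setminus T_{i+1}$ would produce a finite $G_{i+1}\subseteq Ann_\Gamma(\Gamma_{i+1})$ with $a_i\notin Ann^{R}(G_{i+1})$, so the traces over $\Gamma$ of the $\Gamma$-definable groups $\bigcap_{j\le k}Ann^{R}(G_j)$ would properly descend, contradicting the chain condition. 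No Cantor-rank comparison of nested subgroups and no change of ambient structure is needed; the Cantor-rank machinery is what made Proposition~\ref{Ch1} work in the definable-closure setting, but in the present algebraic-closure setting it is both unavailable in $\Gamma$ and unnecessary in $R$.
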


\begin{proof}For every set $X$, the set $Ann X$ is type-definable with parameters in $X$. So $Ann^\Gamma Ann_\Gamma(\Gamma_i)$ is $\Gamma$-type-definable. The chain $Ann^\Gamma Ann_\Gamma(\Gamma_i)$ is descending, so $Ann^\Gamma Ann_\Gamma(\Gamma_n)$ equals $Ann^\Gamma Ann_\Gamma(\Gamma_{n+1})$ for some natural number~$n$ after the weakly small chain condition.\end{proof}

\begin{rmq}Propositions~\ref{Ch1} and~\ref{Ch2} are incomparable. The first one is global, with parameters in a definable closure, whereas the second one is local, but with parameters in an algebraic closure. They both hold for chains of right annihilators.\end{rmq}

\begin{cor}\label{rdecom}In a weakly small ring $R$, for every element $r$, there is a natural number $n$ such that$$R=r^n.R\oplus Ann_R(r^n)$$\end{cor}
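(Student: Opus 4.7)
The plan is to apply a Fitting-type argument to the definable additive group endomorphism $L_r\colon R\to R$, $L_r(x)=rx$, for which $L_r^n$ has image $r^nR$ and kernel $\{x\in R:r^n x=0\}$; I denote this kernel $Ann_R(r^n)$, matching the notation of the statement (the sidedness is chosen so as to be compatible with $L_r$; a symmetric argument via right multiplication $\rho_r(x)=xr$ would give the mirror decomposition).

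First I would stabilize the ascending chain of kernels: $\ker L_r\leq\ker L_r^2\leq\cdots$ consists of annihilators of the elements $r^i$ lying in the finitely generated definable closure $dcl(r)$, hence is stationary by Proposition~\ref{Ch1} (applied to right-hand annihilators, as allowed by the remark following Proposition~\ref{Ch2}). Pick $n$ with $\ker L_r^n=\ker L_r^{n+k}$ for all $k\geq 0$. The standard Fitting trivial-intersection argument then yields $r^nR\cap Ann_R(r^n)=\{0\}$: if $x=r^n y$ and $r^n x=0$, then $r^{2n}y=0$, so $y\in\ker L_r^{2n}=\ker L_r^n$ and $x=r^n y=0$. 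In particular, $L_r$ restricted to the definable additive subgroup $r^n R$ has trivial kernel.

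For the sum part I would invoke the consequence of Lemma~\ref{fibresf} used in the proof of Lemma~\ref{perf}: a definable additive group endomorphism with finite kernel of cardinality $k$ has image of index at most $k$ in the group. Applied to $L_r\colon r^nR\to r^nR$, whose kernel is trivial, this forces $r^nR=r^{n+1}R$, and by iteration $r^nR=r^{2n}R$. For any $x\in R$, we then have $r^n x\in r^n R=r^{2n}R$, so $r^n x=r^{2n}y$ for some $y\in R$; thus $x-r^n y\in Ann_R(r^n)$ and $x\in r^nR+Ann_R(r^n)$. Combined with the trivial intersection, this yields the required direct sum. The main obstacle, as I see it, is the absence of any a priori descending chain condition on the right ideals $r^n R$ that the classical Fitting argument would require to pair with the a.c.c.\ on kernels; weak smallness provides only the latter, through Proposition~\ref{Ch1}, and the missing d.c.c.\ is replaced by the Cantor-Bendixson index bound from Lemma~\ref{fibresf}, which becomes applicable precisely because kernel-stabilization forces the kernel of $L_r$ restricted to $r^n R$ to be trivial.
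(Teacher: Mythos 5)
Your proof is correct and uses the same two ingredients as the paper's one-line argument: Proposition~\ref{Ch1} stabilises the ascending chain of annihilators of the powers $r^i\in dcl(r)$, and Lemma~\ref{fibresf} upgrades the resulting injective definable additive endomorphism to a surjection. The only difference is one of presentation---the paper passes to the quotient $R^+/Ann_R(r^n)$ and shows the induced multiplication is onto, while you argue dually inside the image subgroup $r^nR$---and your remark about sidedness is a fair reading, since the decomposition is a Fitting decomposition only if the image and the kernel come from the same multiplication map.
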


\begin{proof}By Proposition~\ref{Ch1}, the chain $Ann_R(r),Ann_R(r^2),\dots$ becomes stationary at some step $n$. It follows that the right multiplication map by $r^n$ is an injective homeomorphism of the group $R^+/Ann_R(r^n)$. By Lemma~\ref{fibresf}, it must be onto.\end{proof}





\subsection{The Jacobson radical}

Every abelian group can be given a ring structure with trivial multiplication. Given any ring, one may be willing to isolate its "trivial" part. This is one reason to introduce the Jacobson radical. Among other notions of radical, the one introduced by Jacobson seems to be the more efficient to establish structure theorems for rings with zero radicals~:

\begin{fact}\label{WA}\emph{(Wedderburn-Artin \cite[Theorem 2.1.7]{Her})} A right Artinian ring with zero Jacobson radical is isomorphic to a finite Cartesian product of matrix rings over fields.\end{fact}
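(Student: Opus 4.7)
The plan is to carry out the classical Wedderburn--Artin proof in the right Artinian setting, in four main movements: semisimplicity, finite decomposition, isotypic grouping, and identification as matrix rings via Schur's lemma.

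First, I would show that $R$, viewed as a right module over itself, is semisimple. Since $R$ is right Artinian, its socle $S = \mathrm{soc}(R_R)$ (the sum of all minimal right ideals) is well-defined and is a two-sided ideal. The key reduction is to show $S = R$. If not, $R/S$ would be a nonzero Artinian right $R$-module, hence contain a simple submodule $T/S$, and I would then argue (using $J(R) = 0$ and a standard lifting argument: the annihilator of $T/S$ is a maximal right ideal whose intersection with complementary maximal right ideals would be forced to contain a minimal right ideal of $R$ not contained in $S$, contradiction) that $T = S$. The cleanest route is instead to note that in a right Artinian ring the Jacobson radical coincides with the intersection of the maximal right ideals and also with the largest nilpotent ideal, and that $J(R) = 0$ together with the Artinian DCC forces every element to lie in a direct sum of minimal right ideals.

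Second, using the Artinian hypothesis, I would write $R = I_1 \oplus \cdots \oplus I_k$ as a finite internal direct sum of minimal right ideals (finiteness is immediate: any infinite direct sum would give a strictly descending chain of right ideals by removing summands). Grouping the $I_j$'s by isomorphism class of simple right $R$-module, I get $R = B_1 \oplus \cdots \oplus B_r$ where $B_i \cong V_i^{n_i}$ and the $V_i$ are pairwise non-isomorphic simple right $R$-modules. Each $B_i$ is a two-sided ideal: it is obviously a right ideal, and left multiplication by any element of $R$ preserves it because $R \cdot V \subseteq V$ for each simple $V \cong V_i$ appearing in $B_i$ (otherwise the image of a nonzero homomorphism $V \to V_j$ for some $j \neq i$ would contradict Schur).

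Third, by Schur's lemma, $\mathrm{Hom}_R(V_i, V_j) = 0$ for $i \neq j$, so $B_i B_j = 0$ for $i \neq j$, and consequently $R \cong B_1 \times \cdots \times B_r$ as rings. Finally, for each factor, the standard identification $B_i \cong \mathrm{End}_{B_i}(B_i)^{\mathrm{op}} \cong \mathrm{End}_R(V_i^{n_i})^{\mathrm{op}} \cong M_{n_i}(D_i^{\mathrm{op}})$ finishes the proof, where $D_i = \mathrm{End}_R(V_i)$ is a division ring by Schur's lemma (and $D_i^{\mathrm{op}}$ is again a division ring, so we may absorb the op).

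The main obstacle is the first step: extracting semisimplicity of $R_R$ from the two hypotheses. Once $R$ is known to be a semisimple module over itself, the finite decomposition and the passage through Schur's lemma are essentially bookkeeping. Since this is classical material cited from Herstein, I would in practice simply quote it rather than reprove the semisimplicity statement from scratch.
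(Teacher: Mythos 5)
The paper does not prove this statement: it is labelled a \emph{Fact} and simply cited to Herstein \cite[Theorem 2.1.7]{Her}, so there is no internal proof for your sketch to be measured against. Your outline is the standard Wedderburn--Artin argument and, modulo bookkeeping, is sound: semisimplicity of $R_R$ from $J(R)=0$ plus the Artinian descending chain condition (lift a minimal right ideal to a direct summand via an idempotent and iterate), finiteness of the decomposition from DCC, isotypic grouping into two-sided ideals $B_i$ with $B_iB_j=0$ by Schur, and $B_i\cong \mathrm{End}_R(V_i^{n_i})\cong M_{n_i}(D_i)$ with $D_i=\mathrm{End}_R(V_i)$ a division ring. (Whether an $\mathrm{op}$ appears depends on the left/right conventions you pick for $\mathrm{End}$, but as you note it is harmless.)

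One thing you should have flagged explicitly: your argument, correctly, produces matrix rings over \emph{division rings} $D_i$, not over commutative fields, and Schur's lemma gives you no commutativity. The statement as printed in the paper says ``matrix rings over fields''; this is almost certainly the French usage of ``corps'' (which covers skew fields), consistent with both Herstein's Theorem 2.1.7 and the paper's abstract, which speaks of ``matrix rings over division rings''. As written in English, though, the claim would be false (e.g.\ $R=\mathbb{H}$ is Artinian with $J=0$ and is not a matrix ring over a commutative field), so a careful reading should register the mismatch between the stated conclusion and what the proof actually delivers.
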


Recall that a ring is \emph{right Artinian} if every decreasing chain of right ideals is stationary. An element $r$ is \emph{right quasi-regular} if there is some $s$ such that $r+s+rs$ is zero. We write $J(R)$ the Jacobson radical of $R$, defined for our purpose by~:

\begin{fact}\label{Jacob}\emph{(Jacobson \cite[Theorem 1.2.3]{Her})} The \emph{Jacobson radical} of a ring is the unique maximal right ideal the elements of which are right quasi-regular.\end{fact}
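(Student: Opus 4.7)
My plan is to take as independent definition of $J(R)$ the intersection $\bigcap_M \mathrm{Ann}_R(M)$ over all simple right $R$-modules $M$ (meaning $MR\neq 0$ and no proper nonzero submodule), and then verify that this two-sided ideal coincides with the unique maximal right ideal consisting of right quasi-regular elements. Each $\mathrm{Ann}_R(M)$ is two-sided, so $J(R)$ is automatically a two-sided ideal. The substantive tasks are (i) every element of $J(R)$ is right quasi-regular, (ii) any right ideal of right quasi-regular elements is contained in $J(R)$, and (iii) maximality and uniqueness then follow at once. If $R$ has no unit, I would silently pass to the unital extension $R^{1}=\mathbb{Z}\oplus R$; right quasi-regularity in $R$ is unchanged, and simple right $R$-modules correspond to simple unital right $R^{1}$-modules via the condition $MR\neq 0$.

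For step (i), I would assume unity and argue by contradiction. If $r\in J(R)$ is not right quasi-regular, then $1+r$ has no right inverse, so $(1+r)R$ is a proper right ideal, contained in some maximal right ideal $\mathfrak{m}$ by Zorn's lemma. The quotient $R/\mathfrak{m}$ is simple, and $r\in J(R)\subseteq \mathrm{Ann}_R(R/\mathfrak{m})$ forces $r\in\mathfrak{m}$, whence $1=(1+r)-r\in\mathfrak{m}$, a contradiction.

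For step (ii), let $I$ be a right ideal all of whose elements are right quasi-regular, and suppose some $i\in I$ satisfies $i\notin J(R)$. Pick a simple $M$ and $m\in M$ with $mi\neq 0$. Simplicity of $M$ and the fact that $miR$ is a nonzero submodule of $M$ force $miR=M$, so there exists $r\in R$ with $mir=-m$. Since $I$ is a right ideal, $ir\in I$, hence $ir$ has a right quasi-inverse $s$ satisfying $ir+s+(ir)s=0$. Applying $m$ on the left yields
\[
0 \;=\; m(ir)+ms+m(ir)s \;=\; -m+ms-ms \;=\; -m,
\]
a contradiction. Uniqueness then drops out: $J(R)$ is itself a right ideal of right quasi-regular elements by step (i) and contains every such ideal by step (ii), so it is the unique maximum.

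The main obstacle is handling the non-unital case cleanly; the cleanest route is to reduce to $R^{1}$ while checking that both right quasi-regularity and simplicity of modules with $MR\neq 0$ transfer along the standard correspondence. A secondary subtlety is the jump $miR=M$ in step (ii): one needs that a simple module is cyclic on any nonzero vector outside the global annihilator, which is exactly what the clause $MR\neq 0$ in the definition of simple module guarantees.
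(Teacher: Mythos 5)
The paper states this Fact without proof, quoting it from Herstein, so the only comparison is with the cited source: your argument is precisely the standard module-theoretic proof (take $J(R)$ to be the intersection of the annihilators of the simple right modules, check its elements are right quasi-regular via a maximal right ideal containing $(1+r)R$, and show any right ideal of right quasi-regular elements is contained in $J(R)$ by the $mir=-m$ computation), which is essentially Herstein's own route. The proof is correct, including the two points you flag: the reduction of the non-unital case to $R^{1}=\mathbf{Z}\oplus R$ (right quasi-regularity and the relevant simple modules do transfer, so $J(R)=R\cap J(R^{1})$) and the fact that $miR$ is a nonzero submodule because the set of vectors killed by $R$ is a submodule ruled out by $MR\neq 0$.
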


Whereas most other radicals are definable in second order logic, the Jacobson radical is definable by the following first order formula~: $\forall y\exists z(xy+z+xyz=0)$.

\begin{prop}\label{rad}In the Jacobson radical of a weakly small ring, the algebraic closure of any finite tuple is nilpotent.\end{prop}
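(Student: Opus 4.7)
The plan is to show that $\Gamma$, the algebraic closure of $\overline a$ computed inside $J := J(R)$, is a nilpotent ring; since $J$ is first-order definable (by the formula after Fact~\ref{Jacob}), $\Gamma = acl(\overline a) \cap J \subseteq J$. The argument proceeds in two stages: first that every element of $\Gamma$ is nilpotent (so $\Gamma$ is nil), then that the ascending chain condition on annihilators from Proposition~\ref{Ch2} forces nil-ness to upgrade to nilpotence.

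Stage~1 (nil-ness). Fix $r \in \Gamma$. To exploit quasi-regularity cleanly I would pass to the unitization $R^\sharp := \mathbb{Z}\cdot 1 \oplus R$, which remains weakly small (its $1$-types over a subset split as types in the $\mathbb{Z}$-coordinate times $1$-types over $R$) and satisfies $J(R) \subseteq J(R^\sharp)$. Applying Corollary~\ref{rdecom} in $R^\sharp$ yields $n$ with $R^\sharp = R^\sharp r^n \oplus Ann_{R^\sharp}(r^n)$. Writing $1 = u r^n + v$ with $v r^n = 0$ and right-multiplying by $r^n$ gives $r^n = u r^{2n}$, equivalently $(1 - u r^n) r^n = 0$. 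Since $u r^n \in R^\sharp r^n \subseteq J(R^\sharp)$, the element $1 - u r^n$ is invertible in $R^\sharp$, so $r^n = 0$.

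Stage~2 (nil $\Rightarrow$ nilpotent). By Proposition~\ref{Ch2} together with the remark following it, $\Gamma$ satisfies the ascending chain condition on both left and right annihilators of subsets of $\Gamma$. Combined with nil-ness from Stage~1, the Herstein--Small theorem---every nil subring of a ring satisfying the ACC on one-sided annihilators is nilpotent---applies to $\Gamma$ seen as a nil subring of itself, yielding $n$ with $\Gamma^{(n)} = 0$.

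The main obstacle lies in Stage~1's passage to $R^\sharp$: one must verify that weak smallness is preserved under unitization and that $J(R) \subseteq J(R^\sharp)$, both of which are routine but need checking. For a proof entirely internal to $R$ one can instead invoke left quasi-regularity of $r^n$ and decompose the quasi-inverse in $R = R r^n \oplus Ann_R(r^n)$ to cancel the correction term directly. A self-contained replacement for Herstein--Small in Stage~2 is to pick (by ACC) $x \in \Gamma$ with $Ann^\Gamma(x)$ maximal among $\{Ann^\Gamma(y) : y \in \Gamma,\ y \Gamma^{(k)} \neq 0 \text{ for all } k\}$, and use that $x$ is itself nilpotent to produce $z \in \Gamma$ with $Ann^\Gamma(xz)$ strictly larger, contradicting maximality.
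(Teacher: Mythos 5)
Your plan is genuinely different from the paper's. The paper does not pass through nil-ness at all: it picks $n$ with $Ann_\Gamma(\Gamma^{(n)})=Ann_\Gamma(\Gamma^{(n+1)})$ via Proposition~\ref{Ch2}, assumes $\Gamma$ is not nilpotent, chooses $a\notin Ann_\Gamma(\Gamma^{(n)})$ with $aJ\cap\Gamma$ minimal among the groups $\gamma J\cap\Gamma$ (Theorem~\ref{dccm}), produces $b\in\Gamma$ with $ab\Gamma^{(n)}\neq 0$, and then uses quasi-regularity alone to show $ab\notin abJ$, so $abJ\cap\Gamma<aJ\cap\Gamma$, contradicting minimality. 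It is a single direct minimality argument, entirely internal to $R$.

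Your Stage~1 has a real gap that I do not think is ``routine.'' The Dorroh unitization $R^\sharp=\mathbf{Z}\cdot 1\oplus R$ is \emph{never} weakly small when $R\neq 0$, and indeed not even small: the elements $n\cdot 1$ realize, already over $\emptyset$, all divisibility conditions by primes, and since $(n,0)=p\cdot(m,s)$ forces $p\mid n$, no collapse occurs in $R^\sharp$; by compactness this yields $2^{\aleph_0}$ complete $1$-types over $\emptyset$ in $\mathrm{Th}(R^\sharp)$. So Corollary~\ref{rdecom} cannot be applied in $R^\sharp$. When $R$ has finite characteristic $m$ one could use $(\mathbf{Z}/m\mathbf{Z})\oplus R$, which \emph{is} interpretable in $R$ and hence weakly small, and $J(R)\subseteq J((\mathbf{Z}/m\mathbf{Z})\oplus R)$ still holds; but the paper explicitly allows characteristic zero. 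Your alternative ``proof entirely internal to $R$'' is only gestured at, and I was not able to make it work: starting from $r^n=cr^n+b$ with $b r^n=0$, one gets $r^{2n}=(cr^n)r^n$ with $cr^n\in J$, but quasi-regularity of $cr^n$ does not obviously cancel the extra $r^n$ without a unit to absorb the correction term. You would need to supply that computation explicitly before the nil-ness claim stands.

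Stage~2 also has problems. The cited Herstein--Small/Lanski theorem is not among the paper's tools, so if you use it you are importing a nontrivial external result where the paper is self-contained. Your proposed self-contained substitute has a left/right slip: from $xy=0$ one gets $zxy=0$, so it is $Ann^\Gamma(zx)$, not $Ann^\Gamma(xz)$, that automatically contains $Ann^\Gamma(x)$. More importantly, after replacing $x$ by $zx$ (or $x^2$, etc.) you must check that the new element still lies in $S=\{y:y\Gamma^{(k)}\neq 0\ \forall k\}$, which your sketch does not address; the correct Levitzki-style argument (maximal right annihilator implies $aNa=0$, then induct) is a bit longer than what you wrote. Finally, note that the paper defines ``nil'' with a uniform bound on the nilexponent; your Stage~1 only gives element-wise nilpotence, which is what Herstein--Small needs, but you should keep the terminology straight.

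In short: the high-level split ``nil first, then ACC on annihilators upgrades to nilpotent'' is an interesting alternative route, and Stage~2 would work with the genuine Herstein--Small theorem once nil-ness is in hand, but Stage~1 as written does not go through, and the paper's own argument avoids the entire issue by never needing nil-ness in the first place.
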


\begin{proof}Let $J$ be the Jacobson radical, and $\Gamma$ be a finitely generated algebraic closure in $J$. By Proposition~\ref{Ch2}, there is a natural number $n$ such that $Ann_\Gamma(\Gamma^{(n)})$ equals $Ann_\Gamma(\Gamma^{(n+1)})$. If $\Gamma$ is not nilpotent, then $\Gamma\setminus Ann_\Gamma(\Gamma^{(n)})$ is not empty. Among the $a$ in $\Gamma\setminus Ann_\Gamma(\Gamma^{(n)})$, let us choose one such that the group $aJ\cap \Gamma$ be minimal (among the groups $\{\gamma J\cap\Gamma:\gamma\in\Gamma\}$). This is possible by the weakly small chain condition~\ref{dccm}. Neither $a\Gamma^{(n)}$, nor $a\Gamma^{(n+1)}$ equal zero, so there exists some $b$ in $\Gamma$ such that $ab\Gamma^{(n)}$ is not zero. We claim that $ab$ does not belong to $abJ$. Otherwise, there is some $c$ in $J$ with $ab=abc$. So $-c$ belongs to $J$ too, and there exists some $d$ such that $d-c-cd$ equals zero. It follows that the equality $abd=abcd=ab(d-c)$ hold, hence $abc$ is zero. So $ab$ is zero, a contradiction. Thus, one has $abJ\cap \Gamma<aJ\cap \Gamma$, a contradiction.\end{proof}



\begin{rmq}Let $J$ be the radical of a small ring. By Proposition~\ref{rad}, $J$ is nil. A compactness argument ensures that its nil exponent must be bounded. Call $n$ the nilexponent of $J$. Dubnov-Ivanov-Nagata-Higman's Theorem states that \emph{"a nil algebra of exponent $n$ over a field of characteristic either zero or a prime number greater than $n$ is nilpotent of class at most $2^n-1$"}. See \cite[Jacobson il me semble]{Jac} for a proof of that. The proof extends naturally to the ring context : " ". By \cite{WQ}, the additive group of $J$ is the direct sum of two subgroups $J_0$ and $J_m$, where $J_0$ is additively divisible and $J_m$ has additive exponent $m$. One can easily see that $J_0$ and $J_m$ are ideals, of characteristic zero and $m$ respectively. After Dubnov-Ivanov-Nagata-Higman's Theorem, $J_0$ is nilpotent. Should $m$ be a prime number greater than $n$, $J_m$ (hence $J$) would be nilpotent too. Is the radical of a small ring nilpotent? 
\end{rmq}

\subsection{Abelian ring with zero radical}

Let $R$ be a weakly small abelian ring with zero radical. Note that $R$ has no nilpotent element, except zero.

\begin{lem}\label{dcl}Let $\Gamma$ be a finitely generated algebraic closure in $R$.\begin{enumerate}\item If $\Gamma$ is non-trivial, it has an idempotent element $e$.
\item If $e$ is the only idempotent element in $\Gamma$, then $e\Gamma$ is a field.\end{enumerate}\end{lem}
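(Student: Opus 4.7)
For~(1), the plan is to extract a canonical idempotent from the Fitting-type decomposition supplied by Corollary~\ref{rdecom}. Pick any nonzero $r\in\Gamma$; since $R$ has no nonzero nilpotent, $r^n\neq 0$ for every~$n$. Choose $n$ large enough so that $R=r^n R\oplus Ann_R(r^n)$ (Corollary~\ref{rdecom}) and $Ann_R(r^{2n})=Ann_R(r^n)$ (Proposition~\ref{Ch1}). Multiplication by $r^n$ is then a bijection of $J:=r^n R$: surjectivity follows from the decomposition $y=r^n a+b$, which gives $r^n y=r^{2n}a=r^n(r^n a)$ with $r^n a\in J$; injectivity on $J$ comes from $r^n R\cap Ann_R(r^n)=0$ together with $Ann_R(r^{2n})=Ann_R(r^n)$. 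Let $e\in J$ be the unique preimage of $r^n$ under this bijection, so that $r^n e=r^n$. For every $x=r^n y\in J$, commutativity yields $xe=(r^n y)e=y\cdot r^n e=y\cdot r^n=x$, so $e$ is the multiplicative identity of $J$; in particular $e^2=e$, and $e\neq 0$ since $r^n e=r^n\neq 0$.

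To conclude~(1), I observe that $e$ is the unique solution in $R$ of the first-order formula $\exists y\,(x=r^n y)\wedge r^n x=r^n$, whose parameter $r$ lies in $\Gamma$; hence $e\in dcl(r)\subseteq\Gamma$. For~(2), given a nonzero $r\in e\Gamma$, applying the construction of~(1) to $r$ produces a nonzero idempotent $e'\in\Gamma$ with $e'\in r^n R$; the hypothesis that $\Gamma$ contains no other idempotent forces $e'=e$, so $e=r^n s$ for some $s\in R$. The assumption $r\in e\Gamma$ yields $er=r$, making $eR$ a unital subring with unit $e$; a straightforward manipulation of the equality $e=r^n s$ then produces $u\in eR$ with $ru=e$. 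Standard uniqueness of right-inverses in a unital ring identifies $u$ as the unique solution in $R$ of a first-order formula with parameters in $\{r,e\}\subseteq\Gamma$, so $u\in dcl(r,e)\subseteq\Gamma$ and $u=eu\in e\Gamma$. Every nonzero element of $e\Gamma$ is therefore invertible, so $e\Gamma$ is a field.

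The principal obstacle is that Corollary~\ref{rdecom} yields its decomposition in $R$ rather than in $\Gamma$, so the idempotent of~(1) and the inverse of~(2) are a priori elements of $R$ only. The remedy in both cases is to identify them as \emph{unique} solutions in $R$ of explicit first-order formulas whose parameters already lie in $\Gamma$, which places them in $dcl$ and hence in $\Gamma$.
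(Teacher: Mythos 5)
Your proof is correct and follows essentially the same route as the paper: both extract an idempotent from the Fitting decomposition $R = r^nR \oplus Ann_R(r^n)$ of Corollary~\ref{rdecom}, and both place the idempotent and the inverse inside $\Gamma$ via uniqueness of a definable witness (the paper writes $r^n = r^{3n}b + a$ and takes $r^{2n}b$; you characterise $e$ directly as the unique preimage of $r^n$ under the bijection $\cdot r^n$ on $r^nR$). Your version is slightly cleaner in spelling out the $dcl$-argument that the constructed elements actually lie in $\Gamma$, but the decomposition, the key corollary, and the overall strategy coincide with the paper's.
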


\begin{proof}$(1)$ Suppose that $\Gamma$ is non zero, and let $r$ be in $\Gamma\setminus\{0\}$. By Corollary~\ref{rdecom}, there is a natural number~$n$ such that $R$ equals $r^nR\oplus Ann(R^n)$. In fact, one can show that $r^nR$ equals $r^{2n}R$ so the ring $R$ equals $r^{3n}R\oplus Ann(R^n)$. So, there is some $a$ in $Ann(r^n)$ and some $b$ such that $r^n=r^{3n}b+a$ holds. As the sum is direct, note that $a$ and $r^{3n}b$ must be in $\Gamma$. This yields $r^{2n}=r^{4n}b$. As $r$ cannot be nilpotent, $r^{2n}b$ is a non zero, and idempotent. As $r^{3n}b\in \Gamma$, it easily follows that $r^{2n}b\in\Gamma$. $(2)$ If $e$ is the only nilpotent element in $\Gamma$, one must have $r^{2n}b=e$, so $r$ is invertible in $eR$. As the inverse of $r$ is unique, it must be algebraic over $r$ and $e$. So $e\Gamma$ is a field.\end{proof}



\begin{prop}\label{wsar}In $R$, any finitely generated algebraic closure $\Gamma$ is isomorphic to a finite Cartesian product of fields.\end{prop}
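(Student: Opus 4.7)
The plan is to decompose $\Gamma$ along a maximal family of pairwise orthogonal idempotents and identify each piece as a field via Lemma~\ref{dcl}. First I would show that $\Gamma$ contains no infinite family of pairwise orthogonal idempotents. Given such a family $e_1,e_2,\dots$, the partial sums $\gamma_n=e_1+\cdots+e_n$ satisfy $e_{n+1}\gamma_n=0$ (by orthogonality) while $e_{n+1}\gamma_{n+1}=e_{n+1}\ne 0$, so $e_{n+1}\in Ann_R(\gamma_n)\setminus Ann_R(\gamma_{n+1})$. Hence the chain of $\Gamma$-definable additive subgroups $(Ann_R(\gamma_n))_n$ of $R$ has strictly descending trace over $\Gamma$, contradicting Theorem~\ref{dccm} applied to the additive group of $R$ with $\Gamma$ in the role of $acl(\emptyset)$.

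By Lemma~\ref{dcl}(1) and the previous step, I may then fix a maximal family $e_1,\dots,e_n$ of pairwise orthogonal idempotents in $\Gamma$. The sum $e=e_1+\cdots+e_n$ is itself an idempotent, and commutativity of $R$ yields the decomposition into orthogonal ideals $\Gamma=e\Gamma\oplus Ann_\Gamma(e)$ together with $e\Gamma=\bigoplus_i e_i\Gamma$. To see that $Ann_\Gamma(e)=0$, I would apply the construction inside the proof of Lemma~\ref{dcl}(1) to any non-zero $r\in Ann_\Gamma(e)$: it produces an idempotent of $\Gamma$ of the form $r^{2m}b$, and since $re=0$ and $R$ is commutative one has $r^{2m}b\cdot e=0$, so this new idempotent is orthogonal to $e$ and hence to each $e_i$, contradicting maximality.

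It remains to check that each factor $e_i\Gamma$ is a field. Maximality forces $e_i$ to be the unique idempotent of $e_i\Gamma$: any other idempotent $f\in e_i\Gamma$ with $f\ne e_i$ would split $e_i$ into two non-zero orthogonal idempotents $f$ and $e_i-f$, still orthogonal to the $e_j$ for $j\ne i$, yielding a family of $n+1$ pairwise orthogonal idempotents. Mimicking the proof of Lemma~\ref{dcl}(2), I then take non-zero $r\in e_i\Gamma$ and apply Lemma~\ref{dcl}(1) inside $\Gamma$ to obtain the idempotent $r^{2m}b$; since $r=e_ir$ forces $r^{2m}=e_ir^{2m}$, this idempotent lies in $e_i\Gamma$ and therefore equals $e_i$. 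Then $r\cdot r^{2m-1}b=e_i$, so $r$ is invertible in the commutative ring $e_i\Gamma$ (whose unit is $e_i$), and $e_i\Gamma$ is a field. The Cartesian decomposition $\Gamma\cong\prod_i e_i\Gamma$ follows at once from the direct sum of orthogonal ideals.

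The main obstacle is verifying that the idempotent produced by Lemma~\ref{dcl}(1) lands in the relevant sub-ring — $Ann_\Gamma(e)$ in the second step, $e_i\Gamma$ in the third — rather than drifting elsewhere in $\Gamma$. Both reductions rely crucially on commutativity of $R$ together with the explicit polynomial shape $r^{2m}b$ of the idempotent, which interacts compatibly with the annihilator and ideal conditions carving out those sub-rings.
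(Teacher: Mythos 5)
Your overall plan — build a family of pairwise orthogonal idempotents, decompose $\Gamma$ as the direct sum of the $e_i\Gamma$ plus the annihilator, kill the annihilator, and identify each $e_i\Gamma$ as a field via the uniqueness of its idempotent — is exactly the paper's strategy, and your first two paragraphs (the descending chain $Ann_R(\gamma_n)$, and the careful check that the idempotent produced from $r\in Ann_\Gamma(e)$ lands back in $Ann_\Gamma(e)$) are correct and even slightly more explicit than the paper's.

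The gap is in the final step. You establish that there is no infinite family of pairwise orthogonal idempotents, which gives you a \emph{maximal} family in the sense of ``cannot be properly extended''. But your argument that each $e_i\Gamma$ has a unique idempotent needs the family to be of \emph{maximum size}: if $f\in e_i\Gamma$ is another idempotent, replacing $e_i$ by $f$ and $e_i-f$ produces a family of size $n+1$, but this new family is not a superset of the old one, so it does not contradict unextendability. (Concretely, in $R=F^3$ the family $\{(1,1,0),(0,0,1)\}$ is unextendable, yet $(1,1,0)R$ contains the idempotent $(1,0,0)\ne(1,1,0)$; the split $\{(1,0,0),(0,1,0),(0,0,1)\}$ is a different, larger family.) Nothing in your argument shows that a maximum-size family exists. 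The paper sidesteps this by a choice you omit: at each stage $e_i$ is taken so that $e_iR\cap\Gamma$ is \emph{minimal} among all candidates (possible by Theorem~\ref{dccm}). Then another idempotent $e_ia\in e_i\Gamma$ would give $e_ia\Gamma<e_i\Gamma$, directly contradicting the minimality of the chosen $e_i$ — no appeal to family size is needed. You should either incorporate this minimality condition into the construction, or give a separate argument (say, via a refinement tree and K\"onig's lemma feeding into Theorem~\ref{dccm}) that the splitting process terminates.
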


\begin{proof}Suppose that $\Gamma$ is non zero. Then $\Gamma$ has a idempotent $e_1$ by Lemma~\ref{dcl}(1). We may choose it such that the additive group $e_1R\cap\Gamma$ is minimal among the groups $\{\gamma R\cap\Gamma:\gamma^2=\gamma\text{ and }\gamma\in\Gamma\setminus\{0\}\}$ by Theorem~\ref{dccm}. If $e_1$ has an orthogonal idempotent in $\Gamma$, we choose one, say $e_2$, such that $e_2R\cap\Gamma$ is minimal. If $e_1,e_2$ have a common orthogonal idempotent in $\Gamma$, we pick $e_3$ among the ones such that $e_3R\cap\Gamma$ is minimal. We claim that this process must stop. Otherwise, there would be an infinite chain $e_1,e_2,\dots$ of pairwise orthogonal idempotents. The chain $(Ann_\Gamma(e_1+\cdots+e_n))_{n\geq 1}$ would be strictly decreasing, a contradiction with the weakly small chain condition~\ref{dccm}. So let $e_1,\dots,e_n$ be such a family of pairwise orthogonal idempotents, of maximal size~$n$. Corollary~\ref{rdecom} yields  $$A=(e_1+e_2+\cdots+e_n)A\oplus Ann(e_1+\cdots+ e_n)$$ As the $e_i$ are pairwise orthogonal, one has
$$A=e_1A\oplus e_2A\oplus\cdots \oplus e_nA\oplus Ann(e_1+\cdots+ e_n)$$
Note that one has $(e_iR)\cap \Gamma=e_i\Gamma$ for every $i$. It follows that
$$\Gamma=e_1\Gamma\oplus e_2\Gamma\oplus\cdots \oplus e_n\Gamma\oplus Ann_\Gamma(e_1+\cdots +e_n)$$ By maximality of~$n$, the ring $Ann_\Gamma(e_1+\cdots+e_n)$ contains no idempotent. It must be zero by Lemma~\ref{dcl}(1). If for some $i$ the ring $e_i\Gamma$ should possess another idempotent $e_ia\neq e_i$, as $e_ia-e_i$ and $e_ia$ are pairwise orthogonal idempotents, one would have $e_ia\Gamma<e_i\Gamma$, a contradiction with the choice of $e_i$. So every $e_i\Gamma$ is a field by Lemma~\ref{dcl}(2).\end{proof}

\begin{rmq}Contrary to omega-stable rings, a small abelian ring with zero Jacobson radical need not necessary be isomorphic to a finite Cartesian product of fields. For instance, an infinite atomless boolean ring is $\aleph_0$-categorical \cite[Poizat, Théorème 6.21]{Poi1}. 
\end{rmq}




\subsection{Non abelian rings}Let $R$ be any weakly small ring.

Pb interessant a resoudre : si $R$ sans radical, sous quelle condition $\Gamma$ est-il sans radical?

\begin{lem}\label{lpa}Let $\Gamma$ be a finitely generated algebraic closure in~$R$.\begin{enumerate}
\item For every $r$ in $\Gamma$, there is some $a$ in $\Gamma$ and $n$ in $\mathbf N$ such that $r^{n}=r^{n}ar^{n}$.
\item If $\Gamma$ is not nil, it has at least one idempotent.\end{enumerate}\end{lem}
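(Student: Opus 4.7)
The plan is to combine Proposition~\ref{Ch2} with the decomposition of Corollary~\ref{rdecom} applied internally to $\Gamma$, together with the local descending chain condition of Theorem~\ref{dccm}. The key observations are that $\Gamma$, as a subset of a weakly small structure, inherits weak smallness, and that $r^{n}\Gamma r^{n}$ is an $r$-definable additive subgroup of $(\Gamma,+)$, with $r^{n+1}\Gamma r^{n+1}\subseteq r^{n}\Gamma r^{n}$ because $r^{n+1}sr^{n+1} = r^{n}(rsr)r^{n}$ for any $s\in\Gamma$.

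First I would apply Proposition~\ref{Ch2} inside $\Gamma$ to stabilize the ascending chains $(Ann_{\Gamma}(r^{n}))_{n}$ and $(Ann^{\Gamma}(r^{n}))_{n}$, and repeat the proof of Corollary~\ref{rdecom} inside $\Gamma$ to obtain, for $n$ large enough, the decomposition $\Gamma = r^{n}\Gamma \oplus Ann^{\Gamma}(r^{n})$. Writing $r^{n} = r^{n}a + b$ with $a\in\Gamma$ and $r^{n}b=0$, and then multiplying on the right by $r^{n}$, one gets $r^{2n} = r^{n}ar^{n} + br^{n}$; since $r^{n}ar^{n}\in r^{n}\Gamma$ and $br^{n}\in Ann^{\Gamma}(r^{n})$ (as $r^{n}(br^{n})=(r^{n}b)r^{n}=0$), while $r^{2n}$ already lies in $r^{n}\Gamma$ with trivial projection on the annihilator component, uniqueness of the decomposition forces $r^{n}ar^{n} = r^{2n}$ and $br^{n}=0$. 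In particular, $r^{2n}\in r^{n}\Gamma r^{n}$.

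I would then apply Theorem~\ref{dccm} inside the weakly small group $(\Gamma,+)$ to the descending chain $(r^{n}\Gamma r^{n})_{n\geq 1}$: it must stabilize at some step $N$, so $r^{N}\Gamma r^{N} = r^{2N}\Gamma r^{2N}$. Combining with the first step, $r^{2N}$ lies in $r^{N}\Gamma r^{N} = r^{2N}\Gamma r^{2N}$, which is exactly to say that $r^{2N} = r^{2N}\, a\, r^{2N}$ for some $a\in\Gamma$, proving (1) with exponent $n=2N$.

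The main obstacle I anticipate is the very first step: verifying that the proof of Corollary~\ref{rdecom} transfers verbatim to the subring $\Gamma$ viewed as its own weakly small structure, since that proof invokes Lemma~\ref{fibresf} on definable maps of the ambient ring. One has to check that the internal injective-endomorphism-is-onto argument survives in $\Gamma$ using the inherited weak smallness. Once this is granted, the rest is a straightforward application of the decomposition (to place $r^{2n}$ inside $r^{n}\Gamma r^{n}$) and of the local dcc (to push $r^{2n}$ into $r^{2n}\Gamma r^{2n}$).
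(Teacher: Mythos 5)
Your steps (1) and (2) reproduce, correctly, the first half of the paper's argument: one one-sided decomposition $\Gamma=r^{n}\Gamma\oplus Ann^{\Gamma}(r^{n})$ (obtained from Corollary~\ref{rdecom} together with the observation that the two components of an element of $\Gamma$ are definable from that element and $r$, hence lie in $\Gamma$), yielding $r^{2n}=r^{n}ar^{n}$ with $a\in\Gamma$. The obstacle you flag there is the benign one. The genuine gap is in step (3), which you present as straightforward. Theorem~\ref{dccm} applies to the \emph{traces on $\Gamma$} of a descending chain of definable subgroups of the ambient ring, i.e.\ to $(r^{n}Rr^{n}\cap\Gamma)_{n}$; it says nothing about the chain of \emph{images} $(r^{n}\Gamma r^{n})_{n}$, which may be strictly smaller and is not a chain of traces. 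Your justification --- that $\Gamma$ ``inherits weak smallness'' and that one may therefore run Theorem~\ref{dccm} inside $\Gamma$ as a structure in its own right --- is exactly the claim the paper's whole ``locally $P$'' machinery is designed to avoid: weak smallness of the induced structure on a finitely generated algebraic closure is neither proved in the paper nor obvious (it is a statement about all types consistent with $Th(\Gamma)$, not about the countably many realised ones). And if you retreat to the legitimate chain $(r^{n}Rr^{n}\cap\Gamma)_{n}$, its stabilisation only gives $r^{2N}=r^{2N}cr^{2N}$ for some $c\in R$; the set of such witnesses is a coset of a definable subgroup and there is no reason for it to meet $\Gamma$, so you lose the requirement $a\in\Gamma$ of the statement.

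The paper closes the argument without any further chain condition: it invokes the \emph{symmetric} decomposition $\Gamma=\Gamma r^{n}\oplus Ann_{\Gamma}(r^{n})$ as well, writes the witness of the first decomposition as $b=dr^{n}+f$ with $r^{n}f=0$ (in the paper's notation, $a^{n}=a^{2n}b$ and $b=da^{n}+f$), and substitutes to get $a^{2n}=a^{2n}da^{2n}$ in two lines. In effect, from $r^{n}\in r^{2n}\Gamma$ and $r^{n}\in\Gamma r^{2n}$ one gets $r^{2n}\in r^{2n}\Gamma r^{2n}$ purely algebraically; your single decomposition only yields the weaker, exponent-mismatched relation $r^{2n}=r^{n}ar^{n}$, which is why you are forced into the problematic step (3). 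Finally, your proposal does not address part (2) of the lemma at all; it does follow quickly from (1) (if $r$ is not nilpotent and $r^{m}=r^{m}ar^{m}$, then $r^{m}a$ is a non-zero idempotent since $(r^{m}a)^{2}=r^{m}ar^{m}a=r^{m}a$), but this should be said.
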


\begin{proof}$(1)$ As in the proof of Lemma~\ref{dcl}, for any $a$ in $\Gamma$, one can find some $b$ in $\Gamma$ such that $a^{n}=a^{2n}b$. By Corollary~\ref{rdecom} and symmetry, $\Gamma$ equals $\Gamma a^n\oplus Ann^\Gamma(a^n)$ so there are some $d$ in $\Gamma$ and $f$ in $Ann^\Gamma(a^n)$ with $b=da^n+f$. So $a^{2n}=a^{2n}da^{2n}$. $(2)$ If $\Gamma$ is not nil, one may choose $a$ non nilpotent, hence $a^{2n}d$ is non zero hence an idempotent element.\end{proof}

\begin{prop}\label{dern}For any finitely generated algebraic closure $\Gamma$ in $R$, the ring $\Gamma/J(\Gamma)$ is right artinian.\end{prop}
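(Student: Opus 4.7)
The plan is to establish that $\bar\Gamma := \Gamma/J(\Gamma)$ is semisimple Artinian by checking three conditions on it: semiprimitivity (immediate from the definition), $\pi$-regularity, and the absence of infinite orthogonal idempotent families. Right Artinianity then follows from the classical structure theorem for such rings.

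First I would use Lemma~\ref{lpa}(1) to observe that $\Gamma$ is $\pi$-regular: for every $r\in\Gamma$ the identity $r^n = r^n a r^n$ produces an idempotent $e = r^n a\in\Gamma$ with $r^n\Gamma = e\Gamma$. Applied to an element $r\in J(\Gamma)$, this idempotent lies in $J(\Gamma)$ and must therefore be zero, whence $r^n = r^n a r^n = 0$. Hence $J(\Gamma)$ is nil. Since $\pi$-regularity passes trivially to quotients, $\bar\Gamma$ is $\pi$-regular as well.

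Next I would show that $\Gamma$ contains no infinite family of pairwise orthogonal idempotents. Suppose $(e_i)_{i\geq 1}$ were such a family and set $X_n = \{e_k : k\geq n\}\subseteq\Gamma$. The right annihilators $Ann^\Gamma(X_n) = \{x\in\Gamma : e_k x = 0 \text{ for all } k\geq n\}$ form an ascending chain since $X_n\supseteq X_{n+1}$, and orthogonality makes this chain strict: $e_n$ belongs to $Ann^\Gamma(X_{n+1})$ because $e_k e_n = 0$ for $k>n$, yet $e_n\notin Ann^\Gamma(X_n)$ since $e_n^2 = e_n\neq 0$. This contradicts Proposition~\ref{Ch2} (applied to right annihilators, as noted in the remark following it). Because $J(\Gamma)$ is nil, a standard inductive lifting of orthogonal idempotents transfers this finiteness property to $\bar\Gamma$.

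Finally I would invoke the classical fact that a semiprimitive $\pi$-regular ring with no infinite orthogonal family of idempotents is von Neumann regular and semisimple Artinian, as a maximal finite orthogonal family of primitive idempotents yields a Peirce decomposition into finitely many minimal right ideals. The main obstacle is the orthogonal lifting of idempotents from $\bar\Gamma$ to $\Gamma$ through the nil ideal $J(\Gamma)$: after lifting $\bar e_1,\dots,\bar e_i$ to pairwise orthogonal idempotents $e_1,\dots,e_i$ in $\Gamma$, one lifts $\bar e_{i+1}$ into the corner complementary to $e_1+\cdots+e_i$ by applying the $\pi$-regular construction to an appropriately conjugated approximate lift. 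This step is delicate because $\Gamma$ need not be unital, so one must work with the idempotents $e_1+\cdots+e_i$ as "local units" and use the nilness of $J(\Gamma)$ to guarantee that the resulting idempotent lifts remain nonzero and genuinely orthogonal.
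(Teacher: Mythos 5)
Your proposal takes a genuinely different route from the paper's. Instead of directly constructing a finite family of idempotents in $\Gamma$ and decomposing $\Gamma/J(\Gamma)$ by hand, you isolate abstract ring-theoretic properties — $\pi$-regularity via Lemma~\ref{lpa}(1), nilness of $J(\Gamma)$, and the finiteness of orthogonal idempotent families via an ascending chain of right annihilators and Proposition~\ref{Ch2} — and then appeal to a classical structure theorem. The first two steps are correct and in fact cleaner than the paper's argument: deriving that $J(\Gamma)$ is nil from $r^n = r^na r^n$ and idempotent-freeness of the radical is tidy, and your ascending-annihilator argument (with $X_n = \{e_k : k\geq n\}$ and $e_n\in Ann^\Gamma(X_{n+1})\setminus Ann^\Gamma(X_n)$) is a neat application of Proposition~\ref{Ch2}.

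The gap is in the final step, and it is a real one. The statement you invoke — that a semiprimitive $\pi$-regular ring with no infinite orthogonal idempotent family is semisimple Artinian — is correct for \emph{unital} rings (for instance: a maximal orthogonal family of primitive idempotents sums to $1$, each corner $\bar e_i\bar\Gamma\bar e_i$ is a division ring because every element is nilpotent or a unit and the nilpotents form an ideal killed by semiprimitivity, and then Brauer's lemma makes each $\bar e_i\bar\Gamma$ a minimal right ideal). But it is not an off-the-shelf theorem in the non-unital setting that $\Gamma$ lives in, and your final paragraph acknowledges the obstruction without resolving it. Concretely: a maximal family of \emph{mutually} orthogonal idempotents $\bar e_1,\dots,\bar e_n$ does not immediately give $\bar\Gamma = \bigoplus \bar e_i\bar\Gamma$, because an idempotent $\bar f$ in the right annihilator of $\bar e = \bar e_1+\cdots+\bar e_n$ satisfies $\bar e\bar f = 0$ but need not satisfy $\bar f\bar e = 0$, so it cannot simply be adjoined to the family to contradict maximality. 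This is precisely why the paper's own construction uses the deliberately asymmetric condition $e_ie_j = 0$ together with $e_je_i\in J(\Gamma)$, and then brings in Corollary~\ref{rdecom} to split $\Gamma$ against $Ann_\Gamma((e_1+\cdots+e_n)^m)$ and show the complementary piece is nil. Your sketch would need a comparable argument (either reproducing that annihilator analysis, or showing that $\bar\Gamma$ is in fact unital, or citing a non-unital version of the exchange-ring/semiperfect machinery) before the classical structure theory can be applied.
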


\begin{proof}Suppose that $\Gamma$ is non nil. Then it has an idempotent $e_1$ by Lemma~\ref{lpa}(2). We choose it so that the group $e_1\Gamma$ is minimal. If there is some idempotent $e_2$ with $e_1e_2=0$ and $e_2e_1\in J(\Gamma)$, we choose it such that $e_2\Gamma$ is minimal. Should this process not stop, there would be an infinite chain $e_1,e_2,\dots$ of such idempotent making the sequence $(Ann^\Gamma(e_1+\cdots+e_n))_{n\geq 1}$ strictly decreasing, a contradiction. So let be a maximal chain $e_1,\dots,e_n$ of idempotents in $\Gamma$ with $e_ie_j=0$ and $e_je_i\in J(\Gamma)$, and $e_i\Gamma$ minimal for every $i<j$. By Corollary~\ref{rdecom}, there is a natural number $m$ with $$\Gamma=((e_1+\cdots +e_n)^m)\Gamma\oplus Ann_\Gamma((e_1+\cdots +e_n)^m)$$
If $Ann_\Gamma(e_1+\cdots +e_n)$ has an idempotent $e$, then $ee_i=0$ for all $i$. Moreover, one can easily verify that $e_1e\Gamma$ is a nil right $\Gamma$-ideal. If follows from Fact~\ref{Jacob} that $e_1e\Gamma$ is included in $J(\Gamma)$. This contradicts the maximality of $n$, hence $Ann_\Gamma(e_1+\cdots+e_n)$ is a nil ideal by Lemma~\ref{lpa}(2). Note that $(e_1+\cdots +e_n)^m$ and $e_1+\cdots +e_n$ are equal modulo $J(\Gamma)$. This yields
$$\Gamma/J(\Gamma)=(\Gamma/J(\Gamma))e_1\oplus \cdots \oplus (\Gamma/J(\Gamma))e_n$$ 
Assume first that there be some natural number $i$, and some idempotent $e_ia$ in $e_i\Gamma$ such that $e_iae_i\neq e_i$. Then $e_i-e_iae_i$ and $e_iae_i$ are two orthogonal idempotents in $e_i\Gamma$, so $e_iae_i\Gamma<e_i\Gamma$, a contradiction with the choice of $e_i$. So for every $i$, every idempotent $e_ia$ in $e_i\Gamma$ verifies $e_iae_i=e_i$. It follows from Lemma~\ref{lpa}(1), that every element in $e_i\Gamma$ is either nil or $e_i$-invertible. Any right ideal of $e_i\Gamma$ must either equal $e_i\Gamma$, or be a nil ideal. Hence $\Gamma/J(\Gamma)$ is right Artinian.\end{proof}

\begin{cor}Let $\Gamma$ be a finitely generated algebraic closure in $R$. The ring $\Gamma/J(\Gamma)$ is isomorphic to a finite Cartesian product matrix rings over fields.\end{cor}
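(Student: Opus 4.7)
The plan is to combine Proposition~\ref{dern} with the Wedderburn--Artin Theorem (Fact~\ref{WA}). Proposition~\ref{dern} already supplies the hard content: it tells us that $\Gamma/J(\Gamma)$ is right Artinian. What remains is essentially automatic: one verifies that the Jacobson radical of $\Gamma/J(\Gamma)$ vanishes, and then invokes Fact~\ref{WA}.

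More precisely, first I would recall the standard fact that for any ring $S$, the quotient $S/J(S)$ is \emph{Jacobson semisimple}, i.e. $J(S/J(S))=0$. This can be seen directly from the description of $J$ given in Fact~\ref{Jacob} as the maximal right quasi-regular right ideal: if $x+J(S)$ lies in $J(S/J(S))$, then for every $y$ there is $z$ with $xy+z+xyz\in J(S)$, and a second quasi-inverse inside $J(S)$ lets one lift this to quasi-regularity of $xy$ itself, so that $x\in J(S)$. Applying this to $S=\Gamma$, we get $J\bigl(\Gamma/J(\Gamma)\bigr)=0$.

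Second, Proposition~\ref{dern} asserts that $\Gamma/J(\Gamma)$ satisfies the descending chain condition on right ideals, i.e. is right Artinian. Combining these two observations, $\Gamma/J(\Gamma)$ is a right Artinian ring with zero Jacobson radical, and Fact~\ref{WA} concludes that it is isomorphic to a finite Cartesian product of matrix rings over fields.

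There is essentially no obstacle here, since the real work was done in establishing right Artinianity in Proposition~\ref{dern}; the only point requiring a line of justification is the semiprimitivity of $\Gamma/J(\Gamma)$, which is a general ring-theoretic fact independent of any model-theoretic hypothesis. The corollary is therefore a direct application of the classical structure theorem once Proposition~\ref{dern} is in hand.
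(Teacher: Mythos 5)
Your proof is correct and follows essentially the same route as the paper: cite Proposition~\ref{dern} for right Artinianity of $\Gamma/J(\Gamma)$ and then invoke the Wedderburn--Artin theorem (Fact~\ref{WA}). You make explicit the (standard, implicit in the paper) point that $J(\Gamma/J(\Gamma))=0$, which the stated form of Fact~\ref{WA} requires; the paper instead adds an unneeded remark about left Artinianity, but the substance is identical.
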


\begin{proof}By Proposition~\ref{dern}, $\Gamma/J(\Gamma)$ is right Artinian. Symmetrically, it must be left Artinian. The conclusion follows from Wedderburn-Artin's Theorem~\ref{WA}.\end{proof}



\begin{thebibliography}{9}

\bibitem 
{Che} Gregory Cherlin, \textit{On $\aleph_0$-categorical nilrings}, Algebra Universalis \textbf{10}, 27--30, 1980.

\bibitem 
{Che2} Gregory Cherlin, \textit{On $\aleph_0$-categorical nilrings II}, The Journal of Symbolic Logic \textbf{45}, 2, 291--301, 1980.

\bibitem{CS} Gregory Cherlin et Saharon Shelah, \textit{Superstable fields and groups}, Annals of Mathematical Logic \textbf{18}, 3, 227--270, 1980.

\bibitem{CR} Gregory Cherlin and Joachim Reineke, \textit{Categoricity and stability of commutative rings}, Annals of Mathematical Logic \textbf{10}, 376--399, 1976.

\bibitem{Co} Paul M. Cohn, Skew fields constructions, Cambridge University Press, 1977.

\bibitem{Cohn} Richard M. Cohn, Difference Algebra, Interscience Publishers, 1965.

\bibitem{Her} Israel N. Herstein, Noncommutative Rings, The Mathematical Association of America, quatri\`eme \'edition, 1996.

\bibitem{HLPTW} Bernhard Herwig, James G. Loveys, Anand Pillay, Predag Tanovi\'{c} and Frank O. Wagner, \textit{Stable theories without dense forking chains}, Archive for Mathematical Logic \textbf{31}, 297--303, 1992.

\bibitem{HP} Ehud Hrushovski and Anand Pillay, \textit{Weakly Normal Groups}, Logic Colloquium 85, North Holland, 233--244, 1987.

\bibitem 
{Hrus} Ehud Hrushovski, \textit{On Superstable Fields with Automorphisms}, The model theory of groups, Notre Dame Math. Lectures \textbf{11},  186--191, 1989.

\bibitem{Jac} Nathan Jacobson, Structure of rings, American Mathematical Society, Providence, R.I., 1964.

\bibitem{KSW} Itay Kaplan, Thomas Scanlon and Frank O. Wagner \textit{Artin-Schreier extensions in dependent and simple fields}, to be published.

\bibitem{Lan} Serge Lang, Algebra, Springer, 2002.

\bibitem{Macf} Angus Macintyre, \textit{On $\omega_1$-categorical theories of fields}, Fundamenta Mathematicae \textbf{71}, 1, 1--25, 1971.

\bibitem
{Mil} C\'edric Milliet, \textit{Small Skew fields}, Mathematical Logic Quarterly \textbf{53}, 86--90, 1, 2007.

\bibitem
{Mil1} C\'edric Milliet, \textit{On properties of (weakly) small groups}, preprint,
\url{http://www.logique.jussieu.fr/modnet/Publications/Preprint.%20server/papers/226/}

\bibitem
{Pil} Anand Pillay, \textit{Countable models of $1$-based theories}, Archive for Mathematical Logic \textbf{31}, 163--169, 1992.

\bibitem 
{PSW} Anand Pillay, Thomas Scanlon and Frank O. Wagner, \textit{Supersimple fields and division rings}, Ma\-the\-ma\-tical Research Letters \textbf{5}, 473--483, 1998.

\bibitem 
{Poi1} Bruno Poizat, Cours de th\'eorie des mod\`eles, Nur al-Mantiq wal-Ma'rifah, Villeurbanne, 1985.

\bibitem 
{Poi2} Bruno Poizat, \textit{Groupes Stables}, Nur Al-Mantiq Wal-Ma'rifah, 1987.

\bibitem 
{dminimal} Bruno Poizat, \textit{Quelques tentatives de d\'efinir une notion g\'en\'erale de groupes de et corps de dimension un et de d\'eterminer leurs propri\'et\'es alg\'ebriques}, Confluentes Mathematici \textbf{1}, 1, 111--122, 2009.

\bibitem 
{W2} Frank O. Wagner, \textit{Small stable groups and generics}, The Journal of Symbolic Logic \textbf{56}, 1026--1037, 1991.

\bibitem 
{WQ} Frank O. Wagner, \textit{Quasi-endomorphisms in small stable groups}, The Journal of Symbolic Logic \textbf{58}, 1044--1051, 1993.

\bibitem 
{WStable} Frank O. Wagner, Stable groups, Cambridge University Press, 1997.

\bibitem 
{W1} Frank O. Wagner, \textit{Small fields}, The Journal of Symbolic Logic \textbf{63}, 3, 995--1002, 1998.

\bibitem 
{WM} Frank O. Wagner, \textit{Minimal fields}, The Journal of Symbolic Logic \textbf{65}, 4, 1833--1835, 2000.

\bibitem{We} Volker Weispfenning, \textit{Quantifier elimination for abelian structures}, Heildelberg, preprint, 1983.

\end{thebibliography}
\end{document}